\theoremstyle{plain}
\newtheorem{theo}{Theorem}[section]
\newtheorem{prop}[theo]{Proposition}
\newtheorem{pp}[theo]{P}
\newtheorem{defi}[theo]{Definition}
\newtheorem{cor}[theo]{Corollary}
\newtheorem{lem}[theo]{Lemma}
\def\R{\mathbb{R}}
\def\N{\mathbb{N}}
\def\H{\mathbb{H}}
\def\C{\mathbb{C}}
\def\E{{\bf E}}
\def\P{{\bf P}}
\def\D{\mathbb{D}}
\def\U{\mathcal{U}}
\def\v2{\vskip2mm}
\def\n{\noindent}
\renewcommand{\Im}{{\rm Im}}
\def\hcap{{\rm hcap}}
\def\rad{{\rm rad}}
\def\diam{{\rm diam}}
\def\dist{{\rm dist}}
\def\({(\!(}
\def\){)\!)}
\def\a{\alpha}
\def\e{\varepsilon}
\def\de{\delta}
\def\ga{\gamma}
\def\k{\kappa}
\def\la{\lambda}
\def\Om{\Omega}
\def\v2{\vskip2mm}
\def\n{\noindent}
\def\1{{\bf 1}}
\def\n{\noindent}
\def\beq{\begin{eqnarray*}}
\def\eeq{\end{eqnarray*}}
\def\beqn{\begin{equation}}
\def\eeqn{\end{equation}}
\begin{document}
\title{Convergence of loop erased random walks on a planar graph to a chordal SLE(2) curve}
\author{Hiroyuki Suzuki }
\date{}
\maketitle

\begin{abstract}
In this paper we consider  the \lq natural' random walk  on a planar graph 
and  scale it by a small positive number $\delta$. Given a simply connected domain $D$ and its two boundary points $a$ and $b$,
we start the scaled walk at a vertex of the graph nearby $a$ and condition it on its exiting $D$ through a vertex nearby $b$,
and prove that the loop erasure of the conditioned walk converges, as $\de\downarrow 0$, to the chordal SLE$_{2}$ that
connects $a$ and $b$ in $D$, provided that an invariance principle is valid for both  the random walk and the dual walk of it.
Our result is an extension of one due to Dapeng Zhan \cite{Z} where the problem is considered on the square lattice.
A convergence to the radial SLE$_{2}$ has been obtained by Lawler, Schramm and Werner \cite{LSW} for the square 
and triangular lattices and by Yadin and Yehudayoff \cite{YY} for a wide class of planar graphs.
Our proof, though an adaptation of that of \cite{LSW} and  \cite{YY},
involves some new ingredients that arise  from two sources: one  for dealing with a martingale observable that is different from
that used in \cite{LSW} and  \cite{YY} and the other for estimating the harmonic measures of the random walk started 
at a boundary point of a domain.
\footnote[0]{MSC2010 subject classifications. 60F17, 60J67, 82B41.
 
~~ Key words and phrases, Loop erased random walk, Schramm-Loewner evolution.}
\end{abstract}


\section{Introduction}   

The Schramm-Loewner evolutions driven by Brownian motion $\sqrt \kappa B(t)$ of variance $\kappa$,
abbreviated  as SLE$_\kappa$,
introduced by Oded Schramm \cite{S},  have been studied from various points of view and are now recognized to  well describe 
the scaling limits of certain lattice models of both physical and mathematical  interest.
Lawler, Schramm and Werner  \cite{LSW} have proved that the scaling limit of a  loop erased  random walk 
(or loop erasure (for the definition, see p.9)  of random walk, abbreviated as LERW) 
on either of the square and triangular lattices is the radial SLE$_2$.
Dapeng Zhan \cite{Z} have studied  LERW's on the square lattice but  in a multiply connected domain 
and derived the convergence of them.
In the case of a simply connected domain in particular, he  has proved the convergence to the chordal SLE$_2$.
Yadin and Yehudayoff \cite{YY} extend the result of   \cite{LSW},  the convergence of LERW to a radial SLE to that
for the natural random walks on planar graphs under  a natural setting of the problem. In this paper we consider the LERW
in a similar setting to \cite{YY} and show that LERW conditioned to connecting two boundary points
in a simply connected domain converges to a chordal SLE$_2$ curve. 

Here we state our result in an  informal way by using the terminology familiar in the theory of SLE  of which we shall  give a brief
exposition  in the next section. Let $V$ be the set of vertices of a planar graph on which a random walk (of discrete time)  is
defined and supposed to satisfy invariance principle in the sense that the linear interpolation of its space-scaled trajectory
converges to that of Brownian motion (in a topology where two curves are identified if they agree by some change of time
parameter).
For each $\delta >0$  we make the scale change of the space by $\delta$ : 
$V_\delta = \{\delta v: v\in V\}$, the set of scaled lattice points and
accordingly we make the $\delta$-scaling of our random walk so that  it moves on $V_\delta$.
Given a simply connected bounded domain $D$ and two distinct boundary points $a$ and $b$ of it,
let  $\gamma_\delta$ denote  the loop erasure of the  random walk scaled by $\delta$ 
that  starts a vertex $a_\delta$ of $V_\delta$ nearby $a$ and is conditioned to exit $D\cap  V_\delta$ 
through a vertex $b_\delta$  nearby  $b$  so that $\gamma_\delta$ is a random self-avoiding path on $D\cap V_\delta$
connecting $a_\delta$ and $b_\delta$,  which may be regarded as a \lq path'  in the planar graph.
We prove that the polygonal curve given  by linearly interpolating  
$\gamma_\delta$ converges to the chordal  SLE$_2$ curve connecting  $a$ and $b$ in  $D$ 
under a certain natural assumption on  $D$, the pair $a,b$, the planar graph and the random walk
(Theorem \ref{main2}).

For obtaining the result as stated above  we first  prove the convergence of the driving function of the loop erasure 
(Theorem \ref{main1}).
The proof is made in a way similar to \cite{LSW}, \cite{YY}  and \cite{ScSh}. 
In  \cite{ScSh}  the harmonic explorer, an evolution of a self avoiding random curve, 
is introduced and proved to converge to a chordal SLE$_{4}$ curve.
For the proof  a suitably chosen  martingale associated with the evolving random curve, called martingale observable,
plays a dominant role.  Not as in  \cite{ScSh}  we take  the  martingale observable  given by the ratio of  harmonic measures of 
a (random)  point relative to two points, the starting site of the walk and a suitably chosen site in a random domain 
defined by the loop erasure. 
This martingale is suggested in \cite{LSW} as a suitable candidate of a martingale observable 
but we need to normalize it in an appropriate way; moreover we must change the normalization as the loop erasure grows.
We apply the approximation result on the harmonic measure (Poisson kernel) proved in \cite{YY}.
To this end we need a delicate probability estimate, since our random walk starts at a boundary point and
we must deal with the conditional law given that it exits  $D\cap V_\delta$ through another boundary point. 
 
We deduce  the convergence of the loop erasure in a uniform topology from that of  the driving function 
under the hypothesis that not only the random walk but also the dual walk of it satisfy the invariance principle 
(Theorem \ref{main2}).
For the deduction we prove Proposition \ref{rev_LERW} asserting that the law of the time reversal of loop erasure of a walk 
agrees with the law of loop erasure of the time reversal  of the same walk. 

By the way, Proposition \ref{rev_LERW} provides an improvement of the convergence to a radial SLE$_2$.
In \cite{YY} the loop erasure is unti-chronological (loops are discarded in the reverse order).
The reason is that one wants to consider the loop erasure determined from the boundary.
Because the radial SLE$_2$ starts at a boundary point and stops at an inner point, 
and one wants to use a domain Markov property of the loop erasure.
In \cite{LSW}, they used the reversibility property of the loop erasure of a simple random walk proved by Lawler \cite{Law2}.
Proposition \ref{rev_LERW}  implies that the  convergence to SLE$_2$ in  the result of Yadin and Yehudayoff  is valid also 
for LERW with  the loops discarded in the chronological order instead of unti-chronological order.

The rest of the paper is organized as follows. In Sections 2 and 3 we give brief expositions  of the Loewner evolution and  
SLE, respectively,  and  the fundamental results  relevant to  the present issue or  used in the proof of our results.
In Section 4, consisting of three subsections,
we first give the framework  of our problem, the  planar graph as well as  the random walk on it,
and  bring in the LERW  together with  results associated with it (Subsection 4.1);
we then present a  martingale associated with the LERW (Subsection 4.2);
we also present the result of \cite{YY} which asserts an approximation of the harmonic measure of our random walk
by the classical Poisson kernel and a trivial lemma of the planar graph  (Subsection 4.3).
The statement and proof of  the main result of the present paper are given in Section 5.
The convergence of the loop erasure to SLE$_2$ curve with respect to the driving function is given in Subsection 5.1,
where a certain probability estimate proved in Section 6 is taken for granted.  
The convergence of the loop erasure to SLE$_2$ curve in a uniform topology  is given in Subsection 5.2,
where we prove the invariance of law of LERW  in (a double) time reversion.
In Section 6 we verify the aforementioned probability estimate which  plays an crucial role in the proof of our result,
a probability estimate of the scaled random walk on $D\cap  V_\delta$ starting at a boundary vertex  
under the conditional law given that it exists the domain  through another boundary vertex that is specified in advance.

\section{Loewner chain}
In this section, consisting of four subsections,
we give a brief exposition of the Loewner evolution and some results  relevant to  the present issue.
The standard results in the theory as given in Lawler's book \cite{Law1} are stated 
under the heading as {\bf P 2.k} ($k=1,2,\dots$).
 
\subsection{Conformal map and half-plane capacity}
Let $\H:=\{z \in \C : \Im\,z>0 \}$ be the upper half plane.
A bounded subset $A \subset \H$ is called a compact $\H$-hull 
if $A=\overline{A}\cap \H$ and $\H \setminus A$ is a simply connected domain.
Let $\mathcal{Q}$ denote the set of compact $\H$-hulls.
For any $A \in \mathcal{Q}$, there exists a unique conformal map 
$g_A:\H \setminus A \rightarrow \H$ satisfying $|g_A(z)-z|\rightarrow 0$ as 
$z\rightarrow \infty$. 
The half-plane capacity $\hcap(A)$ is defined by 
\[ \hcap(A):=\lim _{z \rightarrow \infty} z(g_A(z)-z). \]
Then, $g_A$ has the expansion
\[g_A(z)=z+\frac{ \hcap(A)}{z} +O\left( \frac{1}{|z|^2}\right) ,
 \quad z\rightarrow \infty. \]

The half-plane capacity has some nice properties, of which we need the following.

\begin{pp}(p69-71)\label{hcap}
If $r>0,x \in \R, A\in \mathcal{Q}$, then
\[
\hcap(rA)=r^2\hcap(A),\quad \hcap(A+x)=\hcap(A).
\]
If $A, B\in \mathcal{Q},A\subset B$, then
\[
\hcap(B)=\hcap(A)+\hcap(g_A(B\setminus A)).
\]
\end{pp}

\subsection{Chordal Loewner Chain in $\H$}
A chordal Loewner chain is the solution of a type of Loewner equation that describes the evolution of a curve growing 
from the boundary to the boundary of a domain in $\C$. 
In this section we consider the special case when the domain is $\H:=\{z \in \C : \Im\,  z>0 \}$,  the upper half plane
and the curve grows from the origin to the infinity in $\H$.
Suppose that $\gamma :[0,\infty) \rightarrow \overline {\H}$ is a simple curve 
with $\gamma (0)=0,\gamma (0,\infty) \subset \H$.
Then, for each  $t\geq 0$, there exists a unique conformal map 
$g_t:\H \setminus \gamma(0,t] \rightarrow \H $ 
satisfying $|g_t(z)-z| \rightarrow 0$ as $z \rightarrow \infty $. It is noted that  $g_t$ can be continuously extended 
to the (two sided) boundary of  $\H\setminus \gamma (0,t]$ along $\gamma (0,t]$.  
If $\gamma$ is parametrized by half plane capacity (i.e., if $\displaystyle\lim_{z \to \infty}z(g_t(z)-z)=2t$),
$g_t$ satisfies the following differential equation
\begin{equation} \label{cle}
\frac{\partial }{\partial t}g_t(z)  =\frac{2}{g_t(z)-U(t)},
\quad g_0(z)=z, 
\end{equation}
where $U(t)=g_t(\gamma(t))$ and $U(\cdot)$ is a $\R$-valued continuous function (see \cite{Law1}).
We call the equation (\ref{cle}) the chordal Loewner equation and $U(\cdot)$ the driving function.

Conversely, suppose that $U(\cdot) :[0,\infty)\rightarrow \R$, a continuous function, is given in advance,
for $z \in \H$,  solve the ordinary differential equation (\ref{cle}) to obtain the solution $g_t(z)$ up to the time
$T_z:=\sup\{t>0:|g_t(z)-U(t)|>0\}$ and put $K_t:=\{z \in \H:T_z\leq t\}$.
Then for $t>0$, $g_t(z)$ is a conformal map from $\H \setminus K_t$ to $\H$.
The family $(g_t)_{t \geq 0}$ describes the evolution of hulls $(K_t)_{t \geq 0}$ corresponding to $U(\cdot)$ and growing from the boundary to $\infty$.
Therefore, we have a one-to-one correspondence between $U(\cdot )$ and $(K_t)_{t \geq 0}$.
If $U(\cdot)$ is the driving function of a simple curve $\gamma $, we can recover $\gamma $ from $U(\cdot)$
by the formula $\gamma(t)=g_t^{-1}(U(t))$ and we can write $K_t=\gamma (0,t]$.
If $U(\cdot)$ is sufficiently nice, then $(K_t)_{t \geq 0}$ is generated by a curve $\gamma$ 
with $\gamma (0) \in \R, \lim_{t\to\infty} \gamma (t) =\infty$
(i.e., for any $t \geq 0$, $\H\setminus K_t$ is the unbounded component of $\H\setminus \gamma(0,t]$).
However, there exists a continuous function $U(\cdot)$ such that
$(K_t)_{t \geq 0}$ can not be generated by a curve.
There is known a sufficient condition for $U(\cdot)$ to drive a curve as given by  
\begin{pp} (p108) \label{gen_sim}
Suppose  for some $r < \sqrt{2}$ and all $s < t$,
\[|U(t)-U(s)| \leq r\sqrt{t-s} .\]
Then $(K_t)_{t \geq 0}$ is generated by a simple curve.
\end{pp}

The family  $g_t,  t\geq 0$ is called the  (chordal) Loewner chain generated by  a curve $\gamma$ or driven by a function $U(t)$.
In summary, a simple curve $\gamma$ brings out a Loewner chain, whereby it determines the driving function $U(t)$,
and conversely a continuous function $U(t)$ with appropriate regularity
generates a curve  through the Loewner chain driven by $U(t)$.


\begin{prop} (Lemma 2.1. in \cite{LSW}) \label{es_hull}
There exists a constant $C>0$ such that the following holds.
Let $K_t$ be the corresponding hull for a Loewner chain driven by a continuous  function $U(t)$.
Set
\[k(t):=\sqrt{t} +\sup \{|U(s)-U(0)|:0 \leq s \leq t \}.\]
Then, for any $t>0$,
\[C^{-1}k(t) \leq \diam (K_t) \leq C k(t). \]
\end{prop}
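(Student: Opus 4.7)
The plan is to prove the upper and lower bounds separately. By translation invariance I may assume $U(0)=0$, so that $k(t)=\sqrt{t}+M$ with $M:=\sup_{0\le s\le t}|U(s)|$; note that $0\in \overline{K_t}\cap\R$ since the chain starts at the origin, so any bound of the form $K_t\subset \overline{B(0,R)}$ immediately yields $\diam(K_t)\le 2R$.

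For the upper bound $\diam(K_t)\le Ck(t)$, I would work directly with the chordal Loewner ODE \eqref{cle}. A point $z\in\H$ lies in $K_t$ exactly when $T_z\le t$, i.e., when $g_s(z)$ reaches the moving singularity $U(s)$ by time $t$. If $|z|$ is taken much larger than $\sqrt{t}+M$, then so long as $|g_s(z)|\ge 2M$ one has $|g_s(z)-U(s)|\ge |g_s(z)|/2$, so $|\partial_s g_s(z)|\le 4/|g_s(z)|$; a standard Gronwall-type estimate then keeps $|g_s(z)|$ bounded below by $|z|/2$ throughout $[0,t]$, preventing it from reaching the interval $[-M,M]\ni U(s)$. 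Hence $K_t\subset \overline{B(0,Ck(t))}$.

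For the lower bound I split $k(t)$ into its two pieces. The $\sqrt{t}$ part comes from Proposition \ref{hcap} together with the geometric estimate $\hcap(A)\le C\,\diam(A)^2$ for $A\in\mathcal{Q}$ with $0\in\overline{A}$, obtained by enclosing $A$ in a semi-disk of radius $\diam(A)$ centred at a point of $\overline{A}\cap\R$ and appealing to monotonicity and scaling of half-plane capacity; since the half-plane capacity parametrization gives $\hcap(K_t)=2t$, we conclude $\diam(K_t)\ge C^{-1}\sqrt{t}$. For the $M$ part, set $d=\diam(K_t)$, so that $K_s\subset K_t\subset \overline{B(0,d)}\cap\overline{\H}$ for every $s\le t$. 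The standard interior estimate $|g_{K_s}(z)-z|\le 3\,\rad(K_s)\le 3d$ for $z\in\H\setminus K_s$, extended by continuity to the boundary, forces the image $g_s(\partial K_s\cap\overline{\H})$ to lie in an interval of length $O(d)$ about the origin on $\R$; since $U(s)$ belongs to (the closure of) this image, $|U(s)|\le Cd$ uniformly in $s$, hence $M\le Cd$.

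The main obstacle will be the boundary estimate in the last step: the inequality $|g_{K_s}(z)-z|\le 3\,\rad(K_s)$ is an interior statement, and one must justify that $U(s)$ is realised as a limit of $g_s(z_n)$ for $z_n\in\H\setminus K_s$ approaching the appropriate boundary prime end so that the bound passes to the limit. This rests on the continuous extension of $g_s$ along $\partial K_s\cap\overline{\H}$ and the uniform control of the extended values; once this is in place, the three estimates combine to yield both directions of the proposition with a single constant.
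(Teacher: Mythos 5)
The paper does not prove this proposition; it cites it verbatim as Lemma~2.1 in \cite{LSW}, so there is no in-paper proof to compare against. Your argument follows the same route as the proof in \cite{LSW}: a Gronwall bound on $|g_s(z)|^2$ (note $\big|\tfrac{d}{ds}|g_s(z)|^2\big|\le 8$ once $|g_s(z)|\ge 2M$, giving $|g_s(z)|^2\ge|z|^2-8s$) for the inclusion $K_t\subset B(0,Ck(t))$; the normalization $\hcap(K_t)=2t$ together with $\hcap(A)\le\tfrac12\diam(A)^2$ for the $\sqrt t$ half of the lower bound; and the uniform estimate $|g_s(z)-z|\le 3\,\rad(K_s)$ for the $\sup|U|$ half. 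One small point of logical order you should make explicit: the claim ``$0\in\overline{K_t}$'' is itself a consequence of the upper bound (since $\overline{K_s}\cap\R\ne\emptyset$ and $K_s\subset B(0,Ck(s))$ with $k(s)\to 0$ as $s\downarrow 0$, so $0$ is a limit point of $\bigcup_{s<t}\overline{K_s}\subset\overline{K_t}$), so it must be proved \emph{after} the upper bound and then fed into the lower-bound argument; as written the proposal invokes it at the outset.

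The ``main obstacle'' you flag is more routine than you suggest. One does not need a continuous extension of $g_s$ to all of $\partial K_s$: the driving value is characterized by $\{U(s)\}=\bigcap_{\e>0}\overline{g_s(K_{s+\e}\setminus K_s)}$, so there are $z_n\in\H\setminus K_s$ with $z_n\to z^*\in\overline{K_s}$ and $g_s(z_n)\to U(s)$. Applying $|g_s(z_n)-z_n|\le 3\,\rad(K_s)\le 3\diam(K_t)$ and $|z^*|\le\diam(K_t)$ (using $0\in\overline{K_t}$) in the limit gives $|U(s)|\le 4\diam(K_t)$ with no appeal to boundary regularity of $g_s$. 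With that substitution your proof is complete and is essentially identical to the argument in \cite{LSW}.
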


\subsection{ Chordal Loewner chains in simply connected domains}
Let $D \subsetneq \C$ be a simply connected domain and $ \partial D$ be a set of prime ends.
If $D$ is a Jordan domain, then $\partial D$ may be identified with the topological boundary of $D$.
Let $a,b$ be distinct points on $\partial D$.
For $p \in D$, we define the inner radius of $D$ with respect to $p$,
\[ \rad _p (D):= \inf \{ |z-p| : z \not \in D \}. \]
Let $\phi : D \rightarrow \H$ be a conformal map with $\phi  (a)=0, \phi  (b)=\infty$.
Although $\phi$ is not unique, any other such map  can be written as $r\phi $ for some $r>0$.
For a simple curve  $\gamma :(0,T) \rightarrow D$  connecting  $a$ and $b$ so that  $\ga(0+)=a$ and $\ga(T-)=b$,
let $g_t$ be the Loewner chain generated by the curve  $\phi \circ \gamma : (0,T) \rightarrow \H$ and put
\[\phi_t = g_t\circ\phi , \quad t\in [0,\infty).\]
We reparametrize the curve $\gamma$ so that  the curve $\phi\circ\gamma$ in $\H$ is parametrized by half plane capacity. Denote by $(\ga(t))$ the function representing the curve in this parametrization,
  so that $2t= \hcap(\phi \circ \gamma [0,t] )$.   
The driving function $U(t)$ of the chain $g_t$ is then given by
\[U(t) = \phi_t(\gamma (t)).\]
The family of conformal maps  $\phi_t, t\geq 0$ may also be called a chordal Loewner chain (in $D$) with driving function $U(t)$.
For each $s>0$,  $\phi_s$  conformally maps $D(s):= D\setminus \ga(0, s]$ onto  $\H$  with $\phi_s(a_s) = U(s)$, 
$\phi_s(b)=\infty$, where $a_s =\ga(s)$ and  the curve $\ga^{(s)}(t) := \ga(s+t)$ connects $a_s$ and $b$ in $D(s)$.
On putting
\begin{equation} \label{U1}
 g^{(s)}_t  = g_{s+t} \circ g_s^{-1} \quad \text{and} \quad \phi_t^{(s)} = \phi_{s+t},
 \end{equation}
 substitution into  $U(s+t)=  \phi_{s+t}(\gamma (s+t))$ yields 
\begin{equation} \label{U2}
 U(t+s) = \phi_t^{(s)}(\ga^{(s)}(t)).
\end{equation}
It  follows from (\ref{U1}) that   $\phi_t^{(s)} = g_t^{(s)}\circ \phi_s$  
and  $g_t^{(s)} $ (and $\phi _t^{(s)}$) is the Loewner chain generated by  the curve  $\gamma^{(s)}$;
and also, from (\ref{U2})  that $U^{(s)}(t) := U(s+t)$ is the driving function of the chain  $\phi _t^{(s)}$ in $D(s)$. 
    
Define $p(t)\in D$ by
\[ \phi_t(p(t)) = U(t)+i. \]
$p(t)$ serves as  a reference point for the study of the conformal map $\phi _t$.
(See Proposition \ref{est_mart} and the remark advanced before Lemma \ref{fund_lem}.)
\begin{lem}\label{refpt}  
Let $T>1$ and $\epsilon >0$, and, given a  pair $ (D, \gamma)$, put $\tilde T:= \sup\{t\in [0,T]: |U(t)|<1/\epsilon\}$. 
Then there exists a constant $c(T,\epsilon)>0$, 
which may also depend on  $ (D, \gamma (0))$ but does not on $(\gamma(t), t>0)$, such that
 \[
 \rad_{p(t)}(D(t)) \geq c(T,\epsilon) \,\rad_{p(0)}(D) \quad  \text{for} \quad t<\tilde T.
 \]
\end{lem}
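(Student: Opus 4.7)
The plan is to reduce the lemma to an estimate of conformal derivatives and then exploit the Loewner ODE. By the Koebe $1/4$ (distortion) theorem applied to $\phi_t^{-1}:\H\to D(t)$ at the point $U(t)+i$ (which lies at Euclidean distance $1$ from $\partial\H$), one has $\rad_{p(t)}(D(t))\asymp 1/|\phi_t'(p(t))|$ with universal constants, and similarly at $t=0$. It therefore suffices to prove
\[|\phi_t'(p(t))|\le C(T,\epsilon,D,\gamma(0))\,|\phi'(p(0))|.\]
Writing $\phi_t=g_t\circ\phi$ and $q(t):=\phi(p(t))=g_t^{-1}(U(t)+i)$, this factors as
\[|\phi_t'(p(t))|=|g_t'(q(t))|\cdot|\phi'(p(t))|,\]
and the two factors can be handled separately.

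The core of the proof is an ODE bound on $|g_t'(q(t))|$ via the Loewner equation (\ref{cle}). Set $Y_r:=g_r(q(t))-U(r)$ for $r\in[0,t]$, so that $Y_t=i$. Since $\partial_r\Im Y_r=-2\Im Y_r/|Y_r|^2\le 0$, the function $\Im Y_r$ is nonincreasing, in particular $\Im Y_r\ge 1$ on $[0,t]$; and integrating $\partial_r\log\Im Y_r=-2/|Y_r|^2$ gives
\[\int_0^t \frac{2}{|Y_r|^2}\,dr=\log\Im q(t).\]
Differentiating the Loewner equation in $z$ at $z=q(t)$ yields $\partial_r\log g_r'(q(t))=-2/Y_r^2$. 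Using
\[-2\Re(Y_r^{-2})=\frac{2\bigl((\Im Y_r)^2-(\Re Y_r)^2\bigr)}{|Y_r|^4}\le\frac{2(\Im Y_r)^2}{|Y_r|^4}\le\frac{2}{|Y_r|^2}\]
(the last step because $(\Im Y_r)^2\le|Y_r|^2$), one concludes
\[\log|g_t'(q(t))|=-2\Re\!\int_0^t Y_r^{-2}\,dr\le\log\Im q(t),\]
i.e.\ $|g_t'(q(t))|\le\Im q(t)$. Recognizing $\log\Im Y_r$ as the right quantity to integrate is the key step, and this is where I expect the main obstacle to lie.

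It remains to bound $\Im q(t)$ and $|\phi'(p(t))|$. Since $U(0)=\phi(\gamma(0))=0$ and $|U(s)|<1/\epsilon$ for $s<\tilde T$, Proposition \ref{es_hull} yields $\diam K_t\le C(\sqrt{T}+1/\epsilon)=:R$ for $t<\tilde T$; combined with a standard estimate $|g_t(z)-z|\le C'R$ for $z\in\H\setminus K_t$ (see Lawler \cite{Law1}), this gives $|q(t)|\le|U(t)+i|+C'R\le C_1(T,\epsilon)$ and hence $\Im q(t)\le C_1$. Together with $\Im q(t)\ge 1$, this confines $q(t)$ to a compact subset $K^*\subset\H$ depending only on $T$ and $\epsilon$, so that $p(t)=\phi^{-1}(q(t))$ lies in the compact set $\phi^{-1}(K^*)\subset D$, on which the continuous nonvanishing derivative $|\phi'|$ is bounded above by some $C_2=C_2(D,\gamma(0),T,\epsilon)$. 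Assembling the estimates, $|\phi_t'(p(t))|=|g_t'(q(t))|\cdot|\phi'(p(t))|\le C_1 C_2$, which completes the proof. All remaining ingredients beyond the Loewner ODE bound—Koebe distortion, the hull diameter estimate of Proposition \ref{es_hull}, and continuity of $\phi'$ on compact subsets of $\H$—are standard.
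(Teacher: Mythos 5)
Your proof is correct, and it takes a genuinely different route from the paper's. The paper's argument is geometric on the $D$-side: it proves the claim $|\phi(p(t)) - \phi(\gamma(t'))| \ge 2^{-1} e^{-4\tilde T}$ by a Gronwall-type estimate on $y(s) = g_s(\phi(p(t))) - g_s(\phi(\gamma(t')))$, invokes a compactness result for $\{\phi(p(t)): t<\tilde T\}$ cited from \cite{SS}, and then pulls both facts back to $D$ via the Koebe distortion theorem, yielding $\dist(p(t), \gamma(0,t]) \ge c_1\, \dist(p(t),\partial D)$ and $\rad_{p(t)}(D) \ge c_0\,\rad_{p(0)}(D)$ separately. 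You instead translate $\rad_{p(t)}(D(t))$ directly into $1/|\phi_t'(p(t))|$ by Koebe, factor $\phi_t' = (g_t'\circ\phi)\cdot\phi'$, and control $|g_t'(q(t))|$ by an ODE comparison: the observation that $-2\Re(Y_r^{-2}) \le 2/|Y_r|^2 = -\partial_r\log\Im Y_r$ yields the sharp inequality $|g_t'(q(t))|\le\Im q(t)$ (sharp, e.g., for a vertical slit). You also obtain the compactness of $\{q(t)\}$ from Proposition \ref{es_hull} together with the standard hull estimate $|g_t(z)-z|\le 3\,\mathrm{rad}(K_t)$, making the proof self-contained rather than leaning on \cite{SS}. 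Both routes ultimately confine $q(t)=\phi(p(t))$ to a compact subset of $\H$ depending only on $T,\epsilon$ and then invoke Koebe distortion, but your derivative bound on $|g_t'|$ is a cleanly packaged replacement for the paper's distance estimate on $|y(s)|$. One small remark: at your last step, instead of bounding $|\phi'(p(t))|$ by continuity on a compact subset of $D$ (which makes $C_2$ depend on $D$), you could apply Koebe distortion to $\phi^{-1}$ on the compact set $K^*$ to bound the ratio $|\phi'(p(t))|/|\phi'(p(0))|$ by a constant depending only on $T,\epsilon$; this recovers the slightly sharper dependence the paper obtains, though the $D$-dependence in your version is explicitly permitted by the lemma.
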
  

\begin{proof}
We claim that 
\begin{equation} \label{claim1}
 |\phi(p(t)) - \phi(\gamma (t'))| \geq 2^{-1} e^{-4\tilde T} \quad \text{if} \quad  t'\leq t  <\tilde T.
\end{equation} 
Let $ t'\leq t  <\tilde T$ and  $z=\phi(\ga(t'))$,  and put
\[ y(s)= g_s(\phi(p(t)))- g_s(z),\qquad 0 \leq s \leq t.\] 
We prove  $|y(0)|=|\phi(p(t)) - z| \geq 2^{-1} e^{-4\tilde T}$. 
Recalling  that $\Im \, g_s (w)$ is decreasing in $s$  for any $w \in \H$, we see that 
\begin{equation} \label{Loew_eq1}
\Im\, g_s\circ\phi(p(t)) \geq \Im \, g_t\circ\phi(p(t)) =1 \quad \text{if} \quad s\leq t.
\end{equation}
Applying this with $s=0$ we have  $|y(0)|\geq 1/2$ if $\Im\, z\leq 1/2$. Let $\Im\, z> 1/2$ and 
define $\tau := \inf \{ t \geq 0 :\Im\, g_t (z) =1/2 \}$. Then $\tau< t'  \leq t$ (since $\Im\, g_{t'}(z)=0$) and
the Loewner equation together with  the inequality
(\ref{Loew_eq1}) shows 
\[
\left|\frac{d}{ds}y(s) \right| =  \frac{2|y(s)|}{| g_s\circ\phi(p(t))-U(s)|\cdot |g_s(z) -U(s)|} \leq  4|y(s)| \quad
 \text{for} \quad  0\leq s\leq \tau.
 \]
Hence $|y(s)|$ is absolutely continuous and satisfies
$\frac{d}{ds}|y(s)| \leq 4|y(s)|$, so that
\[ |y(\tau)| \leq |y(0)| e^{4\tau}.\]
Using  (\ref{Loew_eq1})   again we have  $1/2 \leq \Im\, y(\tau)$ so that  $1/2\leq  |y(0)|e^{4\tilde T}$,  which is the same as what we need to prove. Thus the claim (\ref{claim1}) is verified.

It is proved in \cite{SS} (the proof of Corollary 4.3) that 
the set $\{\phi(p(t)): t<\tilde T\}$ is included in a compact set of $\H$ depending only on $T$ and $\e$,
whence according to the Koebe distortion theorem $\rad_{p(t)}(D) \geq c_0(T,\epsilon)\,\rad_{p(0)}(D)$
for some  constant $c_0(T,\epsilon)>0$.
For the proof of the lemma  it therefore suffices to show that
\[
|p(t) -\gamma (t')| \geq c_1(T,\epsilon) \, \dist ( p(t), \partial D) \quad \text{for} \quad t' \leq t <\tilde T.
\]
To this end we may suppose $|p(t) -\gamma (t')| <2^{-1}\dist (p(t),\partial D)$.
Applying (\ref{claim1})  and  the distortion theorem in turn yields
\[
2^{-1}e^{-4\tilde T}\leq  |\phi(p(t))-\phi(\gamma (t')) | \leq 
16|p(t)-\ga(t')| \cdot \frac{\dist (\phi(p(t)),\R)}{\dist (p(t), \partial D)}. 
\]
We know that $\dist (\phi(p(t)),\R) \leq M$  for some constant $M=M(T, \epsilon )>0$
from the result of \cite{SS} mentioned above. Hence
$|p(t)-\ga(t')| \geq [e^{-4T}/32 M ]\,\dist (p(t), \partial D)$
as desired. 
\end{proof}

\subsection{Metrics on curves}
Let $\gamma, \gamma ^j(j=1,2,\dots)$ be curves which generate the Loewner chains.
Let $U(t)$ and $ U_j(t)$ be driving functions corresponding to $\gamma$ and $\gamma ^j$, respectively.
If $U_j(t)$ converges uniformly to $U(t)$  on any bounded interval,
then we will say that $\gamma ^j$ converges to $\gamma$ with respect to the driving function.

Next, we consider the metric on the space of unparametrized curves in $\C$. 
Let $f_1, f_2  :[0,1] \rightarrow \C$ be a continuous, non-locally constant functions.
If there exists a continuously increasing bijection  $\alpha :[0,1] \rightarrow [0,1]$ such that $f_2 =f_1 \circ \alpha$, 
then we will say $f_1$ and $f_2$ are the same up to reparametrization, denoted by $f_1\sim f_2$.
A unparametrized curve $\gamma$ is defined to be an equivalence class modulo $\sim$.
Let $d_*$ be the spherical metric on $\widehat{\C}$.
We define the metric on the space of unparametrized curves by
\begin{equation}\label{d_u}
 d_\U ( \gamma _1, \gamma _2):=\inf _\alpha \left [ \sup _{0 \leq t \leq 1} d_*(f_1(t), f_2\circ \alpha (t)) \right ] ,
\end{equation}
where $f_i$ any function in the equivalence class $\gamma _i$, and the infimum is taken over all reparametrizations
$\alpha$ which are continuously increasing bijections of $[0,1]$.
We often  denote by the same notation $\gamma$ a parametrized curve as well as an unparametrized curve.
Let us denote by $\gamma ^-$ the time reversal of $\gamma$.

The convergence with respect to the driving function is weaker than the convergence with respect to the metric $d_\U$.
We will consider a sufficient condition  for the convergence with respect to the metric $d_\U$
when we have the convergence with respect to the driving function.
Let $D \subsetneq \C$ be a simply connected domain and $ \partial D$ be the set of prime ends of $D$.
Let $a, b \in \partial D$ be distinct points.
Let $\phi  : D\rightarrow \H$ be a conformal map with $\phi  (a)=0, \phi  (b)=\infty$.
Let $\phi ^- : D\rightarrow \H$ be a conformal map with $\phi ^- (b)=0, \phi  ^-(a)=\infty$.

\begin{theo} (Theorem 1.2 in \cite{SS})\label{suffi_conv}
Let $\{ \gamma ^j \}$ be a sequence of simple curves travelling from $a$ to $b$ in $D$.
Suppose that there exists simple curves $\gamma$ and $\eta$ such that
$\phi \circ \gamma ^j$ converges to $\phi \circ \gamma$ with respect to the driving function
and $\phi ^- \circ \gamma ^{j-}$ converges to $\phi ^- \circ \eta$ with respect to the driving function.
Then $\gamma ^- =\eta$ and $\gamma ^j$ converges to $\gamma $ with respect to the metric $d_\U$.
\end{theo}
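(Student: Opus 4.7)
The plan is to reduce to the upper half-plane via $\phi$ and $\phi^-$ and to exploit the fact that uniform convergence of driving functions on compact intervals implies uniform curve convergence in the half-plane-capacity parametrization. Applied in the forward direction, this gives uniform control of $\ga^j$ on any initial arc terminating before $b$; applied in the reverse direction, it gives uniform control on any final arc starting after $a$. The two controls together upgrade the driving-function convergence to convergence in the metric $d_\U$.

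First I would transfer to $\H$ and write $\tilde\ga^j = \phi \circ \ga^j$, $\tilde\ga = \phi \circ \ga$, parametrized by half-plane capacity. Fix $T > 0$. Uniform convergence $U^j \to U$ on $[0,T]$ together with the Loewner equation~(\ref{cle}) yields local uniform convergence of the conformal maps $g_t^j$ and $(g_t^j)^{-1}$ on $\H$, which is equivalent to Carath\'eodory convergence of the hulls $K_t^j$ to $K_t = \tilde\ga(0,t]$. Proposition~\ref{es_hull}, applied to incremental hulls, furnishes equicontinuity of $\tilde\ga^j$ on $[0,T]$, uniformly in $j$. An Arzel\`a--Ascoli extraction, combined with the fact that any subsequential limit must trace out $K_t$ at time $t$ and that $\tilde\ga$ is simple, forces $\tilde\ga^j \to \tilde\ga$ uniformly on $[0,T]$. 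Pulling back by $\phi^{-1}$, which is continuous on compact subsets of $\H$, gives $\ga^j \to \ga$ uniformly on $[0,T]$ as curves in $D$.

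Next I would apply the identical argument to $\phi^- \circ \ga^{j-}$ to obtain uniform convergence of $\ga^{j-}$ to $\eta$ on any bounded interval of its own half-plane-capacity parametrization. I then reparametrize each $\ga^j$ as a continuous map $[0,1]\to\overline D$ with $\ga^j(0)=a$, $\ga^j(1)=b$. Given $\e > 0$, take neighborhoods $N_a \ni a$ and $N_b \ni b$ of $d_*$-diameter less than $\e$. The forward step yields, after a reparametrization, $d_*$-closeness within $\e$ of $\ga^j$ to $\ga$ on the portion before first entry into $N_b$; the reverse step yields analogous closeness on the portion after last exit from $N_a$. Splicing the two reparametrizations on their common overlap, and absorbing the remaining pieces inside $N_a$ and $N_b$ into an $O(\e)$ error, produces a single reparametrization witnessing $d_\U(\ga^j,\ga) = O(\e)$, establishing the first half of the conclusion. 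The identification $\ga^- = \eta$ then follows because $\ga^{j-}\to\ga^-$ in $d_\U$ (time reversal is a continuous involution on unparametrized curves), while also $\ga^{j-}\to\eta$ by the symmetric conclusion applied to the reversed sequence.

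The main obstacle is the splicing step. Forward driving-function convergence alone cannot control $\ga^j$ near $b$, since the parameter $t \in [0,\infty)$ never reaches the tip $b$; symmetrically the reverse hypothesis alone cannot control $\ga^j$ near $a$. The role of the two-sided hypothesis, together with simplicity of the limits $\ga$ and $\eta$, is precisely to exclude subsequential $d_\U$-limits that agree with $\ga$ on one side but diverge on the other. Making the splicing rigorous requires checking that, for each $\e>0$, the forward-controlled region (before $N_b$) and the reverse-controlled region (after $N_a$) overlap on a common parameter subinterval on which the two reparametrizations can be merged continuously; this in turn rests on the uniform bound on $\diam K_T^j$ supplied by Proposition~\ref{es_hull} and on an argument that a simple curve from $a$ to $b$ visits $N_a$ and $N_b$ only at its respective ends.
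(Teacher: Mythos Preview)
The paper does not prove this statement; it is quoted verbatim as Theorem~1.2 of Sheffield--Sun \cite{SS} and used as a black box. So there is no ``paper's proof'' to compare against, only the question of whether your sketch would actually establish the result.

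It does not, and the gap is in your first step, not in the splicing. You claim that uniform convergence $U^j\to U$ on $[0,T]$, together with Proposition~\ref{es_hull} applied to incremental hulls, gives equicontinuity of $\tilde\ga^j$ on $[0,T]$ uniformly in $j$. Proposition~\ref{es_hull} bounds $\diam\big(g_s^j(K_t^j\setminus K_s^j)\big)$, i.e.\ the incremental hull \emph{after} applying $g_s^j$. To recover a bound on $\diam(\tilde\ga^j[s,t])$ itself you must pull back by $(g_s^j)^{-1}$, and the modulus of continuity of $(g_s^j)^{-1}$ at the boundary point $U^j(s)$ depends on the geometry of $\tilde\ga^j[0,s]$ near its tip. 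Nothing in the hypothesis prevents the curves $\tilde\ga^j$ from developing ever-narrower bottlenecks (coming close to themselves or to $\R$) as $j\to\infty$; such bottlenecks keep the driving function tame while destroying any uniform modulus for $(g_s^j)^{-1}$. Consequently driving-function convergence yields only Carath\'eodory convergence of the complements $\H\setminus K_t^j$, not uniform convergence of the traces, and your Arzel\`a--Ascoli step does not go through.

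This is exactly why the theorem is nontrivial and why the two-sided hypothesis is essential \emph{throughout}, not merely at the endpoints. In the Sheffield--Sun argument, forward driving convergence gives Carath\'eodory convergence of the domains $D\setminus\ga^j[0,t]$ seen from $b$, and reverse driving convergence gives Carath\'eodory convergence of the complements seen from $a$. Because the limit $\ga$ is a simple curve, these two one-sided domain convergences together pin down the trace: any subsequential $d_\U$-limit of $\ga^j$ must separate the two Carath\'eodory limits in the same way $\ga$ does, which forces it to equal $\ga$. Your outline correctly locates the splicing difficulty near $a$ and $b$, but treats the interior convergence as already done; in fact the interior convergence is where the forward and reverse hypotheses must be combined.
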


\section{Schramm-Loewner evolutions}
\subsection{SLE in the upper half plane}
Let $B_t$ be a one-dimensional standard Brownian motion with $B_0=0$. 
A chordal Schramm-Loewner evolution with parameter $\kappa >0$ (abbreviated as chordal SLE$_\kappa$)
is the random family of conformal map $g_t$ obtained from the chordal Loewner equation
\begin{equation}\label{eq9}
\frac{\partial}{\partial t} g_t (z)=\frac{2}{g_t(z)-\sqrt{\kappa}B_t}, \quad g_0(z)=z \quad ( z \in \H).
\end{equation}
Let $K_t$ be an evolving (random) hull  corresponding to SLE$_\kappa $.
Because $B_t$ is not (1/2)-H\"older continuous, 
we can not use {\bf P\ref{gen_sim}}
and it is not easy to see whether $K_t$ is generated by a curve.
However, according to the following results  $K_t$ is actually generated by a curve with full probability.
\begin{pp}(p148)\label{sle_gen_curve}
With probability 1,
the limit $\gamma (t):=\lim _{z\rightarrow 0}g_t^{-1}(z+\sqrt{\kappa} B_t)$ exists for any $t \geq 0$ 
and $K_t$ is generated by the curve $\gamma $.
\end{pp}

This curve  $\gamma $ is called a chordal SLE$_\kappa$ curve in $\H$ from $0$ to $\infty $. 
The following properties of SLE$_\kappa$ curves are easily verified.
\begin{pp}(p148)\label{sle_scaling}
Suppose that  $\gamma$ is a chordal SLE$_\kappa$ curve in $\H$ and $r>0$.
Let $ \widehat{\gamma }(t):=r^{-1}\gamma (r^2t)$.
Then, $\widehat{\gamma}$ has the same distribution as $\gamma$.
\end{pp}

\begin{pp}(p147)\label{sle_Markov}
Suppose that  $\gamma$ is a chordal SLE$_\kappa$ curve in $\H$.
Let $\tau $ be a stopping time. 
Let $\widehat{\gamma }(t):=g_{\tau}(\gamma (t+\tau ))-\sqrt{\kappa } B_{\tau}$. 
Then, $\widehat{\gamma}$ has the same distribution as $\gamma$.
\end{pp}
The behaviour of a chordal SLE$_\kappa$ curve depends on the value of the parameter $\kappa$.
There is three phases in the behaviour of a chordal SLE$_\kappa$ curve.
The two phases transitions take place at the values $\kappa =4$ and $\kappa =8$.
\begin{pp}(p150-151)\label{sle_phase}
Suppose that $\gamma$ be a chordal SLE$_\kappa$ curve in $\H$.
\begin{itemize}
\item
If $0<\kappa  \leq 4$, then w.p.1, $\gamma$ is a simple curve with $\gamma (0, \infty )\subset \H$ .
\item
If $4 < \kappa <8$, then w.p.1,  $\gamma (0, \infty) \cap \H \not = \H$ and $\cup _{t>0}
\overline{K_t} =\overline{\H}$.
\item
If $\kappa \geq 8$, then w.p.1, $\gamma $ is a space-filling curve, i.e.,  $\gamma [0, \infty)=\overline{\H} $.
\end{itemize}
\end{pp}

\subsection{SLE in simply connected domains}
Let $\gamma $ be a  chordal SLE$_\kappa$ curve in $\H$ from $0$ to $\infty $. As in the subsection 2.3 
let $D \subsetneq \C$ be a simply connected domain, $ \partial D$  a set of prime ends,
$a,b$ two distinct points on $\partial D$ and $\phi : D \rightarrow \H$  a conformal map with $\phi  (a)=0, \phi  (b)=\infty$.
Although $\phi$ is not unique, any other such map $\tilde \phi$ can be written as $r\phi $ for some $r>0$.
By {\bf P \ref{sle_scaling}},  $\phi ^{-1} (\gamma )$ is independent of the choice of the map up to a time change
and  we consider SLE$_\kappa$ curves in $D$ as unparametrized curves.
A chordal SLE$_\kappa$ curve in $D$ from $a$ to $b$ is defined by $\phi ^{-1} (\gamma )$.
\v2\v2

The two properties stated in the next proposition, called the domain Markov property and conformal invariance, respectively,  immediately  follow  from the definition of SLE.
\begin{pp}\label{dom_Markov}
Let $\gamma$ be a chordal SLE$_\kappa$ curve in $D$ from $a$ to $b$
and  $\mu_{a,b;D}$ be a law of $\gamma$.
Let $f : D\rightarrow D^\prime$ be  a conformal map.
Then,
\[\mu_{a,b;D}(\cdot | \gamma (0,t] ) =\mu_{\gamma (t), b;D \setminus \gamma (0,t]} (\cdot ),\]
and
\[
f \circ \mu_{a,b;D}(\cdot )=\mu_{f(a),f(b);D^\prime }(\cdot).
\]
\end{pp}
\v2


In the theory of SLE, it is easier to prove the convergence with respect to the driving function 
than in the metric $d_{\U}$.
Theorem \ref{suffi_conv} implies  the following result,
which we shall apply the following result to derive the convergence with respect to $d_{\U}$
of LERW from that of the driving function.
Let $\phi ^- : D\rightarrow \H$ be a conformal map with $\phi ^- (b)=0, \phi  ^-(a)=\infty$.
\begin{theo} (\cite{SS})\label{lem5}
Let $\{ \gamma ^j \}$ be a sequence of simple random curves travelling from $a$ to $b$ in $D$.
Let $\kappa\leq 4$, and $\gamma (a,b)$ be the chordal SLE$_\kappa$ curve in $D$ from $a$ to $b$.
$\phi \circ \gamma ^j$ and $\phi ^- \circ \gamma ^{j-}$
converge weakly to a chordal SLE$_\kappa$ curve in $\H$ with respect to the driving function.
Then $\gamma _j$ converges weakly to $\gamma (a,b)$ with respect to $d_{\mathcal U}$.
\end{theo}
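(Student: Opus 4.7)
The plan is to upgrade the deterministic Theorem \ref{suffi_conv} to the random setting by combining a Skorokhod representation argument with a subsequence extraction. The restriction $\kappa\leq 4$ enters through Proposition \ref{sle_phase}, which guarantees that SLE$_\kappa$ sample paths are simple curves, so that the hypotheses of Theorem \ref{suffi_conv} will be applicable to any sample path of the limit.

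First I would observe that the assumed weak convergence of each of $\phi\circ\gamma^j$ and $\phi^-\circ\gamma^{j-}$ on the Polish space $C([0,\infty),\R)$ (topologized by uniform convergence on compacts of the driving functions) implies marginal tightness, which in a product of Polish spaces yields joint tightness of the pair $(\phi\circ\gamma^j,\phi^-\circ\gamma^{j-})$. Given any subsequence, I would extract a further subsequence $\{j_k\}$ along which the joint law converges weakly; the two marginals of the limit are forced to be SLE$_\kappa$ driving-function laws.

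Next, by the Skorokhod representation theorem, I would realize this joint convergence on a common probability space so that $\phi\circ\gamma^{j_k}$ and $\phi^-\circ\gamma^{j_k-}$ converge uniformly on compacts, almost surely, to the driving functions of certain random simple curves $\gamma_\infty$ in $D$ from $a$ to $b$ and $\eta$ in $D$ from $b$ to $a$ (each having SLE$_\kappa$ law under $\phi$ or $\phi^-$, respectively). Applying Theorem \ref{suffi_conv} pathwise then yields $\eta=\gamma_\infty^-$ and $\gamma^{j_k}\to\gamma_\infty$ in the $d_\U$ metric almost surely. Since $\phi\circ\gamma_\infty$ is SLE$_\kappa$ in $\H$, by definition $\gamma_\infty$ has the law of $\gamma(a,b)$. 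As every subsequence admits a sub-subsequence converging in law to the same $\gamma(a,b)$, the full sequence $\gamma^j$ converges weakly in $d_\U$ to $\gamma(a,b)$.

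The main obstacle is the passage from marginal to joint convergence: the hypothesis delivers weak convergence of each of the two driving-function sequences separately, but Theorem \ref{suffi_conv} requires both limits to be present on the \emph{same} sample path, so the tightness/subsequence/uniqueness-of-limit device is essential in order to apply the deterministic result pathwise. A secondary but crucial point is the appeal to Proposition \ref{sle_phase} guaranteeing that the SLE limit is a simple curve, which is why the theorem is restricted to $\kappa\leq 4$.
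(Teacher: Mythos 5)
Your argument is sound, and since the paper itself gives no proof of Theorem \ref{lem5} (it simply cites \cite{SS} and notes that ``Theorem \ref{suffi_conv} implies the following result''), the Skorokhod-plus-subsequence upgrade you describe is exactly the standard way this corollary is extracted from the deterministic Theorem \ref{suffi_conv}. The key points are all present: marginal tightness of the two driving-function laws yields joint tightness; along a subsequence the joint law converges, and Skorokhod representation gives almost sure uniform-on-compacts convergence of both driving functions simultaneously; {\bf P\ref{sle_gen_curve}} and {\bf P\ref{sle_phase}} (with $\kappa\leq 4$) guarantee that each limiting driving function is a.s.\ the driving function of a simple curve, so Theorem \ref{suffi_conv} applies pathwise; and the subsequence principle promotes this to weak convergence of the full sequence in $d_\U$ to the SLE$_\kappa$ law $\gamma(a,b)$.
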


The reversibility of SLE holds at least for $\k\leq 4$.
\begin{theo} (Theorem 2.1 in \cite{Z_rev})\label{rev_sle}
Let $\kappa \leq 4$ .
The time-reversal of a chordal SLE$_\kappa$ curve in $D$ from $a$ to $b$ has the same distribution as
chordal SLE$_\kappa$ curve in $D$ from $b$ to $a$.
\end{theo}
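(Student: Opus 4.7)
The plan is to prove reversibility via an explicit coupling between the forward and reversed SLE$_\kappa$ processes, following the scheme introduced by Zhan. By the conformal invariance part of Proposition \ref{dom_Markov} it suffices to work in a convenient reference configuration; take $D=\H$, $a=0$, $b=x_0$ for some $x_0>0$, and denote by $\gamma$ the chordal SLE$_\kappa$ curve in $\H$ from $0$ to $x_0$ and by $\tilde{\gamma}$ the chordal SLE$_\kappa$ curve in $\H$ from $x_0$ to $0$. The goal is to realize both curves on a single probability space such that $\tilde{\gamma}$ coincides with the time reversal $\gamma^-$ almost surely.

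I would grow the two curves simultaneously, using two independent Brownian drivers $B_s$ and $\tilde{B}_{\tilde s}$. Let $g_s$ be the chordal Loewner map for $\gamma$ (with target $x_0$) and $\tilde{g}_{\tilde s}$ the corresponding map for $\tilde{\gamma}$ (with target $0$), each described as a Loewner chain with a target-preserving drift of the form of an SLE$_\kappa(\rho)$ variant. The candidate coupling is specified by requiring that, conditionally on $\tilde{\gamma}(0,\tilde s]$, the law of $\gamma$ inside the slit domain $\H\setminus\tilde{\gamma}(0,\tilde s]$ is chordal SLE$_\kappa$ from $0$ to the tip $\tilde{\gamma}(\tilde s)$, and symmetrically with the roles exchanged. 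To show such a coupling exists one identifies a two-parameter martingale observable $M(s,\tilde s)$, built from the conformal distance between the two tips in the doubly slit domain, and verifies that for each fixed $\tilde s$ it is a local martingale in $s$ and vice versa; the assumption $\kappa\leq 4$ is used here because the exponent making the It\^o correction match the Loewner drift comes out to be admissible in this range. A Girsanov change of measure applied to each Brownian driver then produces the simultaneous coupling on a single probability space.

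The second step is to verify that under this coupling the two growing curves together trace out a single simple Jordan arc from $a$ to $b$: with probability one there is a finite time at which the tip of $\gamma$ meets the tip of $\tilde{\gamma}$, and no lattice of unswept region is left behind. This uses Proposition \ref{sle_phase} (so that both curves are simple and avoid $\partial\H\setminus\{0,x_0\}$) together with a half-plane capacity/conformal radius estimate to control the shrinking of the complement, and a $0$--$1$ law to upgrade positivity of the meeting probability to certainty. I expect this is the main obstacle, since one must simultaneously control two time-changes coming from two different Loewner parametrizations and rule out pathological configurations where the two arcs approach a common limiting point without actually merging into a continuous curve; the continuity of the SLE trace from Proposition \ref{sle_gen_curve} and the H\"older regularity estimates of Proposition \ref{es_hull} are what makes the merging argument run.
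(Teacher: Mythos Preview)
The paper does not prove this theorem at all: it is stated with a citation to Zhan \cite{Z_rev} and no argument is given. So there is no ``paper's own proof'' to compare your attempt against.

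That said, your outline is recognizably Zhan's commutation/coupling scheme, and as a roadmap it is broadly correct. A few points of caution if you intend to turn this into an actual proof rather than a sketch. First, the natural reference configuration is $(\H,0,\infty)$, not $(\H,0,x_0)$ with finite $x_0$; working with a finite target forces drifts on both drivers from the outset and complicates the computation unnecessarily. Second, your explanation of where the restriction $\kappa\le 4$ enters is off: the martingale identity underlying the coupling holds for all $\kappa$, and the genuine obstruction is topological---for $\kappa>4$ the traces are non-simple (Proposition~\ref{sle_phase}), so the two arcs can interact with each other's hulls in ways that destroy the ``single Jordan arc'' conclusion. Third, the hardest part of Zhan's argument is not the existence of the coupling but the step you flag at the end: showing that the conditionally-resampled curves exhaust the full trace and actually meet. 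This requires an iterated resampling argument and a careful limit, not just the qualitative inputs you cite (Propositions~\ref{sle_gen_curve} and~\ref{es_hull} by themselves do not give the needed control). Your sketch acknowledges this is the main obstacle but does not really indicate how it is overcome.
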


If $\kappa >8$, then SLE curve is not reversible.


\section{Loop erased random walks}
\subsection{Some property of LERW}
For any $u, v \in \C $, we write $[u, v]=\{(1-t)u+tv :0\leq t\leq 1\} $
for the line segment whose end points are  $u$ and $v$.
Let $V\subset \C$ be a countable subset with $0 \in V$. Let $E:V\times V\rightarrow [0, \infty)$ and
 $E=\{ (u, v) : E(u, v)>0 \} $.
We call $G=(V, E)$ a directed weighted graph.  
We assume that $\sum _ {v \in V}E(u, v) < \infty$ for every $u \in V$, and put
\[ p(u,v):=\frac{E(u, v)}{\sum _{w \in V} E(u,w)} .\]
We call $G$ that satisfies the following conditions a planar irreducible graph.
\begin{enumerate}
\item
$G$ is a planar graph.\\
 (i.e. for every distinct edges $(u,v),(u^\prime ,v^\prime ) \in E$, $[u,v]\cap [u^\prime ,v^\prime ] \in \left\lbrace \emptyset, \{u \},\{v \}\right\rbrace $.)
\item
For any compact set $K \subset \C$, the number of vertices $v \in K$ 
is finite.
\item
The Markov chain $S(\cdot)$ on $V$ with transition probability $p(u,v)$ is irreducible.\\
(i.e. for every $u,v \in V$, there exists $n \in \N$ such that $\P (S(n)=v\ |\ S(0)=u)>0$.)
\end{enumerate}
We call $S(\cdot)$ the natural random walk on $G$.
For the reminder of this paper we think that $G$ is a planar irreducible graph.

For any simply connected domain $D\subsetneq \C$,  let $V (D):=V \cap D$.
Define
\[\partial _{out} V (D):=
\{ (u,v) \in E : [u, v]\cap \partial D \not = \emptyset, u \in V(D)\} \]
and 
\[\partial _{in}V(D):=
\{(u,v) \in E : [u, v]\cap \partial D \not = \emptyset,   v \in V(D)\} .\]
The first exit time from D is defined by 
\[ \tau  _D:= 
\begin{cases}
\inf \{ n\geq 1 : (S(n-1),S(n)) \in \partial V_{out} (D)\} \quad \text{if} \quad S(0) \in V(D)\\
\inf \{ n\geq 2 : (S(n-1),S(n)) \in \partial V_{out} (D)\} \quad \text{if} \quad (S(0),S(1)) \in \partial _{in}V(D)\\
0 \quad \text{otherwise}
\end{cases}.\]
We sometimes consider the edge $(u,v) \in \partial _{out} V (D)$ as the vertex $v$,
and the edge $(u,v) \in \partial _{in} V (D)$ as the vertex $u$;
e.g., we write $S(\tau _D) \in \partial _{out} V(D)$ 
and $S(0) \in \partial _{in}V(D)$
and for a set $J \subset \partial D$,
we write $S(\tau _D) \in J$ instead of writing $[S(\tau _D-1),S(\tau _D)]\cap J \not = \emptyset$.
\v2
\n{\sc Loop erasure.}
Let $\omega =(\omega _0, \omega _1, \dots , \omega _n)$ be a finite sequence of points.
Let $s_0=\max \{ k\geq 0: \omega _0=\omega _k \}$. Inductively, we define
$ s_m=\max \{ k\geq 0: \omega _{s_{m-1}+1}=\omega _k \}$.
If $l= \min \{m\geq 0: \omega _{s_m}=\omega _n\}$, 
then the loop erasure of $\omega $ is defined by
\[ L[\omega ]=(\omega _{s_0}, \omega_{s_1},  \dots , \omega _{s_l}). \]

The time-reversal of $\omega $ is defined by
\[\omega ^-=(\omega _n, \omega _{n-1}, \dots , \omega _0).\]
It is readily recognized that the operations $L$ and $^-$ are not commutable, namely,
$L[\omega ^-]\not=L[\omega ]^-$ in general.
If the transition probability $p(u,v)$ is symmetric,  then the following result has been proved by Lawler in \cite{Law2}.
For our purpose, we prove the following result without assuming that $p(u,v)$ is symmetric.
\begin{prop}\label{rev_LERW}
Let $S(\cdot)$ be a natural random walk on $G$.
\[\P (L[(S(0), S(1), \dots , S({\tau _D}))^-]=\omega)
= \P (L[(S(0), S(1), \dots , S({\tau _D}))]^-=\omega).\]
\end{prop}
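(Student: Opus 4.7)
Both sides vanish unless $\omega$ is of the form $\omega = \eta^-$ for some simple path $\eta = (\eta_0, \eta_1, \ldots, \eta_l)$ in $V$ with $\eta_0 = S(0)$, the vertices $\eta_0, \eta_1, \ldots, \eta_{l-1}$ in $V(D)$, $\eta_l \notin V(D)$, and each $(\eta_j, \eta_{j+1}) \in E$; fix such an $\eta$. The plan is to derive explicit formulas for both
\[
\P(L[S[0,\tau_D]] = \eta) \quad \text{and} \quad \P(L[S[0,\tau_D]^-] = \eta^-)
\]
as products of diagonal Green's functions
\[
G_{D'}(u, u) := \sum_{k \geq 0} \P_u(S(k) = u, \tau_{D'} > k) \qquad (D' \subset V, \ u \in D'),
\]
and then to establish a combinatorial identity between the two products.

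For the forward loop erasure I will use the standard last-visit decomposition: a walk $\sigma$ from $\eta_0$ stopped at $\tau_D$ satisfies $L[\sigma] = \eta$ iff it factors uniquely as $\sigma = L_0 \cdot (\eta_0, \eta_1) \cdot L_1 \cdots L_{l-1} \cdot (\eta_{l-1}, \eta_l)$, where $L_j$ is an arbitrary loop based at $\eta_j$ staying in $D_j := D \setminus \{\eta_0, \ldots, \eta_{j-1}\}$, and summing over such decompositions yields
\[
\P(L[S[0,\tau_D]] = \eta) = \prod_{j=0}^{l-1} p(\eta_j, \eta_{j+1}) \cdot \prod_{j=0}^{l-1} G_{D_j}(\eta_j, \eta_j).
\]
For the reverse loop erasure, I will unwind the definition of $L[\sigma^-]$ to show that $L[\sigma^-] = \eta^-$ iff $\sigma$ visits $\eta_0, \eta_1, \ldots, \eta_l$ in this order as successive \emph{first}-visits, with each first-visit of $\eta_{j+1}$ entered from $\eta_j$. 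This yields an analogous factorization $\sigma = L_0^* \cdot (\eta_0, \eta_1) \cdot L_1^* \cdots L_{l-1}^* \cdot (\eta_{l-1}, \eta_l)$ in which $L_j^*$ is now a loop at $\eta_j$ in $D_j^* := D \setminus \{\eta_{j+1}, \ldots, \eta_{l-1}\}$ (future $\eta_k$'s are forbidden so as not to preempt their first-visits, while already-first-visited $\eta_k$'s may be revisited freely), and summing gives
\[
\P(L[S[0,\tau_D]^-] = \eta^-) = \prod_{j=0}^{l-1} p(\eta_j, \eta_{j+1}) \cdot \prod_{j=0}^{l-1} G_{D_j^*}(\eta_j, \eta_j).
\]

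It then remains to prove the product identity $\prod_{j=0}^{l-1} G_{D_j}(\eta_j, \eta_j) = \prod_{j=0}^{l-1} G_{D_j^*}(\eta_j, \eta_j)$. My tool will be the swap identity
\[
G_{D'}(u, u) \, G_{D' \setminus \{u\}}(v, v) = G_{D'}(v, v) \, G_{D' \setminus \{v\}}(u, u)
\]
for distinct $u, v \in D' \subset V$, which I will derive from the first-passage decomposition $G_{D'}(v, v) = G_{D' \setminus \{u\}}(v, v) + G_{D'}(v, u) G_{D'}(u, v)/G_{D'}(u, u)$: multiplying through by $G_{D'}(u, u)$ yields a right-hand side manifestly symmetric in $u$ and $v$. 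Iterated transpositions then show that the product $\prod_i G_{D \setminus \{u_1, \ldots, u_{i-1}\}}(u_i, u_i)$ is invariant under every permutation of the ordered list $(u_1, \ldots, u_k)$; applying this to the reversal of $(\eta_0, \ldots, \eta_{l-1})$ converts $\prod_j G_{D_j}(\eta_j, \eta_j)$ into $\prod_j G_{D_j^*}(\eta_j, \eta_j)$, finishing the proof.

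The main obstacle, I expect, is the bijective first-visit characterization of $\{\sigma : L[\sigma^-] = \eta^-\}$ used to obtain the second of the two formulas; care is needed to verify that the characterization is exact and that the transitions $\eta_j \to \eta_{j+1}$ are automatically the first-visit steps under the $D_j^*$-constraint on the intervening loops. Once this is in place, the remaining work is standard loop-sum evaluation and short algebra.
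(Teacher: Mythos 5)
Your proof is correct, and the overall skeleton — write both sides as a product of one-step transition probabilities times a product of diagonal Green's functions $G_{D'}(u,u)$, then establish a product identity that exchanges last-visit domains $D\setminus\{\eta_0,\dots,\eta_{j-1}\}$ for first-visit domains $D\setminus\{\eta_{j+1},\dots,\eta_{l-1}\}$ — is exactly what the paper does. The differences are in how the two ingredients are obtained. For the reverse loop erasure the paper substitutes $\omega'\mapsto(\omega')^-$, rewrites the sum as a forward loop erasure of the \emph{dual} walk $p^*(x,y)=p(y,x)$, applies the standard last-visit formula there, and finally notices $G_p(x;D')=G_{p^*}(x;D')$; you instead unwind the definition of $L[\sigma^-]$ directly into a first-visit decomposition of $\sigma$ itself, which is a cleaner route that never leaves the original walk (the dual-walk trick is implicit in the observation that reversing a loop at $\eta_j$ is a bijection preserving total weight). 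For the product identity the paper cites Lawler's exchange lemma (equation (12.2.3) of \cite{Law2}); you re-derive it from the elementary swap identity $G_{D'}(u,u)\,G_{D'\setminus\{u\}}(v,v)=G_{D'}(v,v)\,G_{D'\setminus\{v\}}(u,u)$ and transposition invariance. Two small points worth tightening if you write this up: the right-hand side after multiplying through by $G_{D'}(u,u)$ is not \emph{manifestly} symmetric — only the cross term $G_{D'}(v,u)G_{D'}(u,v)$ is; the swap identity follows because the left-hand side and the cross term are symmetric, so the remaining term must be too. And in the first-visit characterization you should note explicitly that the condition ``before the first visit to $\eta_{j+1}$ the walk avoids $\eta_{j+1},\dots,\eta_l$'' is subsumed by the ordering of first-visit times, which is what lets the intervening loop $L_j^*$ range freely over $D_j^*$ (revisits to $\eta_0,\dots,\eta_{j}$ permitted). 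With those details filled in your argument is a self-contained variant that avoids citing the exchange lemma and avoids introducing $p^*$, at the modest cost of carrying out the first-visit bijection carefully.
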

\v2\n
{\sc Remark.}\ 
Theorem \ref{rev_LERW} implies that the convergence to the radial SLE$_2$ in the result of Yadin and Yehudayoff
(Theorem1.1 in \cite{YY}) is valid
also for LERW with the loops discarded in the chronological order instead of
unti-chronological order.
\v2

\begin{proof}
Let $\omega =(\omega _0, \dots , \omega _n)$ and
$\omega _1, \dots , \omega _{n-1} \in V(D)$ be  distinct and $(\omega _{n-1},\omega _n) \in \partial _{out}V(D)$.
Our task is to show the identity
\begin{equation}
 \P (L[(S(0), \dots , S({\tau _D}))]=\omega )
= \P(L[(S(0) , \dots , S({\tau _D}))^-]=\omega^-). \label{a1} 
\end{equation}
Let $q : V \times V \rightarrow [0, 1]$.
Set
\[G_q(x;D)=1+\sum _{k = 0}^\infty 
\sum _{\omega ^\prime \subset D :\omega ^\prime _0=x,\omega ^\prime _k=x}
 q(\omega ^\prime _0 ,\omega^\prime _1) \cdots
q(\omega ^\prime_{k-1}, \omega ^\prime_k),
\]
where the inner summation is taken over all paths 
$\omega^\prime = (\omega ^\prime_0, \dots , \omega^\prime _k) $ in $D$
such that $\omega^\prime _0 =x, \omega ^\prime_k=x$.

The probability of LERW is described by the following (See \cite{Law2}).
\[
\P (L[(S(0), \dots , S({\tau _D})]=\omega) =
\prod _{j=0}^{n-1} p(\omega _j, \omega _{j+1}) G_p(\omega _j; D \setminus \{ \omega _0, \dots ,\omega _{j-1} \} )
\]
By the exchange lemma (the equation (12.2.3) in \cite{Law2}), we get 
\begin{equation}
\P (L[(S(0), \dots , S({\tau _D})]=\omega) =
\prod _{j=0}^{n-1} p(\omega _j, \omega _{j+1}) 
G_p(\omega _j; D \setminus \{ \omega _{j+1}, \dots ,\omega _{n-1} \} )\label{a2}
\end{equation}
On the other hand,
\begin{align*}
\P(L[(S(0) , \dots , S({\tau _D}))^-]=\omega^-)
&=\sum_{\omega ^\prime \subset D : L[(\omega ^\prime )^-]=\omega^-}\prod _{i=0}^{|\omega ^\prime |-1}
p(\omega ^\prime _i, \omega ^\prime _{i+1})\\
&=\sum_{\omega ^\prime \subset D : L[\omega ^\prime ]=\omega^-}\prod _{i=0}^{|\omega ^\prime |-1}
p^*(\omega ^\prime _i, \omega ^\prime _{i+1}),
\end{align*}
where $|\omega ^\prime |$  is the length of $\omega ^\prime$ and $p^*(x,y):=p(y,x)$.
This equation and decomposing $\omega ^\prime$ between its last visit to $\omega _{n-1},\dots , \omega _0$ imply that
\begin{align}
\P(L[(S(0) , \dots , S({\tau _D}))^-]=\omega^-)
&=\prod _{j=0}^{n-1}p^*(\omega _{j+1}, \omega _{j})
G_{p^*} (\omega _{j} ; D \setminus \{ \omega _{n-1}, \dots , \omega _{j+1}\})\notag \\
&=\prod _{j=0}^{n-1}p(\omega _{j}, \omega _{j+1})
G_{p^*} (\omega _{j} ; D \setminus \{ \omega _{j+1}, \dots , \omega _{n-1}\}).\label{a3}
\end{align}
Finally observe that $G_p(x;D^\prime) = G_{p^*}(x; D^\prime)$. 
Thus, (\ref{a2}) and (\ref{a3}) imply (\ref{a1}).
\end{proof}
Let $\gamma =(\gamma _0, \gamma _1, \dots , \gamma _l)$ be the loop erasure of the time-reversal 
of the natural random walk stopped on exiting $D$. 
By Proposition \ref{rev_LERW}, we may think that $\gamma $ is the time-reversal of the loop erasure.
(In Section 5, we treat $\gamma$ as the time-reversal of the loop erasure.
But in this section, we treat $\gamma$ as the loop erasure of the time-reversal
because it is more suitable to consider the following properties of $\gamma$.)

Let $D_j:=D \setminus \cup _{i=0}^{j-1}[\gamma _i, \gamma _{i+1}]$.
For any $j \in \N$,
\[n_j:=\min \{n \geq 0 : S(n)=\gamma _j \} .\]
Because the loop erasure $\gamma$ is determined from the boundary, $\gamma$ has the following Markov property.
\begin{prop}(Lemma 3.2. in \cite{LSW})\label{LE_Markov}
Conditioned on $\gamma [0,j]$, the following holds.
\begin{enumerate}
\item
$S[0, n_j]$ and $S[n_j , \tau _D]$ are independent.
\item
$\gamma [j, l] $ has the same distribution as the loop erasure of time-reversal of
the natural random walk $S[0, \tau _{D_j}]$ conditioned to exit at $\gamma _j$.
\end{enumerate}
\end{prop}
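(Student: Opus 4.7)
The plan is to decompose the walk $S[0, \tau_D]$ at the stopping time $n_j$ and show that the loop erasure of its time-reversal splits into two pieces depending measurably on the two complementary pieces of the walk. Analyzing the loop-erasure algorithm applied to $S^- = (S(\tau_D), \ldots, S(0))$, I would first observe that the visit times satisfy $\tau_D = n_0 > n_1 > \cdots > n_l = 0$ and that each $n_i$ is in fact the \emph{first} hitting time of $\gamma_i$ by $S$: if $S$ visited $\gamma_i$ at some time $k < n_i$, then $\gamma_i$ would occur at index $\tau_D - k > \tau_D - n_i = s_i$ in $S^-$, violating the maximality in the definition of $s_i$. In particular $S[0, n_j]$ avoids $\{\gamma_0, \ldots, \gamma_{j-1}\}$ and meets $\gamma_j$ only at its terminal time.

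Combining this with the planarity of $G$ and the fact that each segment $[\gamma_i, \gamma_{i+1}]$ is an edge of $G$ (consecutive vertices of a loop erasure are always joined by an edge of the underlying walk), no step of $S[0, n_j]$ can cross the slit $\bigcup_{i=0}^{j-1}[\gamma_i, \gamma_{i+1}]$ transversally. Hence $n_j$ coincides with the first exit time $\tau_{D_j}$ of $S$ from $D_j$, and $S[0, n_j]$ is the trace of the natural walk in $D_j$ from $S(0)$ first exiting at $\gamma_j$. I would then verify the two factorizations
\[
\gamma[0, j] = L\bigl[S[n_j, \tau_D]^-\bigr], \qquad \gamma[j, l] = L\bigl[S[0, n_j]^-\bigr].
\]
Both are direct consequences of the definition of $L$: the monotonicity above ensures that every ``last occurrence'' invoked when producing $\gamma_0, \ldots, \gamma_j$ already lies inside $S[n_j, \tau_D]^-$, so restricting the algorithm to this shorter sequence returns the same output; restarting the algorithm at the index $s_j = \tau_D - n_j$ is exactly the loop erasure of $S[0, n_j]^-$. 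Thus $\gamma[0, j]$ is measurable with respect to $S[n_j, \tau_D]$ alone, while $\gamma[j, l]$ is measurable with respect to $S[0, n_j]$ alone.

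The strong Markov property of $S$ at the stopping time $n_j$ now closes the argument. On the event $\{n_j < \tau_D,\, S(n_j) = \gamma_j\}$ the pieces $S[0, n_j]$ and $S[n_j, \tau_D]$ are independent, with $S[0, n_j]$ distributed as the natural walk from $S(0)$ in $D_j$ conditioned to exit $D_j$ at $\gamma_j$, and $S[n_j, \tau_D]$ distributed as an unconditioned natural walk from $\gamma_j$ stopped on exiting $D$. Further conditioning on $\gamma[0, j]$ is a constraint on $S[n_j, \tau_D]$ alone, so the independence and the conditional law of $S[0, n_j]$ are preserved; this gives assertion (1). Assertion (2) follows immediately, since $\gamma[j, l] = L[S[0, n_j]^-]$ is a function of the first piece whose conditional law has just been identified. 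The one delicate ingredient beyond routine bookkeeping is the geometric step in which the planarity of $G$ is used to ensure that $S[0, n_j]$ cannot cross the slit $\gamma[0, j]$ transversally; everything else is a careful unwinding of the loop-erasure algorithm combined with the standard strong Markov property.
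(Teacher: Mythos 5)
Your argument is correct and follows what is essentially the standard proof of Lemma 3.2 in \cite{LSW}, which the paper cites without reproducing: the index monotonicity $n_0>\cdots>n_l$, the two factorizations of $L$, the planarity barrier giving $n_j=\tau_{D_j}$, and the strong Markov property. The one step that needs more care is the last: $n_j$ is not a priori a stopping time for the forward filtration of $S$, since $\gamma_j$ is a functional of the entire path $S[0,\tau_D]$, and in fact $S[0,n_j]$ and $S[n_j,\tau_D]$ are \emph{not} unconditionally independent --- for instance, they share the random endpoint $\gamma_j=S(n_j)$, which is determined by either piece. The cleaner ordering is to fix a deterministic self-avoiding path $g=(g_0,\ldots,g_j)$, set $\sigma$ to be the first hitting time of $g_j$ (a genuine stopping time), and observe that $\{\gamma[0,j]=g\}$ is the intersection of an event depending only on $S[0,\sigma]$ (namely that $S$ stays in $D^g_j:=D\setminus\bigcup_{i<j}[g_i,g_{i+1}]$ and terminates at $g_j$ before exiting $D$) with an event depending only on $S[\sigma,\,\cdot\,]$ (namely that $L[S[\sigma,\tau_D]^-]=g$); the strong Markov property at $\sigma$ then yields both assertions. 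This also corrects the phrase ``further conditioning on $\gamma[0,j]$ is a constraint on $S[n_j,\tau_D]$ alone'' --- the conditioning event restricts both pieces, but the restrictions decouple at $\sigma$. With this reordering, the geometric and combinatorial core of your proof stands.
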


\subsection{Martingale observable for LERW}
Let $D\subsetneq \C$ be a simply connected domain.
Let $S^x(\cdot)$ be a natural random walk on $G$ started at $x \in V$.
Let $v_0 \in V(D) \cup \partial _{in}V(D)$ and $\gamma$ be the loop erasure of time-reversal 
of the natural random walk $S^{v_0}[0, \tau _D]$. 
Let $D_j:=D \setminus \cup _{i=0}^{j-1}[\gamma _i, \gamma _{i+1}]$.
The hitting probability $H_j(u,v)$ is defined by
\[H_j(u,v):=\P( S^u(\tau_{D_j})=v ).\]
Let $\mathcal{F}_j$ be a filtration generated by $\gamma [0, j]$.
\begin{prop}\label{mart_LERW}
For any $w \in V(D)$, let 
\[M_j:=\frac{H_j(w, \gamma _j)}{H_j(v_0, \gamma _j)} .\]
Then, $M_j$ is a martingale with respect to $\mathcal{F}_j$.
\end{prop}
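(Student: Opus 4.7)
My plan is to prove the martingale identity $\E[M_{j+1} \mid \mathcal{F}_j] = M_j$ by computing the one-step conditional law of $\gamma$ explicitly and then reducing to two classical random-walk identities. The main inputs are Proposition \ref{LE_Markov} and the LERW probability formula used in the proof of Proposition \ref{rev_LERW}.

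First, I identify the transition kernel of $\gamma$. By Proposition \ref{LE_Markov}, conditional on $\mathcal{F}_j$ the continuation $\gamma[j,l]$ is the time reversal of $\eta := L[S^{v_0}[0, \tau_{D_j}]]$ conditioned on its exit vertex $\eta_m = \gamma_j$, so $\gamma_{j+1}$ corresponds to the penultimate vertex $\eta_{m-1}$ of $\eta$. Writing the probability of $\eta = (v_0, \omega_1, \ldots, \omega_{m-2}, u, \gamma_j)$ via the LERW formula in the proof of Proposition \ref{rev_LERW}, peeling off the final factor $p(u, \gamma_j)\, G_p(u; D_j)$, and using that for walks avoiding $u$ the Green-function weights $G_p(\,\cdot\,; D_j \setminus \{\cdot, u\})$ are unchanged by additionally removing the segment $[\gamma_j, u]$, I recognize the residual sum over prefixes $(v_0, \omega_1, \ldots, \omega_{m-2})$ as the LERW probability $H_{j+1}(v_0, u)$ in the reduced domain $D_{j+1} = D_j \setminus [\gamma_j, u]$. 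This yields
\[
\P(\gamma_{j+1} = u \mid \mathcal{F}_j) = \frac{p(u, \gamma_j)\, G_p(u; D_j)\, H_{j+1}(v_0, u)}{H_j(v_0, \gamma_j)}.
\]

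Next I invoke two elementary identities for the walk killed on exiting $D_j$. Let $\hat G_p(x, u; D_j)$ denote the expected number of visits to $u$ by $S^x$ before exiting $D_j$. The last-step decomposition
\[
H_j(x, \gamma_j) = \sum_u p(u, \gamma_j)\, \hat G_p(x, u; D_j)
\]
follows from linearity of expectation applied to exit-edge traversals into $\gamma_j$, and the first-hit factorization
\[
\hat G_p(x, u; D_j) = H_{j+1}(x, u)\, G_p(u; D_j)
\]
follows from the strong Markov property at the first visit to $u$, where $H_{j+1}(x, u)$ is the probability that $S^x$ reaches $u$ before exiting $D_j$. Combined, they give $\sum_u p(u, \gamma_j)\, G_p(u; D_j)\, H_{j+1}(x, u) = H_j(x, \gamma_j)$ for every $x$. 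Substituting the transition probability into $\E[M_{j+1} \mid \mathcal{F}_j]$, the factor $H_{j+1}(v_0, u)$ cancels against the denominator of $M_{j+1}$, and applying the combined identity with $x = w$ yields $\E[M_{j+1} \mid \mathcal{F}_j] = H_j(w, \gamma_j)/H_j(v_0, \gamma_j) = M_j$.

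The main obstacle is the first step: matching the residual sum in the LERW formula with the LERW probability in the reduced domain $D_{j+1}$. This requires the combinatorial observation that $V(D_{j+1}) = V(D_j) \setminus \{u\}$, so the Green-function weights appearing for walks avoiding $u$ literally coincide in $D_j$ and $D_{j+1}$, and also that $u$ takes on the role of an exit vertex of $D_{j+1}$. Once this identification is made, the remaining steps are purely algebraic.
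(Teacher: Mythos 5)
Your proof is correct, but it follows a genuinely different route from the paper's. The paper first rewrites $M_j$ as the Radon--Nikodym-type ratio $\mathbf{Q}(L_w[0,j]=\gamma[0,j])\big/\mathbf{Q}(L_{v_0}[0,j]=\gamma[0,j])$, where $L_x$ is the LERW of an independent copy $\widehat S^x$; Proposition \ref{LE_Markov} gives the factorization $\mathbf{Q}(L_x[0,j]=\gamma[0,j]) = H_j(x,\gamma_j)\,\mathbf{Q}(L_{\gamma_j}[0,j]=\gamma[0,j])$, and then the martingale identity becomes a one-line telescoping sum over $v$ using that $\P(\gamma_{j+1}=v\mid\mathcal F_j)=\mathbf{Q}(L_{v_0}(j+1)=v\mid L_{v_0}[0,j]=\gamma[0,j])$. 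No Green-function computation appears. You instead compute the one-step transition kernel $\P(\gamma_{j+1}=u\mid\mathcal F_j) = p(u,\gamma_j)\,G_p(u;D_j)\,H_{j+1}(v_0,u)/H_j(v_0,\gamma_j)$ explicitly from the (exchanged) LERW formula used in the proof of Proposition \ref{rev_LERW}, and reduce the martingale property to the two elementary identities $H_j(x,\gamma_j)=\sum_u p(u,\gamma_j)\,\hat G_p(x,u;D_j)$ and $\hat G_p(x,u;D_j)=H_{j+1}(x,u)\,G_p(u;D_j)$. The extra work this buys you is the combinatorial identification of the residual sum with $H_{j+1}(v_0,u)$, which rests (correctly) on planarity ensuring $V(D_j\setminus[\gamma_j,u])=V(D_j)\setminus\{u\}$ and that no edge crosses the segment $[\gamma_j,u]$ except at its endpoints. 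Your approach is more concrete and delivers the transition law of $\gamma$ as a byproduct, which the paper never writes down; the paper's approach is shorter and bypasses the Green-function bookkeeping entirely.
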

Lawler, Schramm and Werner \cite{LSW} point out that the martingale $M_j$ given above should be a possible martingale observable, although   they don't  adopt  it but a martingale formed by the Green functions of  evolving domains. They  provide a curtailed proof that $M_j$ is a martingale. Since $M_j$ plays the central role     
in this paper we give a detailed proof of this fact.
\begin{proof}
First, we consider another representation of $M_j$.
Let $\widehat{S}^x(\cdot)$ be a independent copy of $S^x(\cdot)$
and $L_x$ be the loop erasure of the time-reversal of $\widehat{S}^x[0, \tau_D]$.
We will denote by $\mathbf{Q}$ the law of $\widehat{S}$.
Fix $\gamma [0,j]$. By proposition \ref{LE_Markov}, 
\begin{align*}
\frac{\mathbf{Q}(L_w[0,j]=\gamma [0,j])}{\mathbf{Q}(L_{v_0}[0,j]=\gamma [0,j])}
&=\frac{\mathbf{Q}(\widehat{S}^w(\tau_{D_j})=\gamma _j)\mathbf{Q}(L_{\gamma_j}[0,j]=\gamma[0,j])}
{ \mathbf{Q}(\widehat{S}^{v_0}(\tau_{D_j})=\gamma _j)\mathbf{Q}(L_{\gamma_j}[0,j]=\gamma[0,j])}\\
&=\frac{H_j(w,\gamma _j)}{H_j(v_0, \gamma _j)}.
\end{align*}
Therefore, we can write
\[M_j=\frac{\mathbf{Q}(L_w[0,j]=\gamma [0,j])}{\mathbf{Q}(L_{v_0}[0,j]=\gamma [0,j])}.\]
Hence,
\begin{align*}
\E [M_{j+1}|\gamma [0,j]]&=\sum _{v \in V(D_j)} \P (\gamma _{j+1}=v|\gamma [0,j])
\cdot \frac{\mathbf{Q}(L_w[0,j]=\gamma[0,j], L_w(j+1)=v)}{\mathbf{Q}(L_{v_0}[0,j]=\gamma[0,j], L_{v_0}(j+1)=v)},
\end{align*}
and, since $\P (\gamma _{j+1}=v|\gamma [0,j])=\mathbf{Q}( L_{v_0}(j+1)=v | L_{v_0}[0,j]=\gamma[0,j])$,
the right-hand side reduces to 
\begin{align*}
\sum _{v \in V(D_j)} \frac{\mathbf{Q}(L_w[0,j]=\gamma[0,j], L_w(j+1)=v)}{\mathbf{Q}(L_{v_0}[0,j]=\gamma[0,j])}
=\frac{\mathbf{Q}(L_w[0,j]=\gamma[0,j])}{\mathbf{Q}(L_{v_0}[0,j]=\gamma[0,j])}=M_j.
\end{align*}
Thus, $M_j$ is a martingale.
\end{proof}

\subsection{Estimates of discrete harmonic measures}
For $\delta >0$, the graph $G_\delta =(V_\delta , E_\delta )$ defined by
\[ V_\delta =\{\delta u : u \in V \} , \quad E_\delta =\{(\delta u, \delta v):E(u, v)>0\}. \]
Let the Markov chain $S_\delta (\cdot) $ on $V_\delta $ be the scaling of $S(\cdot)$ by a factor of $\delta $.
We call $S_\delta (\cdot)$ the natural random walk on $G_\delta$.
Let $S_\delta^x(\cdot)$ be a natural random walk on $G_\delta$ started at $x \in V_\delta$.
Similarly, we can define $ H_j^{(\delta)}(u,v)$, $ V_\delta (D)$, $\partial _{out}V_\delta (D)$,
$\partial _{in}V_\delta (D)$.

Let $\D=\{ z\in \C: |z|<1\} $ be the unit disc.

\begin{defi}\label{inv_pr}
If the family of the random walks $S_\delta ^x$ satisfies the following condition, 
then we say that $S_\delta ^x$ satisfies invariance principle:

\noindent
For any compact set $K \subset \D$ and  $\epsilon >0$,
there is some $\delta _0 >0$ such that the following holds.
Let $Z^x$ be a two-dimensional Brownian motion started at $x$ stopped on exiting $\D$.
For any $0<\delta < \delta _0$ and $x \in K \cap V_\delta$,
there exists a coupling of  $S^x_\delta$ and $Z^x$ satisfying
\[\P(d_{\U}( S^x_\delta[0,\tau _\D] , Z^x)> \epsilon )< \epsilon .\]
\end{defi}

In view of the Skorokhod representation theorem
the above condition is equivalent to holding that $S^x_\delta  $ weakly converges to $Z^x$ uniformly for all $x \in K$.

In \cite{YY}  (Lemma 1.2) the following result is proved. 
\begin{prop}\label{est_mart}
Suppose that $S_\delta ^x$ satisfies invariance principle.
For any positive constants $r$, $\e$ and $\eta <1$, there exists some $\delta _0 >0$ such that
for all $0<\delta <\delta_0$  the following holds.
Let $D \subset \D$, let $p \in V_\delta (D)$ be such that $\rad _p (D) \geq r$,
and let $\psi  : D\rightarrow \D$ be a conformal map with $\psi (p)=0$.
Let $y \in V_\delta (D) $ be such that $|\psi (y)|< 1-\eta$ and let $a \in \partial _{out}V_\delta (D)$.
Then,
\[\left| \frac{H_0^{(\delta )}(y,a)}{H_0^{(\delta )}(p,a)}-\frac{K_{\D}(\psi(y),\psi(a))}{K_{\D}(\psi(p),\psi(a))}\right|<\epsilon, \]
where  $K_{\D}$ stands for the Poisson kernel of $\D$.
\end{prop}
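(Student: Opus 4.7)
The core observation is that while $H_0^{(\delta)}(y,a)$ and $H_0^{(\delta)}(p,a)$ are themselves $O(\delta)$ quantities with no continuum limit, their ratio should stabilize: the passage of the walk through any fixed mesoscopic neighborhood of $a$ is governed by a distribution which, conditioned on entering that neighborhood, does not depend on the starting point $x\in\{y,p\}$, and this local contribution cancels in the ratio. The plan is therefore to reduce the statement to a comparison of macroscopic harmonic measures, and to handle the latter by the invariance principle together with conformal invariance of Brownian motion.

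Concretely, I would choose a mesoscopic scale $\rho$ with $\delta\ll\rho\ll r$ and decompose by the strong Markov property at the first meeting of $V_\delta\cap\partial B_\rho(a)$ (where $B_\rho(a):=\{z:|z-a|<\rho\}$):
\begin{equation*}
H_0^{(\delta)}(x,a) \;=\; \sum_{z\in V_\delta \cap \partial B_\rho(a)\cap D} Q^{(\delta)}_x(z)\,H^{(\delta)}_{B_\rho(a)\cap D}(z,a),
\end{equation*}
where $Q^{(\delta)}_x(z)$ is the $S^x_\delta$-probability of first meeting $\partial B_\rho(a)$ at $z$ before leaving $D$. The second factor depends only on the walk inside $B_\rho(a)\cap D$, and in particular is independent of $x$, so in the ratio it cancels. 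By the invariance principle, for fixed $\rho$ and $\delta\downarrow 0$, the entrance law $Q^{(\delta)}_x$ converges to the corresponding Brownian exit law, and the ratio reduces to $\omega_y^D(A_\rho(a))/\omega_p^D(A_\rho(a))+o(1)$ with $A_\rho(a)\subset \partial D\cap\overline{B_\rho(a)}$ an appropriate boundary piece. Conformal invariance of Brownian motion under $\psi:D\to\D$ and the Poisson formula yield
\begin{equation*}
\omega_x^D(A)\;=\;\int_{\psi(A)} K_\D(\psi(x),\zeta)\,|d\zeta|,\qquad A\subset\partial D,
\end{equation*}
and letting $\rho\downarrow 0$ after $\delta\downarrow 0$, the arcs $\psi(A_\rho(a))$ shrink to $\psi(a)\in\partial\D$; continuity of $K_\D(w,\cdot)$ in its second argument then produces the claimed limit $K_\D(\psi(y),\psi(a))/K_\D(\psi(p),\psi(a))$.

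The main obstacle is obtaining all errors \emph{uniformly} in $D$, $p$, $y$, and $a$. The hypothesis $\rad_p(D)\geq r$ together with the Koebe one-quarter and distortion theorems provides a uniform lower bound on $|\psi'(p)|$ and uniform two-sided control of $\psi$ on $\{z\in D:|\psi(z)|\leq 1-\eta/2\}$, which is what allows the invariance-principle estimates on the $D$-side to be transferred uniformly to the $\D$-side. The delicate step is coordinating the three scales: $\rho$ must be small enough that $K_\D(\psi(y),\cdot)/K_\D(\psi(p),\cdot)$ varies by less than $\epsilon/3$ over $\psi(A_\rho(a))$ (this is uniform because $|\psi(y)|\leq 1-\eta$ keeps the ratio Lipschitz in a neighborhood of $\partial\D$), yet large enough that the invariance-principle coupling error on $D\setminus B_\rho(a)$ is negligible compared to $\omega_p^D(A_\rho(a))$. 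A diagonal choice $\rho=\rho(\delta)$ with $\rho\to 0$ and $\rho/\delta\to\infty$ closes the argument; the denominator $K_\D(\psi(p),\psi(a))$ is automatically bounded below because $\psi(p)=0$, which prevents any small-denominator catastrophe as $a$ varies.
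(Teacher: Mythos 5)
The paper gives no proof of this proposition; it is quoted directly as Lemma 1.2 of Yadin--Yehudayoff [YY], so your sketch has to be judged on its own merits rather than against an argument in the paper. The overall outline you propose --- decompose the hitting probability at a mesoscopic shell $\partial B_\rho(a)$, use the invariance principle to transfer the entrance law on that shell to its Brownian counterpart, use conformal invariance and the Poisson formula on the continuum side, and close with a diagonal choice $\rho=\rho(\delta)$ --- is a reasonable route to the statement, and the observation that $\rad_p(D)\ge r$ together with Koebe distortion supplies the required uniformity over $(D,p,y,a)$ is also to the point.

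There is, however, a real gap at the pivotal ``cancellation'' step. You write that the local factor $H^{(\delta)}_{B_\rho(a)\cap D}(z,a)$ ``does not depend on the starting point $x$'' and therefore ``cancels in the ratio.'' The independence from $x$ is true but does not by itself produce any cancellation: this factor sits inside a sum over $z\in\partial B_\rho(a)\cap D$ with weights $Q^{(\delta)}_x(z)$, and those weights certainly do depend on $x$, so the numerator and denominator are two different weighted averages of the same kernel $H^{(\delta)}_{B_\rho(a)\cap D}(\cdot,a)$, not a common factor times something. Your parenthetical claim --- that the entrance distribution on $\partial B_\rho(a)$, conditioned on reaching the shell, is the same for $x=y$ and $x=p$ --- is exactly what would make the sum ratio collapse to $\sum_z Q^{(\delta)}_y(z)/\sum_z Q^{(\delta)}_p(z)$, but it is not obviously true and is in fact the crux of the lemma. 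What it amounts to is that $z\mapsto Q_y(z)/Q_p(z)$ is nearly constant over $\partial B_\rho(a)\cap D$, a boundary-Harnack type statement. You do invoke the Brownian version of this later (continuity of $K_\D(\psi(y),\cdot)/K_\D(\psi(p),\cdot)$ near $\psi(a)$, uniformly in $|\psi(y)|\le 1-\eta$), but only as an input to the $\rho\downarrow 0$ step; it needs to be promoted to the hypothesis that justifies discarding the local kernel in the first place, and the discrete-to-continuum transfer must then be made quantitative enough to dominate the shrinking quantity $\omega_p^D(A_\rho(a))$ as $\rho\downarrow 0$. With that repair the remainder of the sketch appears sound.
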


The Poisson kernel of $\H$ is given by
\[K_{\H}(u,v):=-\frac{1}{\pi}\Im \left ( \frac{1}{u-v}\right )=\frac{1}{\pi} \frac{\Im\, u}{|u-v|^2} .\]
The result above may be translated in terms of $K_{\H}$. For our purpose we apply it in a rather trivial fashion. Let 
\begin{cor}\label{est_mart1}
Suppose that $S_\delta ^x$ satisfies invariance principle.
For any  constants $r>0$, $\epsilon >0$, $\eta >0$ and $\la>1$, there exists some $\delta _0 >0$ such that
for all $0<\delta <\delta_0$  the following holds.
Let $D \subset \D$, let $p \in D$ be such that $\rad _p (D) \geq r$,
and let $\phi  : D\rightarrow \H$ be a conformal map with $\phi (p)=i$. 
Let $y,w \in V_\delta (D) $ be such that $\Im\, \phi (y) > \eta,\Im\, \phi (w) > \eta$ 
and $|\phi(y)|<\lambda,|\phi(w)|<\lambda$.
Then, for all $a \in \partial _{out}V_\delta (D)$
\[\left| \frac{H_0^{(\delta )}(w,a)}{H_0^{(\delta )}(y,a)}-\frac{K_{\H}(\phi(w),\phi(a))}{K_{\H}(\phi(y),\phi(a))}\right|<\epsilon.\]
\end{cor}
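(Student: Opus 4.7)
My plan is to reduce the corollary to Proposition \ref{est_mart} via the Cayley transform. Let $M(z) = (z-i)/(z+i)$; this maps $\H$ conformally onto $\D$ and sends $i$ to $0$. Set $\psi := M\circ\phi : D\to \D$, so that $\psi(p)=0$. From the identity $1-|M(z)|^2 = 4\Im z/|z+i|^2$ one gets that $\Im z > \eta$ and $|z| < \lambda$ together imply $|M(z)| \leq 1-\eta'$ for some $\eta' = \eta'(\eta,\lambda) \in (0,1)$; hence $|\psi(y)|,|\psi(w)| \leq 1-\eta'$.

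Since Proposition \ref{est_mart} requires a lattice reference point, I replace $p$ by some $p_\delta \in V_\delta(D)$ with $|p_\delta - p|\leq r/2$, which exists for all sufficiently small $\delta$ because $\rad_p(D) \geq r$; then $\rad_{p_\delta}(D) \geq r/2$. I also post-compose $\phi$ with a real-affine M\"obius map $A$ of $\H$ so that the resulting $\phi_\delta := A\circ\phi$ satisfies $\phi_\delta(p_\delta)=i$, and set $\psi_\delta := M\circ\phi_\delta$. The Koebe distortion theorem ensures that $\phi_\delta$ and $\phi$ differ only slightly near $p$, so the hypotheses $\Im\phi_\delta(y),\Im\phi_\delta(w) > \eta/2$ and $|\phi_\delta(y)|,|\phi_\delta(w)| < 2\lambda$ still hold for $\delta$ small, giving $|\psi_\delta(y)|\vee|\psi_\delta(w)| \leq 1-\eta''$ for some $\eta''=\eta''(\eta,\lambda)$. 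The key point is that the Poisson kernel ratio $K_\H(\phi(w),\phi(a))/K_\H(\phi(y),\phi(a))$ is unchanged by this $\H$-side renormalization: $K_\H(A(u),A(v))=K_\H(u,v)/|A'(v)|$ for $v\in\R$, and the factor $|A'(\phi(a))|$ cancels.

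Applying Proposition \ref{est_mart} to both $w$ and $y$ with reference $p_\delta$ and conformal map $\psi_\delta$ gives, for any preassigned $\epsilon_1 >0$ and all small enough $\delta$,
\begin{align*}
\left|\frac{H_0^{(\delta)}(w,a)}{H_0^{(\delta)}(p_\delta,a)} - \frac{K_\D(\psi_\delta(w),\psi_\delta(a))}{K_\D(0,\psi_\delta(a))}\right| &< \epsilon_1, \\
\left|\frac{H_0^{(\delta)}(y,a)}{H_0^{(\delta)}(p_\delta,a)} - \frac{K_\D(\psi_\delta(y),\psi_\delta(a))}{K_\D(0,\psi_\delta(a))}\right| &< \epsilon_1.
\end{align*}
From $|\psi_\delta(y)|\leq 1-\eta''$ and the explicit formula for the disc Poisson kernel, $K_\D(\psi_\delta(y),\psi_\delta(a))\geq c(\eta'')>0$ uniformly in $\psi_\delta(a)\in\partial\D$. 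Dividing the two displays and using this lower bound converts them into a single additive bound
$$\left|\frac{H_0^{(\delta)}(w,a)}{H_0^{(\delta)}(y,a)} - \frac{K_\D(\psi_\delta(w),\psi_\delta(a))}{K_\D(\psi_\delta(y),\psi_\delta(a))}\right| < C(\eta'')\,\epsilon_1.$$

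Finally, conformal invariance of the Poisson kernel ratio under $M$ (from $K_\H(u,v)=|M'(v)|K_\D(M(u),M(v))$ for $v\in\R$, the factor $|M'(\phi_\delta(a))|$ cancels) gives
$$\frac{K_\D(\psi_\delta(w),\psi_\delta(a))}{K_\D(\psi_\delta(y),\psi_\delta(a))} = \frac{K_\H(\phi_\delta(w),\phi_\delta(a))}{K_\H(\phi_\delta(y),\phi_\delta(a))} = \frac{K_\H(\phi(w),\phi(a))}{K_\H(\phi(y),\phi(a))},$$
so choosing $\epsilon_1 = \epsilon/C(\eta'')$ completes the proof. The only step that needs genuine care is the division, which is routine once the uniform lower bound on the denominator is in hand.
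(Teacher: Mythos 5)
Your proof is correct and follows essentially the same route as the paper's: replace $p$ by a nearby lattice point $p_\delta$, apply Proposition~\ref{est_mart} twice with reference point $p_\delta$, divide the two estimates, and invoke conformal invariance of the Poisson-kernel ratio under the Cayley transform to return to $K_{\H}$. You simply spell out more of the technical bookkeeping (renormalizing $\phi$ to $\phi_\delta$ so the reference maps to $i$, the uniform lower bound on the denominator to make the division valid) that the paper's two-line proof leaves implicit.
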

\begin{proof}
Let $p _\delta \in V_\delta (D)$ be a nearest point of $p$.
Applying Proposition \ref{est_mart} with $p=p_\delta$,
\begin{align*}
\frac{H_0^{(\delta )}(w, a)}{H_0^{(\delta )}(y, a)}
&=\frac{H_0^{(\delta )}(w, a)/H_0^{(\delta )}(p_\delta, a)}
{H_0^{(\delta )}(y, a)/H_0^{(\delta )}(p_\delta, a)}\\
&=\frac{K_{\D}(\psi(w),\psi(a))}{K_{\D}(\psi(y),\psi(a))}+O(\epsilon ).
\end{align*}
Because the ratio of the Poisson kernel is conformal invariance, we find
\[\frac{K_{\D}(\psi(w),\psi(a))}{K_{\D}(\psi(y),\psi(a))}
=\frac{K_{\H}(\phi(w),\phi(a))}{K_{\H}(\phi(y),\phi(a))}.\]
This completes the proof.
\end{proof}

\v2
 Here we present the following trivial lemma for convenience of a later citation.
\begin{lem}\label{edge}
Suppose that  $S_\delta ^x$ satisfy invariance principle.
For any $\epsilon >0$, there exists $\delta _0$ such that for all $0< \delta < \delta _0$,
the length of edges of $G_\delta$ in $\D$ is bounded above by $\epsilon$.
\end{lem}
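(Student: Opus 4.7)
The plan is to proceed by contradiction, combining the invariance principle with the standard exit estimate for planar Brownian motion in a long thin strip. Suppose the conclusion fails for some $\epsilon > 0$: there exist $\delta_n \downarrow 0$ and edges $(u_n, v_n) \in E$ with $x_n := \delta_n u_n \in \D$ and $\delta_n|u_n - v_n| \geq \epsilon$. After passing to a subsequence, $x_n \to x_* \in \overline{\D}$ and every $x_n$ lies in a fixed compact set $K \subset \D$ (the case $x_* \in \partial\D$ is reduced to the interior case by a cosmetic enlargement of the reference disc); write $p_n := p(u_n, v_n) > 0$.

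Fix a tolerance $\eta > 0$ to be chosen small. For $n$ large enough that $\delta_n < \delta_0(\eta)$, apply the invariance principle with $K$ and $\eta$ to obtain a coupling of $S^{x_n}_{\delta_n}$ with $Z^{x_n}$ satisfying $\P(d_\U > \eta) < \eta$. On the event $A_n$, of probability $p_n$, that the walk's first step is $\delta_n v_n$, the linearly interpolated walk trace parametrises the straight segment $[x_n, \delta_n v_n]$ --- of length at least $\epsilon$ --- during its first discrete time-step. Because every admissible reparametrisation $\alpha : [0,1] \to [0,1]$ in the definition of $d_\U$ is a monotone bijection with $\alpha(0) = 0$, on the joint event $A_n \cap \{d_\U \leq \eta\}$, which has probability at least $p_n - \eta$, some initial subarc of the Brownian trace must travel from $x_n$ to within $\eta$ of $\delta_n v_n$ while remaining inside the tube
\[T_n = \{z \in \C : \dist(z, [x_n, \delta_n v_n]) \leq \eta\}.\]

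Rotating coordinates so that this segment lies along the real axis, the imaginary part of $Z^{x_n} - x_n$ is an independent one-dimensional Brownian motion which must stay in $(-\eta,\eta)$ long enough for the real part to cover at least $\epsilon - \eta$, an event of probability at most $Ce^{-c\epsilon/\eta}$ with absolute constants $c, C > 0$. Combining yields $p_n \leq \eta + Ce^{-c\epsilon/\eta}$ for every $\eta > 0$ and all sufficiently large $n$, hence $p_n \to 0$. The contradiction is then furnished by the uniform-in-$x$ statement of the invariance principle: the walk's first-exit distribution from the disc $B(x_n, \epsilon/4)$ carries a point mass of size $p_n$ at the specific boundary direction of $\delta_n v_n$, while the coupled Brownian first-exit distribution on the same disc is rotation-invariant and has no atoms, so the persistence of any positive atomic mass along the subsequence is incompatible with the coupling. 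The main obstacle is the passage from $d_\U$-closeness of the \emph{entire} stopped trajectories to the \emph{initial-subarc} containment of the Brownian trace inside $T_n$; once this geometric reduction is justified, the strip-exit estimate and the atomic-versus-continuous contradiction are classical.
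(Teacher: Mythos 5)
Your approach is genuinely different from the paper's, and it contains a gap that your probabilistic estimate cannot close. The paper's proof is a one-line topological argument that makes no reference to transition probabilities at all: if $G_\delta$ had an edge $e$ of length at least $\epsilon$ meeting $\D$, then by the planarity hypothesis (condition 1 in Subsection 4.1) every other edge meets $e$ at most at its two endpoints, so the piecewise-linear trace of \emph{every} realisation of $S_\delta^x$ can cross $e$ only through those endpoints. A Brownian path started nearby crosses the interior of $e$ transversally with probability one, so the walk trace can never be $d_\U$-close to it, contradicting the invariance principle. The crucial point is that this is a deterministic constraint on every walk trajectory, completely independent of how small the transition weight on $e$ is.

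Your argument instead controls the walk's behaviour conditionally on stepping along the long edge, and the strip-exit estimate therefore only establishes that the one-step transition probability $p_n := p(u_n, v_n)$ along the hypothetical long edge tends to zero. That is not a contradiction: nothing a priori forbids a planar irreducible graph whose distant edges grow long (in the unscaled graph) while carrying vanishing weight, and your closing ``atom'' argument does not repair this. First, you have just shown that the atom's mass $p_n$ vanishes, so there is no ``persistence of positive atomic mass'' left to contradict. Second, the coupling of Definition \ref{inv_pr} compares the whole walk stopped on exiting $\D$ with Brownian motion stopped on exiting $\D$, in the metric $d_\U$; $d_\U$-closeness of the full curves does not directly control first-exit distributions from an interior disc $B(x_n,\epsilon/4)$, which depend on how the two paths are parametrised on the way there. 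What you are missing is exactly the planarity obstruction used in the paper: a long edge is a topological barrier for every walk path started anywhere near it, not merely for the rare paths that traverse it, and it is this unconditional constraint that produces the contradiction.
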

\begin{proof}
Suppose that this Lemma is not true.
Then, there exists  $\epsilon >0$ such that for some sufficiently small $\delta$,
there exists an edge $e$ of $G_\de$ such that the length of $e$ is bounded below by $\epsilon$.
Since $G_\delta$ is planar graph, $S_\delta ^x$ can not cross the edge $e$, so that it cannot behave as a Brownian path and   the  invariance principle fails to hold.
\end{proof}

\section{Scaling limit}
\subsection{Convergence with respect to the driving function}

Let $D \subsetneq \C$ be a simply connected domain and 
 $a,b$ two distinct points on $\partial D$.
We say that $\partial D$ is locally analytic at $z \in \partial D$
if there exists a one-to-one analytic function $f :\D \rightarrow \C$ with $f(0)=z$ and 
$f(\D) \cap D= f(\{ w \in \D : \Im\, w >0 \})$.
Let $G=(V,E)$ be a planar irreducible graph and  
$S_\delta ^x $  a natural random walk on $G_ \delta $ started at $x$ (see Section 4 for detailed description).
Let $\Gamma _\delta ^{a,b}$ be  
a natural random walk on $G_\delta$ started at $a_\delta$
and stopped on exiting $D$ and conditioned to hit $\partial D$ at  $b_\delta$,
where $a _\delta$ is a point of $\partial _{in}V_\delta (D)$  close to $a$
and $b _\delta$ is a point of $\partial _{out}V_\delta (D)$  close to $b$
such that there exists a path on $G_\delta $ connecting $a_\delta$ and $b_\delta$ in $D$.
If $\partial D$ is locally analytic at $a$ and $b$,
we can choose  such $a _\delta$ and  $b _\delta$.
Let $\gamma _\delta^{a,b}$ be the loop erasure of  $\Gamma _\delta ^{a,b}$.

\begin{theo} \label{main1}
Suppose that $S_\delta ^x$ satisfy invariance principle.
Let $D$ be a bounded simply connected domain 
and $a,b$ be distinct points on $\partial D$. 
Suppose that $\partial D$ is locally analytic at $a$ and $b$.
Let $\phi : D \rightarrow \H$ be a conformal map with $\phi (a) =0, \phi (b)=\infty$.
Then, $\phi \circ (\gamma _\delta^{b,a})^-$
 converges weakly to the chordal SLE$_2$ curve in $\H$ as $\delta \rightarrow 0$
with respect to the driving function.
\end{theo}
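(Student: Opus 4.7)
The plan is to parametrize $\eta_\delta := \phi\circ(\gamma_\delta^{b,a})^-$ by half-plane capacity in $\H$, read off its Loewner driving function $U_\delta$, establish tightness of $\{U_\delta\}$ in the uniform topology on compact time intervals, and identify each subsequential weak limit as $\sqrt{2}B_t$. By Proposition \ref{rev_LERW} applied under the conditioning on the exit point (which is $\mathcal{F}_0$-measurable and therefore preserves the identity of laws), the path $\gamma := (\gamma_\delta^{b,a})^-$ is equidistributed with the loop erasure of the time-reversed walk grown from $a_\delta$ toward $v_0 := b_\delta$, and hence inherits the domain Markov property of Proposition \ref{LE_Markov} and the martingale family $M_j^{(w)} := H_j(w,\gamma_j)/H_j(b_\delta,\gamma_j)$ of Proposition \ref{mart_LERW} for every interior test vertex $w\in V_\delta(D)$.

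Tightness of $\{U_\delta\}$ follows by combining Proposition \ref{es_hull} with modulus-of-continuity estimates supplied by the invariance principle and Lemma \ref{edge}: with high probability each segment of $\phi\circ\gamma$ contributes only a negligible amount of half-plane capacity, so the oscillation of $U_\delta$ over short windows is controlled by the diameter of the corresponding piece of $\eta_\delta$. To identify a subsequential weak limit $U$, fix $s_0\geq 0$ and let $p_\delta\in V_\delta(D_{j_0})$ (with $j_0$ a discrete time approximating $s_0$) be a nearest vertex to the reference point $p$ determined by $\phi^\delta_{s_0}(p)=U_\delta(s_0)+i$, where $\phi_t^\delta$ is the Loewner chain of $\eta_\delta$. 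On a short window $[s_0,s_0+\tau]$ on which Lemma \ref{refpt} guarantees that $p_\delta$ stays uniformly in the bulk of $D_{j}$, the quotient $R_j := M_j^{(w)}/M_j^{(p_\delta)} = H_j(w,\gamma_j)/H_j(p_\delta,\gamma_j)$ is approximated, via Corollary \ref{est_mart1}, by the continuous Poisson kernel ratio $K_\H(\phi^\delta_t(w),U_\delta(t))/K_\H(\phi^\delta_t(p_\delta),U_\delta(t))$; the denominator is essentially the constant $1/\pi$ by Lemma \ref{refpt}, so $R_j$ tracks the renormalized observable $\pi K_\H(\phi^\delta_t(w),U_\delta(t))$. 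Since $M_j^{(p_\delta)}$ is itself a martingale (Proposition \ref{mart_LERW} applied conditionally on $\mathcal{F}_{j_0}$ to the $\mathcal{F}_{j_0}$-measurable choice $p_\delta$) and stays close to $1/\pi$ on the window, the martingale property of $M_j^{(w)}$ transfers, in the limit $\delta\to 0$, to the continuous Poisson kernel observable $K_\H(\phi_t(w),U(t))$. Applying It\^o's formula to $f(X,Y)=Y/(X^2+Y^2)$ with $\phi_t(w)-U(t)=X_t+iY_t$ and the Loewner equation $d\phi_t(w)=2\,dt/(\phi_t(w)-U(t))$, the vanishing of the drift forces the combination $(3\kappa-6)X_t^2+(2-\kappa)Y_t^2\equiv 0$ along the flow, whence $\kappa=2$; specializing $w$ over a dense subset of $D$ and invoking L\'evy's characterization identifies $U(t)=\sqrt{2}B_t$ for a standard Brownian motion $B$.

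The main obstacles will be (i) controlling the error in Corollary \ref{est_mart1} uniformly for a walk that is started at (or conditioned to exit through) a boundary vertex of $D$ -- precisely the delicate probability estimate that the introduction flags and that the paper defers to Section 6; and (ii) handling the change of reference point $p_\delta$ as the curve grows, which requires patching the single-window analysis above through successive short windows and is what the introduction refers to when it says the normalization ``must change as the loop erasure grows.'' Once these points are in hand, the identification of any subsequential limit of $\{U_\delta\}$ as $\sqrt{2}B$ upgrades tightness to the asserted weak convergence of $\phi\circ(\gamma_\delta^{b,a})^-$ to chordal SLE$_2$ with respect to the driving function.
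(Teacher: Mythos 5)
Your proposal follows the paper's overall architecture -- the martingale observable from Proposition \ref{mart_LERW}, the Poisson kernel approximation of Corollary \ref{est_mart1}, patching short windows whose reference point is updated via $\phi_{t}(p(t))=U(t)+i$ and Lemma \ref{refpt}/\ref{conf}, and an identification of the driving term as $\sqrt2 B$. You also correctly flag the two genuinely hard points (the boundary-started conditional law, handled in the paper's Section 6 via Lemma \ref{est_hit}, and the evolving normalization across windows). The main substantive divergence is in \emph{how} the observable is normalized, and your version leaves a gap there.

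The paper multiplies $H_j(w,\gamma_j)/H_j(b,\gamma_j)$ by the \emph{constant} $H_0^{(\delta)}(b;A)$ (see (\ref{M_j})); this keeps $M_j$ an exact martingale, and the whole of Step~1 of Lemma \ref{fund_lem}, underpinned by Lemma \ref{est_hit}, is devoted to rewriting $H_j(b,\gamma_j)/H_0(b;A)$ as a sum over bulk points so that Corollary \ref{est_mart1} applies. You instead form the quotient $R_j=M_j^{(w)}/M_j^{(p_\delta)}=H_j(w,\gamma_j)/H_j(p_\delta,\gamma_j)$, which cancels the awkward denominator $H_j(b,\gamma_j)$ immediately, but at the cost that $R_j$ is a ratio of two martingales and is therefore \emph{not} a martingale. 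The sentence ``since $M_j^{(p_\delta)}$ \ldots stays close to $1/\pi$ on the window, the martingale property of $M_j^{(w)}$ transfers'' is not a valid inference as written: closeness of $M_j^{(p_\delta)}$ to a constant controls its variance but not the covariance between $R_j$ and the increments of $M_j^{(p_\delta)}$, and it is precisely that covariance which contaminates $\E[R_m-R_{j_0}]$. The argument can in fact be rescued, but only because $p_\delta$ is chosen so that $\phi_{j_0}(p_\delta)=U_{j_0}+i$ is a \emph{stationary} point of $u\mapsto K_\H(\cdot,u)$: this makes the fluctuation of $M_j^{(p_\delta)}$ over the window of order $\epsilon^2$ rather than $\epsilon$, so the covariance with $R_j$ is $O(\epsilon^3)$ as needed. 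You should state and prove this covariance bound; without it the quotient trick is only heuristic. Note also that asserting $M_j^{(p_\delta)}$ itself is ``close to $1/\pi$'' is incorrect without the paper's multiplicative normalization $H_0^{(\delta)}(b;A)$; only the \emph{continuum} Poisson kernel value $K_\H(\phi_t(p_\delta),U_t)$ is close to $1/\pi$, while the discrete ratio $H_j(p_\delta,\gamma_j)/H_j(b,\gamma_j)$ carries an unnormalized boundary factor.

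Two smaller remarks. First, your final identification uses It\^o's formula on $K_\H$ directly, whereas the paper isolates the first and second moment estimates by Taylor expansion and two choices of $w$ (end of Lemma \ref{fund_lem}), then invokes Skorokhod embedding; both routes are standard and your It\^o drift computation giving $(3\kappa-6)X^2+(2-\kappa)Y^2$ is correct. Second, your invocation of Proposition \ref{rev_LERW} ``under conditioning on the exit point'' is fine, but be explicit that after conditioning, the conditioned walk is again a Markov chain with a modified transition kernel, so Propositions \ref{LE_Markov} and \ref{mart_LERW} apply verbatim -- this is used silently throughout Section 5.
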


\v2\n
{\sc Remark.} \ 
In order to assure the uniformity of  invariance principle so imposed in Definition \ref{inv_pr}
 it suffices to suppose it only for the walk starting at a point, e.g., the origin as is shown in \cite{U}.    
\v2


We abbreviate $ (\gamma _\delta^{b,a})^-=\gamma =(\gamma _0, \gamma _1, \dots , \gamma _l)$.
By Proposition \ref{rev_LERW}, 
$\gamma$ has the same distribution as  the loop erasure of the time-reversal of $\Gamma _\delta^{b,a}$.
Hence, it is possible for $\gamma$ to use results in Section 4.
Let $\mathcal{F}_j$ be a filtration generated by $\gamma [0,j]$.
We may also think that  $\gamma [0,j]$ is the simple curve that is a linear interpolation.

Let $U(t)$ be a driving function of $\phi (\gamma )$ and
$g_t$ be a Loewner chain driven by $U(t)$.
Let $t_j:=\frac{1}{2} \hcap \phi (\gamma [0,j] )$ and 
$$U_j:=U(t_j),~~\phi _j:=g_{t_j} \circ \phi~~\mbox{and}~~D_j:=D \setminus \gamma [0,j].$$
Let $p_j :=\phi _j ^{-1}(i+U_j)$.
$p_j$ plays the role of a reference point, an \lq origin', of $D_j$.
In radial case, such a  point  is fixed at the origin.
But in chordal case, $p_j$ must be  moved with  $j$, so that there remains sufficient space around $p_j$ in  $D_j$,
a sequence of reducing  domains formed by encroachment of  $\ga$ into $D$. 
(Cf. \cite{ScSh}).

We use the martingale introduced in  Proposition \ref{mart_LERW},  as in \cite{LSW} and \cite{YY}.   
But we need to normalize it  appropriately. 
We denote by  $S^b_\delta$ a natural random walk on $G_\delta$ started at $b_\delta$.
Let $A:= \phi ^{-1}( [-1,1]) $  and the normalization is made   by multiplying
 $\P ( S_\delta ^{b}(\tau _{D}) \in  A)$,  which we name $M_j$:
\beqn\label{M_j}
M_j:=\frac{H_j^{(\delta)} (w, \gamma _j)}{H_j^{(\delta)} (b, \gamma _j)} H_0^{(\delta )}(b;A),
\eeqn
(for any $\delta >0$ and $w \in V_\delta (D)$), where we write $H_0^{(\delta )}(b;A):=\P ( S_\delta ^{b}(\tau _{D}) \in  A )$.

Let $D \subsetneq \C$ be a simply connected domain, $a,b$  two distinct points on $\partial D$ 
and $\phi: D\to \H$  a conformal map with  $\phi(a)=0, \phi(b)=\infty$ as before.
Let $p=\phi^{-1}(i)$. Put $\Psi (z)=(z-i)/(z+i)$.
 Define  $\psi := \Psi \circ \phi: D\to \D$, which is  a conformal map with $\psi(b)=1, \psi(p)=0,\psi (a)=-1$.
Let  $\mathcal D =\mathcal D(r, R,\eta)$ be the collection of all quadruplets $(D,a,b,p)$ such that $\rad_p(D)\geq r$ and $D\subset R\D$ and $\psi^{-1}$ has analytic extension in $\{z\in \C: |z-1|<\eta\}$.  

In the rest of this section let $r, R$ and $\eta$ be arbitrarily fixed positive constants 
and suppose  the same hypothesis  of Theorem \ref{main1} to be valid.   
We write $\mathcal D$ for $\mathcal D(r, R,\eta)$  and consider $(D,a, b, p)\in \mathcal D $.
For dealing with the martingale observable $M_j$ defined above the following lemma plays a significant role 
and  $\mathcal D(r, R,\eta)$  is introduced as a class for which the estimates  given there is valid uniformly.

\begin{lem}\label{est_hit}
There exists a number $\lambda _0=\lambda _0(\eta )>1/2$ such that for  any $\e>0$ and $\lambda >\lambda _0$,
 there exists  numbers  $\delta _0>0$   and $\alpha \in (0,1/2)$  
 such that if $(D,a,b,p)\in \mathcal {D} (r,R,\eta)$,
  $0 <\delta < \delta _ 0$ and $D' = D \setminus \phi^{-1}(\{z:|z| <2\lambda \})$,  then
\begin{equation}
\P \left( \Im\, \phi (S^b_\delta (\tau _{D'})) < \alpha \lambda
\ |\   S^b _\delta ( \tau _{D}) \in \n A \right) <\epsilon, \label{e1}
\end{equation}
and, if $\diam (\phi (\gamma [0, j]) )< 1$, then
\begin{equation}
\P \left ( \Im\, \phi (S^b_\delta (\tau _{D'})) < \alpha \lambda
\  |\  S^b _\delta ( \tau _{D_j}) = \gamma _j \right ) <\epsilon. \label{e2}
\end{equation}

\end{lem}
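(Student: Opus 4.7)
The plan is to apply a Bayes-rule decomposition, approximate the hitting probabilities by the classical Poisson kernel via Corollary \ref{est_mart1}, and reduce the problem to a direct Brownian-motion computation in $\H$. I describe the plan for (\ref{e1}) in detail; (\ref{e2}) is entirely analogous with $D$ replaced by $D_j$, $A$ replaced by the singleton $\{\ga_j\}$, and $\phi$ replaced by $\phi_j=g_{t_j}\circ\phi$. Since $A\subset\phi^{-1}(\{|z|<1\})\subset\phi^{-1}(\{|z|<2\la\})$, on the event $\{S^b_\de(\t_D)\in A\}$ the walk must first enter $\phi^{-1}(\{|z|<2\la\})$, so the strong Markov property at $\t_{D'}$ gives
\[
\P\bigl(\Im\,\phi(S^b_\de(\t_{D'}))<\a\la\,\big|\,S^b_\de(\t_D)\in A\bigr)
=\frac{\sum_{w\in E_\a}\P(S^b_\de(\t_{D'})=w)\,H^{(\de)}_0(w,A)}{\sum_{w}\P(S^b_\de(\t_{D'})=w)\,H^{(\de)}_0(w,A)},
\]
where the sum runs over the inner boundary vertices $w$ of $D'$ (essentially on the curve $\phi^{-1}(\{|z|=2\la\})$ up to a displacement that vanishes with $\de$ by Lemma \ref{edge}) and $E_\a$ restricts to those with $\Im\,\phi(w)<\a\la$.

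Next I would estimate $H^{(\de)}_0(w,A)$ for $w$ on the inner boundary. Fixing a reference vertex $y_0$ near $\phi^{-1}(2\la i)$ and summing Corollary \ref{est_mart1} over $a\in A$, one finds that $H^{(\de)}_0(w,A)/H^{(\de)}_0(y_0,A)$ is close to $\omega_\H(\phi(w),[-1,1])/\omega_\H(\phi(y_0),[-1,1])$, which for $|\phi(w)|=2\la$ and $\la$ large equals $\Im\,\phi(w)/\la$ up to a factor $1+O(1/\la^2)$. For $w$ closer to the real axis than the threshold of Corollary \ref{est_mart1}, a barrier argument via the invariance principle (the probability that a walk from $w$ reaches height $\eta'\la$ before returning to the real axis is $\lesssim \Im\,\phi(w)/(\eta'\la)$) yields the matching upper bound. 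Together these give $H^{(\de)}_0(w,A)\asymp \Im\,\phi(w)\cdot H^{(\de)}_0(y_0,A)/\la$ uniformly in $w$ on the inner boundary.

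The third step, which I expect to be the main obstacle, is to understand $w\mapsto \P(S^b_\de(\t_{D'})=w)$ for the walk started at the boundary vertex $b_\de$. The condition in $\mathcal D(r,R,\eta)$ that $\psi^{-1}$ extends analytically across $1=\psi(b)$ permits a conformal boundary-straightening in a fixed neighborhood of $b$; combined with the invariance principle applied after the walk has moved a fixed small distance into $D$, one expects weak convergence of the exit distribution on the inner boundary to the corresponding Brownian exit distribution, pulled back via $\phi^{-1}$. In $\H$-coordinates the latter is the exit measure on the half-circle $\{|z|=2\la,\,\Im\,z>0\}$ of Brownian motion in $\H\setminus\overline{B(0,2\la)}$ started from $iy$ with $y\to\infty$; via the Joukowski map $\zeta=z+(2\la)^2/z$ it transforms to the uniform measure on $[-4\la,4\la]$, so in the angular parameter $z=2\la e^{i\th}$ its density is proportional to $\sin\th\,d\th$.

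Combining the last two steps, the numerator and denominator of the conditional probability reduce, up to vanishing errors, to integrals of $\sin\th\cdot\sin\th=\sin^2\th$ against $d\th$ over the corresponding $\th$-ranges. Since $\int_{\{\sin\th<\a/2\}}\sin^2\th\,d\th=O(\a^3)$ while $\int_0^\pi\sin^2\th\,d\th=\pi/2$, choosing $\a$ small enough (depending only on $\e$) yields (\ref{e1}). For (\ref{e2}) the same plan applies: the Poisson kernel $K_\H(\cdot,U_j)$ at the bounded point $U_j$ (bounded because $|U_j|<1/\e$) replaces the integrated kernel, and the hypothesis $\diam(\phi(\ga[0,j]))<1$ ensures $|\phi_j(w)-\phi(w)|=O(1/\la)$ on $\{|\phi(w)|\ge 2\la\}$, so the large-$|\phi|$ behavior of the walk is unperturbed. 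The central difficulty is therefore Step 3, namely upgrading the interior invariance principle of Definition \ref{inv_pr} to a statement valid starting from the boundary vertex $b_\de$, uniformly over the class $\mathcal D(r,R,\eta)$; this is precisely the delicate probability estimate flagged in the introduction and deferred to Section 6.
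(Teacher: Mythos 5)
Your plan has a genuine gap at the step you yourself flag as the central difficulty. You propose to control the exit distribution $w \mapsto \P(S^b_\delta(\tau_{D'})=w)$ on the separating arc by upgrading the interior invariance principle to a weak-convergence statement for the walk started at the boundary vertex $b_\delta$, and you defer this to ``Section 6.'' But the present lemma \emph{is} what Section 6 proves, so that deferral is circular; and in fact the paper's Section 6 never establishes (and never needs) such a boundary exit-distribution convergence. Instead, Proposition \ref{est1} -- the heart of the argument, credited to \cite{U} -- proceeds by a monotonicity and last-exit-decomposition trick that sidesteps your Step~3 entirely: working in $\D$, one builds an auxiliary region $\Omega=B\cup C\cup U$, picks an interior reference vertex $v^*$, and uses the strong Markov property at the \emph{last} visit to $B$ to write $q:=\P(S^{v^*}_\delta(\tau_\Omega)\in J_\alpha^+)=\sum_{y\in C_\delta}G_\Omega(v^*,y)\,\P(S^y_\delta(\tau_U)\in D)\,p(y)$ with $p(y)=\P(S^y_\delta(\tau_U)\in J_\alpha^+\mid S^y_\delta(\tau_U)\in D)$. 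The key inequality $p(y)\geq p(b)$ for $y\in C_\delta$ -- obtained because the conditioned walk from $y$ must cross a path joining $b_\delta$ to $J_\alpha^+$ -- gives $q\geq p(b)\,\P(S^{v^*}_\delta(\tau_\Omega)\in D)$, and then the invariance principle is invoked only at the \emph{interior} point $v^*$ to bound $q$ above and $\P(S^{v^*}_\delta(\tau_\Omega)\in D)$ below. This yields an upper bound on $p(b)$ without ever computing the exit law from $b_\delta$.

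Beyond that gap, your Steps 1, 2, and 4 (the first-entry decomposition at $\tau_{D'}$, the Poisson-kernel estimate for $H^{(\delta)}_0(w,A)$ via Corollary \ref{est_mart1} plus a barrier argument, and the $\sin^2\theta$ computation) are sound in spirit and would combine as you describe \emph{if} Step~3 were available; but Step~3 is not an incidental technicality, it is the entire content of the estimate and would require substantially more than Definition \ref{inv_pr} provides (which is uniform only over compact subsets of $\D$). The paper also splits the conditioning differently: rather than directly handling the single-point conditioning $\{S^b_\delta(\tau_{D_j})=\gamma_j\}$, it first proves the unconditioned estimate (Proposition \ref{est1}), then passes to conditioning on the event $\{S^b_\delta(\tau_D)\in A\}$ (Lemma \ref{est2}) via a bounded Radon--Nikodym factor, and finally to single-point conditioning (Lemma \ref{est3}) using Lemma 5.8 of \cite{YY} for the comparison constant. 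You should incorporate this staged reduction and replace your Step~3 with the last-exit/monotonicity argument.
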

\v2

The proof of Lemma \ref{est_hit}  is involved and postponed to the end of Section 6.

For any $\epsilon >0$,
let 
\[
m:=\min \{ j \geq 1 : t_j \geq \epsilon ^2  \text{ or }  |U_j-U_0| \geq \epsilon \}.
\]

\begin{lem}\label{fund_lem}
There exists a constant $C>0$  and a number $\epsilon _0>0$ such that for each positive $\epsilon < \epsilon _0$,
there exists $\delta _0 >0$ such that
if $(D,a,b,p) \in \mathcal D(r,R,\eta)$ and  $0 <\delta < \delta _ 0$, then
\[
|\E [ U_m-U_0]| \leq C \epsilon ^3,
\]
and
\[
|\E [(U_m-U_0)^2-2t_m ]| \leq C \epsilon ^3.
\]
(Although $U_0=0$, we write $U_0$ in the formulae above to indicate how they show be when the starting position
 $U_0=\gamma _0$ is not mapped to the origin by $\phi$.)
\end{lem}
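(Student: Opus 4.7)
The plan is to exploit the exact discrete martingale property of $M_j$ (Proposition \ref{mart_LERW}, normalized as in (\ref{M_j})), combined with the Poisson-kernel approximation of Corollary \ref{est_mart1}, to mimic on the lattice the It\^o-type cancellation that identifies $\kappa=2$. A short computation shows that under the chordal Loewner flow driven by $\sqrt{\kappa}\,B_t$, the function $K_\H(g_t(z),U_t)=\Im\,g_t(z)/[\pi|g_t(z)-U_t|^2]$ is a local martingale precisely when $\kappa=2$; our task is to reproduce this identity discretely and solve for the two desired expectations using $\E[M_m-M_0]=0$ (valid by bounded optional stopping since $M_j=O(1)$ on $\{j\le m\}$).

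The decisive first step is to show that there is a constant $C=C_{\delta,D,b,A}$, depending on $\delta,D,b,A$ but \emph{not} on $j$ or on the random path $\gamma$, such that for any vertex $w\in V_\delta(D)$ with $\phi(w)$ in an ``interior nice region'' (say $\Im\phi(w)\ge\eta$ and $|\phi(w)|\le\lambda$),
\[
M_j \;=\; C\,K_\H(\phi_j(w),U_j)\,\bigl(1+o_\delta(1)\bigr), \qquad j\le m.
\]
For the numerator $H_j^{(\delta)}(w,\gamma_j)$, Corollary \ref{est_mart1} with reference point $p_j=\phi_j^{-1}(i+U_j)$ immediately gives $H_j^{(\delta)}(w,\gamma_j)=\pi K_\H(\phi_j(w),U_j)\,H_j^{(\delta)}(p_j,\gamma_j)(1+o_\delta(1))$. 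The denominator $H_j^{(\delta)}(b,\gamma_j)$ is delicate because $\Im\phi(b_\delta)$ is not bounded below; to handle it I would decompose the walk from $b$ at its exit time $\tau_{D'}$ from $D'=D\setminus\phi^{-1}(\{|z|<2\lambda\})$. Lemma \ref{est_hit} guarantees that, conditionally on hitting $\gamma_j$ (or on hitting $A$), the walk exits $D'$ at a point with $\Im\phi\ge\alpha\lambda$, placing it in the interior nice region where Corollary \ref{est_mart1} applies. Since $\gamma[0,j]$ has $\phi$-diameter $O(\epsilon)$ by Proposition \ref{es_hull} and is therefore confined near $a$, the Loewner map satisfies $\phi_j(y)-\phi(y)=O(t_j/\lambda)$ on the circle $|\phi|=2\lambda$, so the $j$-dependence cancels in the ratio $H_0^{(\delta)}(b;A)/H_j^{(\delta)}(b,\gamma_j)$ modulo $o_\delta(1)$; this is the role of the normalization factor $H_0^{(\delta)}(b;A)$ in~(\ref{M_j}).

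Granted this approximation, $\E[M_m]=M_0$ becomes
\[
\E\bigl[K_\H(\phi_m(w),U_m)-K_\H(\phi(w),0)\bigr]\;=\;O(\epsilon^3),
\]
after choosing $\delta_0(\epsilon)$ small enough to absorb the approximation error. A third-order Taylor expansion of $K_\H$ around $(\phi(w),0)$, using $|U_m|\le\epsilon$, $t_m\le\epsilon^2$, and the Loewner expansion $\phi_m(w)-\phi(w)=2t_m/\phi(w)+O(\epsilon^3)$, yields (writing $\phi(w)=x_0+iy_0$)
\[
\frac{y_0(3x_0^2-y_0^2)}{\pi(x_0^2+y_0^2)^3}\,\E\bigl[U_m^2-2t_m\bigr]\;+\;\frac{2x_0y_0}{\pi(x_0^2+y_0^2)^2}\,\E[U_m]\;=\;O(\epsilon^3),
\]
and the cancellation pattern here is exactly the one that forced $\kappa=2$ in the It\^o computation. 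Choosing $w_1\in V_\delta(D)$ with $\phi(w_1)\approx i$ (so $(x_0,y_0)=(0,1)$) kills the $\E[U_m]$-coefficient and delivers $\E[U_m^2-2t_m]=O(\epsilon^3)$. Choosing a second vertex $w_2$ with $\phi(w_2)\approx 1+i$ yields an independent linear relation from which, combined with the first, one extracts $\E[U_m]=O(\epsilon^3)$. Such vertices exist in $V_\delta(D)$ for small $\delta$ by Lemma \ref{edge}.

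The main obstacle will be the $j$-uniform approximation in the first step: verifying that $C_{\delta,D,b,A}$ is genuinely independent of $j$ requires careful bookkeeping of the errors from Corollary \ref{est_mart1} at each of the possibly many exit points of the $b$-walk from $D'$, of the Loewner perturbation of $\phi_j$ on the circle $|\phi|=2\lambda$, and of their interaction with Lemma \ref{est_hit}'s conditional estimate. Uniformity across $\mathcal D(r,R,\eta)$ rests on the local analyticity of $\partial D$ at $b$ built into the definition of $\mathcal D$, which prevents the constants in Lemma \ref{est_hit} from degenerating as $(D,a,b,p)$ varies.
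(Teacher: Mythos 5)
Your proposal reproduces the paper's proof almost step for step: you normalize $M_j$ by decomposing the $b$-walk at its exit from $D'=D\setminus\phi^{-1}(\{|z|<2\lambda\})$ and control the contribution near $\partial D$ via Lemma \ref{est_hit}, identify $M_j\approx\frac{2}{\pi}K_\H(\phi_j(w),U_j)$ for $j=0,m$ through Corollary \ref{est_mart1} (with the uniformity over $\mathcal D(r,R,\eta)$ and the reference points $p_j$ supplied by Lemmas \ref{refpt} and \ref{conf}), and then combine $\E[M_m]=M_0$ with the Loewner expansion $\phi_m(w)-\phi(w)=2t_m/\phi(w)+O(\e^3)$, a Taylor expansion, and two test vertices $w$ to solve for the two expectations. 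The only cosmetic difference is your second test point $\phi(w_2)\approx 1+i$, whereas the paper takes $\phi(w_2)\approx e^{i\pi/3}$ so that the $\E[(U_m-U_0)^2-2t_m]$ coefficient vanishes and $\E[U_m-U_0]$ is read off directly; both choices work.
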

\begin{proof}  This proof is broken into four steps. It consists of certain  estimations of  the harmonic functions 
that constitutes the martingale observable defined by (\ref{M_j}).
\v2\n
{\it Step 1}. In this step we derive an expression, given in (\ref{u1}) below,  of the ratio 
\[ {H_j^{(\delta)} (b, \gamma _j)}  /{H_0^{(\delta )} (b; A)}.\]
We take sufficiently small $\epsilon _0 >0$, which we need in this proof.
Given $0<\epsilon <\epsilon _0$ we take a number  $\lambda =1/\e^3$ that will be specified shortly.    
Let $ D^\prime := D \setminus \phi ^{-1} (B(U_0 , 2\la) \cap \H )$ (note that $B(U_0 , 2\la) =\{z: |z|<2\la)$).
In the following we consider for $j=0, 1, 2, \ldots$, although we apply the resulting relation only for  $j=0,m$,
\[
H_j^{(\delta )} (b, \gamma _j )
=\sum _{y \in V_\delta (D)} \P ( S^b_\delta (\tau _{D^\prime})=y, S^b _\delta ( \tau _{D_j}) = \gamma _j)
\]
We split the sum on the right-hand side  into two  parts according as  $y$ is close to the boundary of $D$ or not.
The part of those  $y$ which are  close to the boundary must be negligible. 

Proposition \ref{es_hull} and the definition of $m$ imply that $\diam (\phi  (\gamma [0,m-1])) =O(\epsilon)$.
By Lemma \ref{edge}, the harmonic measure  from $p$ of  $\gamma [m-1,m]$ in $D_{m}$ is $O(\epsilon)$
for sufficiently small $\delta>0$.
By conformal invariance of harmonic measure, the harmonic measure from $\phi_{m-1} (p) $ of 
$\phi _{m-1} (\gamma [m-1,m])$ in $\H \setminus \phi _{m-1} (\gamma [m-1,m])$ is $O(\epsilon)$.
This implies that $\diam (\phi _{m-1} (\gamma [m-1,m])) =O(\epsilon)$, and we have
\begin{equation}
\diam (\phi (\gamma [0, m]) )= O(\epsilon ). \label{est_diam}
\end{equation}
By (\ref{est_diam}) and Lemma \ref{est_hit}, we can choose $\alpha=\a(\e) < 1/2$  so that for  all sufficiently small $\delta >0$,
for $j=0,m$,
\[
\P \left( \Im\, \phi (S^b_\delta (\tau _{D^\prime})) < \alpha \lambda
\ |\  S^b _\delta ( \tau _{D_j}) = \gamma _j \right) =O(\epsilon ^{3}).
\]
This implies
\begin{align*}
\frac
{\P ( \Im\, \phi (S^b_\delta (\tau _{D^\prime})) < \alpha \la
 , S^b _\delta ( \tau _{D_j}) = \gamma _j)}
 {\P ( \Im\, \phi (S^b_\delta (\tau _{D^\prime})) \geq \alpha \la
 , S^b _\delta ( \tau _{D_j}) = \gamma _j)}
& =\frac
{\P ( \Im \,\phi (S^b_\delta (\tau _{D^\prime})) < \alpha \la
\  |\  S^b _\delta ( \tau _{D_j}) = \gamma _j)}
 {\P ( \Im\, \phi (S^b_\delta (\tau _{D^\prime})) \geq \alpha \la
\  |\  S^b _\delta ( \tau _{D_j}) = \gamma _j)} \\
 &=O(\epsilon ^3).
\end{align*}
Therefore, 
\begin{equation}
H_j^{(\delta )} (b, \gamma _j )
=(1+O(\epsilon ^3))\sum _{\substack {y \in V_\delta (D) \\ \Im\, \phi (y) \geq \alpha \la}}
 \P ( S^b_\delta (\tau _{D^\prime})=y, S^b _\delta ( \tau _{D_j}) = \gamma _j).
 \label{u10}
\end{equation}
By  strong Markov property,
\begin{align*}
\frac{\P ( S^b_\delta (\tau _{D^\prime})=y, S^b _\delta ( \tau _{D_j}) = \gamma _j)}{H_0^{(\delta )} (b; A)}
&=\frac{\P ( S^b_\delta (\tau _{D^\prime})=y)\P ( S^y _\delta ( \tau _{D_j}) = \gamma _j)}
{\P ( S^b _\delta (\tau _D) \in A)}\\
&=\frac{\P ( S^b_\delta (\tau _{D^\prime})=y)\P ( S^y _\delta (\tau _D) \in A)}
{\P ( S^b _\delta (\tau _D) \in A)}
\cdot
\frac{\P ( S^y _\delta (\tau _{D_j}) =\gamma _j)}
{\P ( S^y _\delta (\tau _D) \in A)}.
\end{align*}
Therefore, (\ref{u10}) implies
\begin{align}
\frac{H_j^{(\delta)} (b, \gamma _j)}{H_0^{(\delta )} (b; A)}
&=(1+O(\epsilon ^3))\sum _ {\substack{y \in V_\delta (D) \\ \Im\, \phi (y) \geq \alpha \la}}
\P (S_\delta ^{b} (\tau _{D^\prime})=y \ |\  S_\delta ^{b} (\tau _D) \in A)
\cdot \frac{H_j^{(\delta)} (y, \gamma _j)}{H_0^{(\delta )} (y;A)}.\label{u1}
\end{align}

\v2\n
{\it Step 2}. 
Let $w \in V_\delta $  and  $y \in V_\delta (D)$ satisfy 
\begin{equation} \label{u00}
\Im\, \phi (w) \geq \frac{1}{2} , \  |\phi (w)-U_0| \leq 3;  \quad
\Im\, \phi (y) \geq  \alpha \lambda ,  \  \lambda \leq |\phi (y)-U_0 | \leq  2\lambda. 
\end{equation}
Applying  Corollary \ref{est_mart1} to the domain $D$ with a reference point $p$,
\begin{align}
\frac{H_0^{(\delta )}(w, \gamma_0)}{H_0^{(\delta )}(y, \gamma _0)}
=\frac{\Im\, \phi (w) / |\phi (w)-U_0|^2}{\Im\, \phi (y) / |\phi (y)-U_0|^2} +O(\epsilon ^3),\label{u2}
\end{align}
and  the assumed invariance principle implies
\begin{equation}
 H_0^{(\delta )}(y;A) =\frac{1}{\pi} \int _{- 1} ^{1} \frac{\Im\, \phi (y)} {|\phi (y) - x |^2} dx+O(\e^3\a/\la)
 \label{u3}
\end{equation}
since $|\phi(y)- U_0|^2/ \Im\, \phi(y) \leq 2\la/\a$ (recall $\a/\la$ must get small together with  $\e$).
The relations (\ref{u00}), (\ref{u2}) and (\ref{u3}) together imply 
\begin{align}
\frac{H_0^{(\delta )}(w, \gamma_0)}{H_0^{(\delta )}(y, \gamma _0)} H_0^{(\delta )}(y; A) 
&=\frac{\Im\, \phi (w)}{ \pi |\phi (w)-U_0|^2}  \int _{-1}^{1}
 \frac{|\phi (y)-U_0|^2}{|\phi (y)-x|^2} dx +O( \epsilon ^3)\notag \\
&=\frac{2}{\pi}\frac{\Im\, \phi (w)}{|\phi (w)-U_0|^2}+O(\epsilon ^3). \label{u4}
\end{align}
From  (\ref{u1}) and (\ref{u4}) we infer that
\[
\frac1{M_0} = \frac{H_0^{(\delta)} (b, \gamma _0)}{H_0^{(\delta )} (b; A) H_0^{(\delta )} (w; \gamma _0)}
=(1+O(\e^3))\sum _ {\substack{y \in V_\delta (D) \\ \Im\, \phi (y) \geq \alpha \lambda}} p(y)
\bigg / \left[ \frac{2}{\pi}\frac{\Im\, \phi (w)}{|\phi (w)-U_0|^2}+O(\epsilon ^3) \right] ,
\]
where $p(y) =\P (S_\delta ^{b} (\tau _{D^\prime})=y \ |\  S_\delta ^{b} (\tau _D) \in A)$. 
In view of Lemma \ref{est_hit}, we can suppose 
\begin{equation}
\sum _ {\substack{y \in V_\delta (D) \\ \Im\, \phi (y) \geq \alpha \lambda}} p(y)
=1+O(\epsilon ^3), \label{K2}
\end{equation}
by replacing $\alpha$ by smaller one if necessary.
Since $ \Im\, \phi(w)/|\phi(w)- U_0 |^2$ is bounded by a universal constant,  we now conclude
\begin{align}
M_0&=\frac{2}{\pi}\frac{\Im\, \phi (w)}{|\phi (w)-U_0|^2}+O(\epsilon ^3) \notag \\
&=\frac{2}{\pi}\,\Im\, \left (\frac{-1}{\phi (w)-U_0} \right )+O(\epsilon^3 ) .\label{u5}
\end{align}

\v2\n
{\it Step 3}. We derive an analogous formula for  $M_m$.
Lemma \ref{es_hull} and (\ref{est_diam}) imply
\begin{equation}
 t_m=O(\epsilon ^2), \quad |U(s)-U(0)|=O(\epsilon ) \quad \text{for} \quad \forall s \in [0, t_m]. \label{u7}
\end{equation}
The Loewner equation (\ref{cle}) shows that
\begin{equation}
|g_t (z)-z| \leq t \cdot \sup _{0 \leq s \leq t} \frac{2}{|g_s  (z)-U(s)|},  \label{ineq}
\end{equation}
and, observing the imaginary part of the Loewner equation,
\begin{equation}
1\geq \frac{\Im\,  g_t(z)}{\Im\, z} 
\geq \exp \left(- t \cdot \sup _{0 \leq s \leq t} \frac{2}{|g_s(z)-U(s)|^2} \right). \label{ineq2}
\end{equation}
We also find $\frac{d}{dt} \Im\, g_t(z) \geq -2/ \Im\, g_t (z)$, and
this implies $\frac{d}{dt} (\Im\,  g_t (z) )^2 \geq -4$.
By integrating this relation over $[0, t]$, we get
$ (\Im\,  g_t (z) )^2 \geq (\Im\, z )^2 -4t$.
Since $t_m=O(\epsilon ^2)$, we have
$\Im \, g_s \circ \phi (w) \geq 1/4 $ for $0 \leq s \leq t_m$.
Therefore, (\ref{ineq}) gives
\begin{equation}
|g_s \circ \phi (w) -\phi (w) | =O(\epsilon ^2) \quad \text{for} \quad \forall s \in [0, t_m]. \label{est_w}
\end{equation}

Let $\sigma:=\inf \{ t\geq 0 : |g_t (z)-U(t)| \leq \lambda/2 \}$.
Using (\ref{ineq}), we get $|g_\sigma (z)-z| \leq 4\sigma /\lambda$ and
\[
|z-U(0)| \leq \frac{4\sigma }{\lambda } +\frac{\lambda }{2}+|U(\sigma )-U(0)|.
\]
Thus,  if $|z-U(0)|> \lambda$, then $\sigma >t_m$. This implies
$|g_s \circ \phi (y)-U(s)| \geq \lambda /2$ for $0 \leq s \leq t_m$.
Therefore,
(\ref{ineq}) and (\ref{ineq2}) lead to
\begin{equation}\label{K}
|\phi _m (y)-\phi (y)|  = O(\epsilon ^3) \quad \text{and} \quad \frac{\Im\, \phi_m(y)}{\Im\, \phi(y)} =  1 +  O(\e^3).
\end{equation}
(\ref{est_w}) and (\ref{K}) imply
\[
\Im\, \phi _m (w) \geq \frac{1}{3},\  |\phi _m(w)-U_m|\leq 4; \quad
\Im\, \phi _m(y) \geq \frac{\alpha \lambda }{2},\  \frac{\lambda} {2}  \leq |\phi _m(y)-U_m| \leq  3\lambda.
\]
and it follows from Lemma \ref{refpt} that $\rad_{p_m}(D_m) \geq r^\prime$   for some $r^\prime>0$.
Therefore, we can apply Corollary \ref{est_mart1} to the domain $D_m$ with the reference point $p_m$,
and hence the relation (\ref{u3}) implies
\begin{align*}
\frac{H_m^{(\delta )}(w, \gamma_m)}{H_m^{(\delta )}(y, \gamma _m)}H_0^{(\delta )}(y; A) 
=&\frac{\Im\, \phi _m(w)}{ \pi |\phi _m(w)-U_m|^2} 
\int _{-1}^{1}\frac{\Im\, \phi (y)}{\Im\, \phi _m (y)} \cdot \frac{|\phi _m (y) -U_m|^2}{|\phi(y)-x|^2} dx \\
& +O(\epsilon ^3 ).
\end{align*}
Thus, from (\ref{u1}), (\ref{K2})  and (\ref{K}) we get 
\begin{equation}
M_m=\frac{2}{\pi}\Im\, \left (\frac{-1}{ \phi _m(w)-U_m}\right )+O(\epsilon ^3). \label{u6}
\end{equation}

\v2\n
{\it Step 4}. 
Proposition \ref{mart_LERW} implies that $M_j$ is a martingale.
Because $m$ is a bounded stopping time, 
\begin{equation*}
 \E [M_m-M_0]=0.  
\end{equation*}
Thus, (\ref{u5}) and (\ref{u6}) lead to
\begin{equation} \label{u11}
\E \left [\Im\, \left ( \frac{1}{ \phi _m(w)-U_m}\right )
-\Im\, \left ( \frac{1}{\phi (w)-U_0} \right ) \right ]=O(\epsilon ^3).
\end{equation}
(\ref{u7}) and (\ref{est_w}) imply
\[
 \frac{1}{g_s \circ \phi (w)-U(s)}=\frac{1}{\phi (w)-U_0}+O(\epsilon)
 \quad \text{for} \quad \forall s \in [0, t_m].
\]
By integrating this relation over $[0, t_m]$,
Loewner equation and (\ref{u7}) show that
\begin{equation}
\phi _m(w)=\phi (w) + \frac{2}{\phi (w)-U_0}\cdot t_m +O(\epsilon^3 ) .\label{u8}
\end{equation}
Let $f(u,v)=1/(u-v)$.
Using (\ref{u7}) and (\ref{u8}),
we Taylor-expand $f(\phi _m (w), U_m)-f(\phi (w), U_0)$ with respect to $\phi _m (w)-\phi (w)$
and $U_m-U_0$, up to $O(\epsilon ^3)$.
Observing imaginary part of this Taylor expansion, from (\ref{u11}) and (\ref{u8})  we get
\begin{equation}
\Im\, \left ( \frac {1}{(\phi (w)-U_0)^2} \right )\E [U_m-U_0]+
\Im\, \left ( \frac {1}{(\phi (w)-U_0)^3} \right )\E [(U_m-U_0)^2-2t_m]=O(\epsilon ^3). \label{u9}
\end{equation}
Now, we consider two different choices of $w$ under the constraint $w \in V_\delta$ such that $\Im\, \phi (w) \geq \frac{1}{2}
,|\phi (w)| \leq 3$ .
By the Koebe distortion theorem we can find $w$ satisfying $\phi (w)-U_0=i+O(\epsilon^3)$.
Then, (\ref{u9}) implies 
\begin{equation}
\E [(U_m-U_0)^2-2t_m]=O(\epsilon ^3).
\end{equation}
Similarly, we can find $w$ satisfying $\phi (w)-U_0=e^{i\frac{\pi}{3}}+O(\epsilon^3)$ and we get
\begin{equation}
\E [U_m-U_0]=O(\epsilon ^3).
\end{equation}
\end{proof}

\v2
As in Subsection 2.3, let $D(t)=D\setminus \gamma [0,t]$, $\phi _t=g_t \circ \phi$ and $p(t)=\phi _t^{-1}(i+U(t))$.
\begin{lem}\label{conf}
Let $T>1$ and $\e>0$, and,  given a  quadruplet  $(D, a, b, p) \in \mathcal D$,
put $\tilde T= \sup \{ t \in [0,T]: |U(t)|<1/\epsilon \}$. 
Then, there exists $\eta_1=\eta_1(T, \epsilon )>0$ and $r_1=r_1(T, \e)>0$  
such that $(D(t), \gamma (t), b, p(t) ) \in \mathcal D(r_1, R, \eta_1)$ for all $t<\tilde {T}$.
\end{lem}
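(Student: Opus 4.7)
The plan is to verify the three defining conditions of $\mathcal D(r_1,R,\eta_1)$ for the quadruplet $(D(t),\gamma(t),b,p(t))$. The inclusion $D(t)\subset R\D$ is trivial since $D(t)\subset D\subset R\D$, and the radius lower bound follows immediately from Lemma \ref{refpt}, which yields $\rad_{p(t)}(D(t))\geq c(T,\epsilon)\,\rad_{p(0)}(D)\geq c(T,\epsilon)r$; so I would just set $r_1:=c(T,\epsilon)r$.

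The real content is the quantitative analyticity condition at $b$. First I would identify the normalized Riemann map for $(D(t),\gamma(t),b,p(t))$: the unique conformal map $D(t)\to\H$ sending $(\gamma(t),b,p(t))\mapsto (0,\infty,i)$ is $\phi_{D(t)}:=\phi_t-U(t)=g_t\circ\phi-U(t)$, hence
\[
\psi_{D(t)}=\tilde\Psi_t\circ g_t\circ\phi,\qquad \tilde\Psi_t(z):=\Psi(z-U(t)),
\]
so $\psi_{D(t)}^{-1}=\phi^{-1}\circ g_t^{-1}\circ\tilde\Psi_t^{-1}$ with $\tilde\Psi_t^{-1}(w)=\Psi^{-1}(w)+U(t)$. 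The hypothesis $(D,a,b,p)\in\mathcal D$ says $\psi^{-1}=\phi^{-1}\circ\Psi^{-1}$ extends analytically on $B(1,\eta)$, which by the formula $\Psi^{-1}(w)=i(1+w)/(1-w)$ is equivalent to $\phi^{-1}$ extending analytically on the Riemann-sphere neighborhood $\{z\in\C:|z+i|>2/\eta\}\cup\{\infty\}$ of $\infty$.

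The whole problem then reduces to the following purely deterministic M\"obius-and-expansion question: choose $\eta_1=\eta_1(T,\epsilon)>0$ small enough that $g_t^{-1}\circ\tilde\Psi_t^{-1}$ maps $B(1,\eta_1)$ into $\{|z+i|>2/\eta\}\cup\{\infty\}$ uniformly in $t<\tilde T$. Writing $w=1+\zeta$ a direct computation gives $\tilde\Psi_t^{-1}(w)+i=-2i/\zeta+U(t)$, so $|\tilde\Psi_t^{-1}(w)+i|\geq 2/|\zeta|-|U(t)|\geq 2/\eta_1-1/\epsilon$. Proposition \ref{es_hull} yields $\diam K_t\leq C(\sqrt T+1/\epsilon)$ on $\{t<\tilde T\}$, so the standard expansion $g_t^{-1}(z)=z-2t/z+O(1/|z|^2)$ holds with an $O$-constant depending only on $T,\epsilon$ once $|z|$ exceeds a fixed multiple of $\diam K_t$. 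Combining these two estimates gives $|g_t^{-1}(\tilde\Psi_t^{-1}(w))+i|\geq 2/\eta$ as soon as $\eta_1$ is small enough in terms of $\eta,T,\epsilon$, and then each factor of $\phi^{-1}\circ g_t^{-1}\circ\tilde\Psi_t^{-1}$ is analytic on the relevant set, so $\psi_{D(t)}^{-1}$ extends analytically on $B(1,\eta_1)$.

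The main obstacle is bookkeeping uniformity in $t<\tilde T$, but this is precisely what the definition of $\tilde T$ is engineered to supply: the bounds $t\leq T$ and $|U(t)|<1/\epsilon$ force every implicit constant (the diameter of $K_t$, the $O$-term in the expansion of $g_t^{-1}$, and the truncation in the estimate of $|\tilde\Psi_t^{-1}(w)+i|$) to depend only on $T$ and $\epsilon$.
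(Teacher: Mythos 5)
Your proof is correct, and it handles the easy parts ($D(t)\subset R\D$ and $r_1$ via Lemma~\ref{refpt}) exactly as the paper does. For the analyticity of $\psi_t^{-1}$ near $1$, however, you take a genuinely different route. The paper transports everything to the disc: it sets $h_t=\Psi\circ g_t^*\circ\Psi^{-1}$, observes $h_t(1)=1$ and $h_t'(1)=1$, chooses $\eta_1$ so small that $\psi(\gamma[0,t])$ misses $B(1,4\eta_1)$, extends $h_t$ across $\partial\D$ by reflection, and then applies Koebe's $1/4$ theorem to $h_t$ on $B(1,4\eta_1)$ to conclude that $h_t^{-1}$, hence $\psi_t^{-1}=\psi^{-1}\circ h_t^{-1}$, is analytic on $B(1,\eta_1)$. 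You instead stay in half-plane coordinates, factor $\psi_t^{-1}=\phi^{-1}\circ g_t^{-1}\circ\tilde\Psi_t^{-1}$, translate the hypothesis into an analytic extension of $\phi^{-1}$ to a Riemann-sphere disc $\{|z+i|>2/\eta\}\cup\{\infty\}$, and show that $g_t^{-1}\circ\tilde\Psi_t^{-1}$ pushes $B(1,\eta_1)$ into that disc, using Proposition~\ref{es_hull} to bound $\diam K_t$ and the hydrodynamic normalization of $g_t^{-1}$ at $\infty$. Both arguments rest on the same two pillars, namely that $|U|$ and $\diam K_t$ are controlled by $T,\epsilon$ on $\{t<\tilde T\}$ and that the problem lives near the fixed point $b\leftrightarrow\infty\leftrightarrow1$; the paper's version via Koebe is a bit slicker in that it avoids chaining three maps, while yours is more explicit and makes the M\"obius bookkeeping visible. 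One small refinement: rather than invoking an expansion $g_t^{-1}(z)=z-2t/z+O(1/|z|^2)$ with a $t$-dependent error, it is cleaner and fully sufficient to use the uniform bound $\sup_{z\in\H}|g_t^{-1}(z)-z|\leq 3\,\rad(K_t)\leq 3\,\diam K_t$, which gives $|g_t^{-1}(\tilde\Psi_t^{-1}(w))+i|\geq 2/\eta_1-1/\epsilon-3\,\diam K_t$ directly without worrying about where the expansion is valid.
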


\begin{proof} 
Let $g_t^* (z):=g_t (z)-U(t)$.
Put $\Psi (z)=(z-i)/(z+i)$.
Define the conformal map $h_t: \D\setminus \psi(\gamma [0,t]) \rightarrow \D$  
by
\[ h_t (z):= \Psi \circ g_t^* \circ \Psi ^{-1} (z) .\]
Put $\psi_t (z) := h_t \circ \psi (z)$ so that  $\psi _t : D(t) \rightarrow \D$ 
is a conformal map with $ \psi_t (\gamma (t) ) =-1, \psi _t (b) =1, \psi _t (p(t))=0$.
Clearly $\partial (\D\setminus \psi(\gamma [0,t])) $ is locally analytic at $1$
and $h_t(1)=1$.
On using  the Loewner equation we infer that  $g_t^\prime (z)=1$  as  $z\to \infty$, which implies   $h_t ^\prime (1)=1$. 
Now we can choose a positive $\eta_1<\eta/4$ such that if $t<\tilde {T}$,
then $\psi (\gamma [0,t])$ does not intersect with $B:=\{z\in \C:|z-1|<4\eta_1\}$.
Thus, $h_t$ is analytically extended to $B$ for $t<\tilde {T}$,
so that  in view of Koebe's 1/4 theorem $h_t ^{-1}$ has  an  analytic extension in 
$\{ z \in \C : |z-1|<\eta_1\}$ for $t<\tilde T$.
Since $\psi_t^{-1}=\psi ^{-1} \circ h_t^{-1}$ and $\psi^{-1}$ is analytic on $B$,
$\psi _t^{-1}$ has  an  analytic extension in 
$\{ z \in \C : |z-1|<\eta_1\}$ for $t<\tilde T$.
The existence of $r_1$ is deduced from  Lemma \ref{refpt}.
Thus the assertion of the lemma has been proved. 
\end{proof}

\v2
\noindent
{\it Proof of Theorem \ref{main1}}. 
Having proved  Lemma \ref{fund_lem}  it is easy to adapt the arguments given in \cite{ScSh}.
Let $D$ be as in the theorem and take $R$ so that $D\subset R\D$.
Let $r:=\rad _p (D)$.
From our hypothesis of local analyticity of $\partial D$ at $b$,
the function  $\psi$  has an analytic extension in a neighborhood of  $b$.
Thus, we can choose $\eta >0$ such that $\psi^{-1}$ is analytic in $\{ z \in \C :|z-1|<\eta \}$,
hence $(D, a, b, p ) \in \mathcal D(r, R, \eta )$.

Let $T>1$ and $\epsilon _1 >0$  and put  $\tilde T = \sup\{t\in [0,T]:|U(t)|<1/\epsilon _1\}$. 
Let $\epsilon >0$ be small enough.
Let $m_0=0$ and define $m_n$ inductively by
\[
m_n := \min \{j > m_{n-1} : t_j- t_{m_{n-1}} \geq \epsilon^2 \text{ or } |U_j - U_{m_{n-1}}|\geq \epsilon \}.
\]
Let $N:= \max \{ n \in \N : t_{m_n} < \tilde{T} \}$.
By Lemma \ref{conf}, we can take some positive constants $r_1 $ and $\eta _1$ 
such that $(D_{m_n}, \gamma _{m_n}, b, p_{m_n}) \in \mathcal D(r_1, R, \eta _1)$ for any $n \leq N$.

By the Markov property stated in Proposition \ref{LE_Markov},  we find that 
$\gamma ^{(t_{m_n})}(\cdot )=\gamma ( t_{m_n}+ \cdot )$ is the same distribution as 
the time-reversal of the loop erasure of 
a natural random walk on $G_\delta$ started at $b_\delta$ and stopped on exiting $D_{m_n}$ 
and conditioned to hit $\partial D_{m_n}$ at $\gamma _{m_n}$.
We apply Lemma  \ref{fund_lem} with $(D_{m_n}, \gamma _{m_n}, b, p_{m_n})$ for any $n \leq N$.
Then, we deduce from the fact stated at (\ref{U2}) that 
there exists $\delta _0=\delta _0 (\epsilon, \epsilon _1, T )>0$ such that if $\delta < \delta _0$, then for any $n \leq N$
\[
\E \Big[ U_{m_{n+1}}-U_{m_n}\ \Big|\ \gamma [0, m_n] \Big] =O(\epsilon ^3) ,
\]
and
\[
\E \Big [(U_{m_{n+1}} -  U_{m_n})^2\ \Big| \ \gamma [0, m_n] \Big] 
=\E \Big[2( t_{m_{n+1}} -t_{m_n}) \ \Big|\  \gamma [0, m_n] \Big] +O(\epsilon ^3).
\]

The rest of proof of Theorem \ref{main1} is the proof that
$U(t)$ weakly converges to $\sqrt {2} B(t)$ uniformly on $[0,T]$ as $\delta \rightarrow 0$, 
where $B(t)$ is a one-dimensional standard Brownian motion with $B(0)=0$.
This proof follows from the above estimate and the Skorokhod embedding theorem as in \cite{LSW} and \cite{ScSh}.
(See Subsection 3.3 in \cite{LSW} and Corollary 4.3 in \cite{ScSh}.)
\qed

\subsection{Convergence with respect to the metric $d_\U$}
Now, we assume that there exists an invariant measure $\pi$ for a natural random walk $S(\cdot)$ on $G$
such that $0<\pi (v) < \infty$ for any $v \in V$.
Let $p(u,v)$ be the transition probability for $S(\cdot)$.
We consider the dual walk $S^*(\cdot)$.
The transition probability of $S^*(\cdot)$, denoted by $p^*(u,v)$, is given by
\[ p^*(u,v):=\frac{\pi (v)} {\pi (u) } p(v,u) .\]
Then, the dual walk $S^*(\cdot)$ is a natural random walk on some other planar irreducible graph.
As in the case of $S(\cdot)$, we define $(S^*)_\delta ^x , (\Gamma^*)_\delta^{a,b},(\gamma^*)_\delta^{a,b}$
corresponding to $S^*(\cdot)$.
The following lemma is a relation between the time-reversal and the dual walk.
\begin{prop}\label{dual}
Suppose that there exists an invariant measure $\pi$ for a natural random walk $S(\cdot)$ on $G$
such that $0<\pi (v) < \infty$ for any $v \in V$.
Then, the time-reversal of $\Gamma _\delta^{a,b}$ has the same distribution as $(\Gamma^*)_\delta^{b,a}$.
Similarly, the time-reversal of $\gamma _\delta^{a,b}$ has the same distribution as $(\gamma^*)_\delta^{b,a}$.
\end{prop}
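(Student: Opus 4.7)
The plan is to handle the two claims in turn, deriving the second from the first together with Proposition \ref{rev_LERW}. Throughout, I identify a boundary edge $(u,v)\in\partial_{in}V_\delta(D)$ with its interior vertex $u$ and $(u,v)\in\partial_{out}V_\delta(D)$ with its interior vertex $v$, as in Section~4.1, so that $\pi(a_\delta)$ and $\pi(b_\delta)$ make sense.

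For the first claim, I would argue by direct path computation. Fix a finite path $\omega=(\omega_0,\omega_1,\dots,\omega_n)$ with $\omega_0=a_\delta$, $\omega_n=b_\delta$, and $\omega_i\in V_\delta(D)$ for $0<i<n$. The (unconditioned) probability that $S_\delta^{a_\delta}$ traces $\omega$ and exits $D$ at $b_\delta$ is $\prod_{i=0}^{n-1}p(\omega_i,\omega_{i+1})$, so
\[
\P\bigl(\Gamma_\delta^{a,b}=\omega\bigr)=\frac{\prod_{i=0}^{n-1}p(\omega_i,\omega_{i+1})}{\P\bigl(S_\delta^{a_\delta}(\tau_D)=b_\delta\bigr)}.
\]
For the dual walk, I compute the probability that $(S^*)_\delta^{b_\delta}$ traces $\omega^-=(\omega_n,\dots,\omega_0)$. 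Using $p^*(u,v)=\pi(v)p(v,u)/\pi(u)$ and telescoping,
\[
\prod_{i=0}^{n-1}p^*(\omega_{i+1},\omega_i)=\Bigl(\prod_{i=0}^{n-1}\tfrac{\pi(\omega_i)}{\pi(\omega_{i+1})}\Bigr)\prod_{i=0}^{n-1}p(\omega_i,\omega_{i+1})=\frac{\pi(a_\delta)}{\pi(b_\delta)}\prod_{i=0}^{n-1}p(\omega_i,\omega_{i+1}).
\]
Since time reversal is a bijection between paths in $D$ from $a_\delta$ to $b_\delta$ and paths in $D$ from $b_\delta$ to $a_\delta$, summing this identity over all such $\omega$ yields $\P((S^*)_\delta^{b_\delta}(\tau_D)=a_\delta)=\tfrac{\pi(a_\delta)}{\pi(b_\delta)}\P(S_\delta^{a_\delta}(\tau_D)=b_\delta)$. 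The factor $\pi(a_\delta)/\pi(b_\delta)$ therefore cancels upon normalization, giving $\P((\Gamma^*)_\delta^{b,a}=\omega^-)=\P(\Gamma_\delta^{a,b}=\omega)$, which is the first statement.

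For the second claim, write $\Gamma=\Gamma_\delta^{a,b}$ so that $(\gamma_\delta^{a,b})^-=L[\Gamma]^-$. The identity in Proposition \ref{rev_LERW} holds for every admissible $\omega$; since both events $\{L[(S(0),\dots,S(\tau_D))^-]=\omega\}$ and $\{L[(S(0),\dots,S(\tau_D))]^-=\omega\}$ force $S(0)$ and $S(\tau_D)$ to equal the endpoints of $\omega$, summing over $\omega$ with prescribed endpoints shows the identity persists after conditioning on $\{S(0)=a_\delta,\,S(\tau_D)=b_\delta\}$. Consequently $L[\Gamma]^-\stackrel{d}{=}L[\Gamma^-]$. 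Combining with the first statement, $\Gamma^-\stackrel{d}{=}(\Gamma^*)_\delta^{b,a}$, hence
\[
(\gamma_\delta^{a,b})^-=L[\Gamma]^-\stackrel{d}{=}L[\Gamma^-]\stackrel{d}{=}L\bigl[(\Gamma^*)_\delta^{b,a}\bigr]=(\gamma^*)_\delta^{b,a},
\]
which is the second statement.

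The main obstacle I anticipate is not the telescoping itself but the boundary bookkeeping: one must justify that $\pi$ is well-defined at $a_\delta$ and $b_\delta$ under the convention identifying a boundary edge with its interior vertex, and that under this convention the set of admissible $\Gamma_\delta^{a,b}$-paths is in bijection (via reversal) with the set of admissible $(\Gamma^*)_\delta^{b,a}$-paths — in particular that $a_\delta\in\partial_{in}V_\delta(D)$ correctly plays the role of an exit edge for the dual walk and $b_\delta\in\partial_{out}V_\delta(D)$ plays the role of an entrance edge. Once this compatibility is checked, the computation above goes through and the two claims follow with no further analytic input.
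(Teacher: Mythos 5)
Your proof is correct and follows essentially the same route as the paper: the paper simply asserts the first claim as ``immediate from the definition of the dual walk and the conditional probability'' and then writes down the same four-term chain $(\gamma_\delta^{a,b})^- = L[\Gamma_\delta^{a,b}]^- \stackrel{d}{=} L[(\Gamma_\delta^{a,b})^-] \stackrel{d}{=} L[(\Gamma^*)_\delta^{b,a}] = (\gamma^*)_\delta^{b,a}$ that you derive. What you add is the explicit telescoping of the $\pi$-factors (showing they cancel between numerator and normalizer) and the observation that Proposition \ref{rev_LERW} survives conditioning on the exit vertex because both events fix the endpoints of $\omega$ — details the paper leaves implicit, and which you handle correctly.
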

\begin{proof}
The first assertion immediately follows from the definition of the dual walk and the conditional probability.
In addition to the first assertion,  applying Proposition \ref{rev_LERW},
\[ (\gamma _\delta^{a,b})^-=L[\Gamma _\delta^{a,b}]^-\stackrel{d}{=}L[(\Gamma _\delta^{a,b})^-]
\stackrel{d}{=}L[(\Gamma^*)_\delta^{b,a}]=(\gamma^*)_\delta^{b,a},\]
where $\stackrel{d}{=}$ means the same distribution.
Hence, we get the second assertion.
\end{proof}
Let $\eta^{a,b}$ be a chordal SLE$_2$ curve  in $D$ from $a$ to $b$.
Recall the metric $d_\U$ defined by (\ref{d_u}) in Subsection 2.4.
\begin{theo}\label{main2}
Suppose that there exists an invariant measure $\pi$ for a natural random walk $S(\cdot)$ on $G$
such that $0<\pi (v) < \infty$ for any $v \in V$ and 
$S_\delta ^x$ and $(S^*)_\delta ^x$ satisfy invariance principle.
Let  $D$ be a bounded simply connected domain and  $a,b \in \partial D$ be distinct points.
Suppose that $\partial D$ is locally analytic at $a$ and $b$.
Then, $\gamma _\delta ^{a, b}$ converges weakly to $\eta ^{a,b}$  
as $\delta \rightarrow 0$
with respect to the metric $d_\U$.
\end{theo}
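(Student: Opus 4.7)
The plan is to deduce Theorem \ref{main2} from Theorem \ref{lem5}, which upgrades convergence with respect to the driving function to convergence with respect to $d_\U$. Along any sequence $\delta_j \downarrow 0$ set $\gamma^j = \gamma_{\delta_j}^{a,b}$. To invoke Theorem \ref{lem5} we must verify the two driving-function convergences it requires: both $\phi \circ \gamma^j$ and $\phi^- \circ (\gamma^j)^-$ converge weakly to a chordal SLE$_2$ curve in $\H$ with respect to the driving function.

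The second convergence is an immediate application of Theorem \ref{main1} with the roles of $a$ and $b$ exchanged. Since $\phi^-(b)=0$ and $\phi^-(a)=\infty$, and the hypotheses of Theorem \ref{main1} on $D$, on the endpoints (local analyticity of $\partial D$), and on the walk $S_\delta^x$ are symmetric in $a$ and $b$, we obtain that $\phi^- \circ (\gamma_\delta^{a,b})^-$ converges weakly to a chordal SLE$_2$ curve in $\H$ with respect to the driving function.

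The first convergence is obtained by passing to the dual walk. Proposition \ref{dual} identifies $\gamma_\delta^{a,b}$ in distribution with $((\gamma^*)_\delta^{b,a})^-$, where the star denotes the LERW associated with the dual walk $(S^*)_\delta^x$ on the dual planar irreducible graph. Since invariance principle is assumed for $(S^*)_\delta^x$, Theorem \ref{main1} applied on the dual graph (with the same conformal map $\phi$) shows that $\phi \circ ((\gamma^*)_\delta^{b,a})^-$ converges weakly to chordal SLE$_2$ in $\H$ with respect to the driving function; by Proposition \ref{dual} this convergence transfers verbatim to $\phi \circ \gamma_\delta^{a,b}$.

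With both hypotheses of Theorem \ref{lem5} verified, we conclude weak convergence of $\gamma_\delta^{a,b}$ to $\eta^{a,b}$ with respect to $d_\U$ (the internal consistency of the two SLE$_2$ limits being guaranteed by the reversibility of chordal SLE$_2$, Theorem \ref{rev_sle}). There is no real obstacle: all the serious analytic work has already been done in Theorem \ref{main1} and in Proposition \ref{rev_LERW} (the backbone of Proposition \ref{dual}). The only mild bookkeeping is to verify that Theorem \ref{main1} applies on the dual graph $G^*$, i.e.\ that $G^*$ is itself planar and irreducible and that boundary approximants $a_\delta, b_\delta$ can be chosen consistently on both graphs; these are routine consequences of the planar irreducible structure of $G$ and of the local analyticity of $\partial D$ at $a$ and $b$.
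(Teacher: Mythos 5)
Your proof is correct and follows essentially the same route as the paper: apply Theorem \ref{main1} directly (with $a$ and $b$ swapped) to obtain convergence of $\phi^-\circ(\gamma_\delta^{a,b})^-$, apply Theorem \ref{main1} to the dual walk together with Proposition \ref{dual} to obtain convergence of $\phi\circ\gamma_\delta^{a,b}$, and then invoke Theorem \ref{lem5}. The additional remarks about reversibility of SLE$_2$ and the planar irreducible structure of the dual graph are correct asides not explicitly spelled out in the paper but do not change the argument.
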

\begin{proof}
Let $\phi : D \rightarrow \H$ be a conformal map with $\phi (a) =0, \phi (b)=\infty$
and Let $\phi ^- : D \rightarrow \H$ be a conformal map with $\phi (b) =0, \phi (a)=\infty$.
Theorem \ref{main1} implies that  $\phi ^- \circ (\gamma_\delta ^{a,b})^-$
converges weakly to a chordal SLE$_2$ with respect to the driving function.
Because we also assume that $(S^*)_\delta ^x$ satisfy invariance principle,
Theorem \ref{main1} implies that  $\phi  \circ ((\gamma^*)_\delta^{b,a})^-$
converges weakly to a chordal SLE$_2$ with respect to the driving function.
By Proposition \ref{dual}, $\gamma _\delta ^{a, b}$ is the same distribution as $((\gamma^*)_\delta^{b,a})^-$.
Hence, $\phi  \circ \gamma_\delta ^{a,b}$ converges weakly to a chordal SLE$_2$ with respect to the driving function.
Therefore, Theorem \ref{lem5} completes the proof.
\end{proof}

\section{Estimates  of hitting probabilities of the random walk started at a boundary point }

In this section we prove Lemma \ref{est_hit}. To this end it is convenient to work in the disc  $\D$ instead of  $\H$.
Let $D \subsetneq \C$ be a simply connected domain and $a,b$ be distinct points on $\partial D$.
Let $\phi : D \rightarrow \H$ be a conformal map with $\phi (a)=0, \phi (b)=\infty$.
Let $p := \phi ^{-1} (i)$. Put $\Psi(z)= (z-1)/(z+1)$ and
 $\psi = \Psi\circ \phi$ so that $\psi$   is a conformal map of $D$ onto  $\D$  with $\psi(a)= -1, \psi (b)=1, \psi (p)=0$. 
Let $S_\delta ^b $ be a natural random walk on $G_\delta$ started at $b_\delta$,
where $b_\delta$ is a point of $\partial _{in} V_\delta (D)$ close to $b$.

 Recall the class $\mathcal{D}(r,R, \eta_0)$, which  is the collection of all quadruplets $(D,a,b,p)$ such that 
$\rad _p(D) \geq r $ and $D \subset R\D$ and $\psi ^{-1}$ has analytic extension in $\{ z \in \C : |z-1|<\eta_0 \}$.
Throughout this section we consider the constants $r$, $R$ and $\eta_0$ to be fixed  
and write $\mathcal{D}$ for $\mathcal{D}(r,R, \eta_0)$; also suppose that $S^x_\delta$ satisfies invariance principle.

 For $(D, a, b,p)\in \mathcal{D}$  and  $\eta< \eta_0\wedge \frac12$  put
\[ U= U_\eta= \{ z \in D : | \psi (z) - 1| <\eta \} \]
and for any number $\alpha $ from the open interval $(0,1/2)$,

\[J_\alpha= \{ z \in \partial U : \dist (\psi (z), \partial \D) < \alpha \eta , z \in D \} .\]

\begin{prop} \label{est1}
Let $U=U_\eta$ and $J_\a$ be as described above.
Then for any  $\e>0$ there exists $\de_0= \de(\e,\eta)>0$ such that for all positive  $\de<\de_0, \a<\de_0$ 
and for all $(D,a,b,p)\in \mathcal D$,
\[ \P (S_\delta ^b (\tau _U) \in J_ \alpha \ |\  S_\delta ^b (\tau _U) \in D) < \epsilon  , \]
 Here $\delta_0$  may depend on the graph $(V,E)$.
\end{prop}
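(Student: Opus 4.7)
The plan is to reduce the problem, via a conformal change of coordinates near $b$, to an explicit Brownian motion computation in a half-disc, and then transfer back to $S^b_\delta$ using Corollary \ref{est_mart1} together with the strong Markov property at a well-chosen intermediate stopping time. For Brownian motion the conditional probability will be of order $\alpha^2$ (given by the boundary Poisson excursion kernel from $b$); the main subtlety is that $S^b_\delta$ starts at the boundary vertex $b_\delta$, so a two-scale argument is needed before any form of invariance principle can be invoked.

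Using the analyticity of $\psi^{-1}$ on the fixed disc $\{|w-1|<\eta_0\}$ (uniform over $\mathcal D$), I would compose with $T(w):=i(1-w)/(1+w)$ to obtain a biholomorphism $\Phi:=T\circ\psi$ from $U_\eta\cup\{b\}$ onto a domain close, for $\eta$ small, to the standard half-disc $H_{\eta/2}:=\{z\in\H:|z|<\eta/2\}$, with $\Phi(b)=0$. Under $\Phi$, the interior arc $A_\eta:=\partial U\cap D$ is close to the semicircle $\{|z|=\eta/2,\Im z>0\}$ and $J_\alpha$ corresponds to the two subarcs within distance of order $\alpha\eta$ of the endpoints $\pm\eta/2$. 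The desired Brownian estimate is then: for BM in $H_{\eta/2}$ started at $z_0$ with $|z_0|$ small, the conditional exit distribution on the semicircle (given exit there) puts mass at most $O(\alpha^2)$ on the $\alpha$-neighborhoods of the endpoints. This is a direct Poisson-kernel calculation via the conformal map $z\mapsto((1+2z/\eta)/(1-2z/\eta))^2$ sending $H_{\eta/2}$ onto $\H$ and $0$ to $1$, with the semicircle mapping to $(-\infty,0)$ and $z_0$ to some point $w_0=1+O(|z_0|/\eta)\,i$; the half-plane Poisson kernel at $w_0$ gives total mass of order $|z_0|/\eta$ on $(-\infty,0)$ and mass of order $|z_0|\alpha^2/\eta$ on the pre-images of the $\alpha$-neighborhoods, yielding the ratio $O(\alpha^2)$ uniformly for $|z_0|/\eta$ small.

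To transfer this estimate to $S^b_\delta$, choose an intermediate scale $\rho$ with $\delta\ll\rho\ll\eta$, let $V_\rho:=\{z\in D:|\psi(z)-1|<\rho\}$, and let $\sigma$ be the first exit time of $S^b_\delta$ from $V_\rho$. Since the walk cannot exit $U$ through $A_\eta$ without first passing through $V_\delta(D)\cap\partial V_\rho$, the strong Markov property gives $\P(S^b_\delta(\tau_U)\in J_\alpha)=\sum_{y}\P(S^b_\delta(\sigma)=y)\,\P(S^y_\delta(\tau_U)\in J_\alpha)$ and similarly for $A_\eta$, with $y$ ranging over $V_\delta(D)\cap\partial V_\rho$. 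For each such $y$ the point $\Phi(y)$ lies near the boundary of $\Phi(V_\rho)$, with $|\Phi(y)|$ of order $\rho/2$ and macroscopic distance from $A_\eta$, so applying Corollary \ref{est_mart1} with reference point in $U$ (at conformal depth of order $\rho$ and scale of order $\eta$) shows that the ratio $\P(S^y_\delta(\tau_U)\in J_\alpha)/\P(S^y_\delta(\tau_U)\in A_\eta)$ is close to the corresponding Brownian ratio from $\Phi(y)$, which is $O(\alpha^2)$ by the previous paragraph. Averaging with weights $\P(S^b_\delta(\sigma)=y)$, the same $O(\alpha^2)$ bound transfers to the conditional probability in the proposition.

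The main obstacle is the boundary-start transfer: Definition \ref{inv_pr} and Corollary \ref{est_mart1} both require interior starting points at a fixed conformal depth, which $b_\delta$ is not; the two-scale argument bypasses this by working at an intermediate conformal depth $\rho$ (chosen first, independently of $\delta$) where the invariance principle becomes applicable, while the structure of the pointwise bound $O(\alpha^2)$ being uniform over admissible $y$ obviates the need to know the precise distribution of $S^b_\delta(\sigma)$ on $\partial V_\rho\cap D$. Uniformity over $\mathcal D(r,R,\eta_0)$ follows from the uniformity of $\psi^{-1}$ on $\{|w-1|<\eta_0\}$ and the uniform estimates in Corollary \ref{est_mart1}.
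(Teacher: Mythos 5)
Your Brownian computation in the half--disc is fine, and you have correctly located the core difficulty: $b_\delta$ is a boundary vertex, so neither Definition~\ref{inv_pr} nor Corollary~\ref{est_mart1} applies to the walk started there. But the two--scale first--exit decomposition does not actually get around this. When you stop $S^b_\delta$ at $\sigma=\tau_{V_\rho}$ and sum over exit vertices $y\in V_\delta(D)\cap\partial V_\rho$, the $y$'s near the two ``corners'' where $\partial V_\rho$ meets $\partial D$ are within $O(\delta)$ of $\partial D$, so their conformal depth is of order $\delta$, not of order $\rho$. Corollary~\ref{est_mart1} needs $\Im\phi(y)$ bounded below by a fixed constant, so it gives no control for these $y$; the bound is \emph{not} ``uniform over admissible $y$'' as you assert. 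To finish, you would have to show that $\sum_{\text{bad }y}\P(S^b_\delta(\sigma)=y)\,\P(S^y_\delta(\tau_U)\in D)$ is negligible compared with $\P(S^b_\delta(\tau_U)\in D)$, which is exactly the statement of the proposition at scale $\rho$ in place of $\eta$; the reduction is circular.

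The paper closes this gap by a genuinely different device, and it is precisely here (as the remark after the proposition emphasizes) that local analyticity of $\partial D$ at $b$ is used essentially rather than cosmetically. One attaches a flap $B=\{\,|\psi(z)-1|<\eta/3\,\}\setminus\overline U$ lying \emph{outside} $D$ across the boundary arc $C=\{z\in\partial D:\Im\psi(z)>0,\ |\psi(z)-1|<\eta/3\}$, forming the enlarged domain $\Omega=B\cup C\cup U$; the random walk on the planar graph can of course step across $\partial D$, and analyticity is what makes $\psi$ extend conformally over $C$. A vertex $v^*\in C_\delta$ with $\Im\psi(v^*)\approx\eta/6$ then sits at macroscopic conformal depth \emph{inside $\Omega$} even though it lies within $O(\delta)$ of $\partial D$, so the invariance principle applies to $S^{v^*}_\delta$ in $\Omega$. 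A last--exit decomposition of $S^{v^*}_\delta$ from $B$, together with the planarity inequality $p(y)\geq p(b)$ for $y\in C_\delta$ (every conditioned path from $y$ to $\partial U\cap D$ must first hit a fixed path $\gamma^b$ joining $b_\delta$ to $J_\alpha^+$), yields $q:=\P(S^{v^*}_\delta(\tau_\Omega)\in J_\alpha^+)\geq p(b)\,\P(S^{v^*}_\delta(\tau_\Omega)\in D)$, and both $q=O(\e)$ and $\P(S^{v^*}_\delta(\tau_\Omega)\in D)>1/3$ are now honest interior--point estimates. In your proposal analyticity only straightens $U_\eta$ into a near half--disc and the walk never leaves $D$; that is the sign that the boundary--start difficulty has not really been engaged.
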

 \n
{\sc Remark.}~  It is only for this proposition  that we need the condition of the analyticity about $b$.
Without that condition the estimate of the proposition  is obtained by Uchiyama \cite{U}. 
  
\begin{proof}
 This proof is an adaptation of  a part of the arguments given in \cite{U}. 
Put
\[
 C=\{z \in \partial D : \Im\, \psi (z) >0 , |\psi (z) -1| <\eta/3\} ,
 \]
 and
 \[ 
 B= \{ z \in \C : |\psi (z) -1|<\eta /3 \} \setminus \overline{U}, \quad \Omega=B \cup C \cup U.
 \]
Let
\[
C_\delta = \{ v \in V_\delta (D) : [u,v] \cap C \not = \emptyset \text{ for some } u \in V_\delta (B)\},
\]
and $v^*$ be a vertex in $C_\delta$ such that $\Im\, \psi (v^*)$ is closest to $\eta/6$ among vertexes of $C_\delta$.   

Let $L$ denote  the last time when  the  walk  $S_\delta^{v^*}$ in $\Om$ killed when it crosses the boundary $\partial \Om$ exits $B$: 
\[
L=
\begin{cases}
1+ \max \{ 0 \leq n <\tau _\Omega : S_\delta ^{v^*}(n) \in B \}\quad 
\text{if} \quad S_\delta ^{v^*}(\tau _\Omega )  \not \in \partial B\\
\infty \quad \text {if} \quad S_\delta ^{v^*} (\tau _\Omega ) \in \partial B
\end{cases}.
\]
We write $T=\tau _U$.
Putting $J_\alpha^+=J_\alpha \cap \H$ we compute $q=\P( S_\delta ^{v^*}(\tau_\Omega) \in J_\alpha ^+)$, 
the probability that the walk  exits  $\Omega$  through $J_\alpha ^+$, which we rewrite as
\[
q=\P (S_\delta ^{v^*} (T) \circ \theta _L \in J_\alpha ^+, L < \tau _\Omega ),
\]
where the shift operator $\theta _L$ acts on $T$ as well as on $S_\delta ^{v^*}$.
By employing the strong Markov property
\begin{align*}
q&= \sum _{n=0}^\infty \sum_{y\in C_\delta} 
\P (S_\delta ^{v^*} (T) \circ \theta _n \in J_\alpha ^+, L =n, S_\delta ^{v^*}(n)=y)\\
&=\sum _{n=0}^\infty \sum_{y\in C_\delta} 
\P (S_\delta ^{v^*} (T) \circ \theta _n \in J_\alpha ^+, S_\delta ^{v^*}(n)=y)\\
&=\sum _{n=0}^\infty \sum_{y\in C_\delta} 
\P ( S_\delta ^{v^*}(n)=y) \P ( S_\delta ^y (T) \in J_\alpha ^+ )
\end{align*}

The occurrence of  the event $S_\delta ^{y}(T) \in J_\alpha ^+$ for $y\in C_\delta$  entails $S_\delta ^{y} (T)  \in D$,
so that 
$\P ( S_\delta ^{y} (T) \in J_\alpha ^+ ) = \P ( S_\delta ^{y} (T) \in J_\alpha ^+ ,  S_\delta ^{y} (T)  \in D )$.
Hence, bringing in the conditional probability
\[
p(y)= \P ( S_\delta ^{y} (T) \in J_\alpha ^+ \ |\  S_\delta ^{y} (T)  \in D ),
\]
we infer that
\[
q =  \sum _{y\in C_\delta} G_{\Omega}(v^*,y) \P (S_\delta ^y (T) \in D) p(y),
\]
where $G_\Omega$ stands for the Green function of the walk killed on exiting  $\Omega$.
We have
\[ p(y)\geq p(b), \quad y\in C_\delta, \]
for, if $\ga^{b}$ denote a path joining $b_\delta$ with $J_\alpha ^+$ in $V_\delta (U)$, 
then the walk starting at $y\in C_\delta$ and conditioned  on the event $S_\delta^y (T) \in D$ 
must hit $\gamma ^b \cup J_\alpha ^+$ before existing $U$. 
Observing  the identity
\[
 \sum_{y\in C_\delta } G_{\Omega}(v^*,y) \P (S_\delta ^y (T) \in  D)= \P (S_\delta^{ v^*}(\tau _\Omega ) \in D),
 \]
we finally obtain
\[
q \geq p(b) \P (S_\delta^{ v^*}(\tau _\Omega ) \in D).
\]
This concludes $p(b)< \epsilon /2 $ 
since $\P (S_\delta^{ v^*}(\tau _\Omega ) \in D) > 1/3$ and $ q < \epsilon /6$
for all sufficiently small $\delta$ and $\alpha$.
Let $J_\alpha ^-=J_\alpha \setminus J_\alpha ^+$.
On defining $C$ with $\Im\, \psi (z) \leq 0$ in place of $\Im\, \psi (z) > 0$ we repeat the same argument
to show that $\P ( S_\delta ^{b} (T) \in J_\alpha ^- \ |\  S_\delta ^{b} (T)  \in D )<\epsilon/2$.
\end{proof}

\begin{lem}\label{est2}  
Let $A:=\phi ^{-1}([-1,1])$.
For any $\epsilon >0$, there exists $\delta _0=\de_0(\epsilon , \eta) >0$ such that the following holds.
Let $(D,a,b,p) \in \mathcal{D}$.
Then, for all $0<\delta < \delta _0 $ and $0< \alpha < \delta _0$,
\[ \P (S_\delta ^b (\tau _U) \in J_ \alpha \ |\  S_\delta ^b (\tau _D) \in A) < \epsilon. \]
\end{lem}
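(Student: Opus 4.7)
The plan is to derive the lemma from Proposition~\ref{est1} via the strong Markov property at $\tau_U$. For $\eta < \eta_0 \wedge \tfrac12$ one has $\bar U \cap A = \emptyset$, since $\psi(A) = \Psi([-1,1])$ is a fixed arc of $\partial\D$ separated from $1 = \psi(b)$ by Euclidean distance $\sqrt{2}$, while $\psi(\bar U) = \overline{\{|w-1|<\eta\}} \cap \bar\D$; hence any walk from $b_\delta$ that exits $D$ through $A$ must first cross $\partial U \cap D$. With $h(y) := \P(S^y_\delta(\tau_D) \in A)$, strong Markov at $\tau_U$ yields the identities
\[
\P(S^b_\delta(\tau_D) \in A) = \sum_{y} \P(S^b_\delta(\tau_U) = y)\, h(y),
\]
\[
\P(S^b_\delta(\tau_U) \in J_\alpha,\, S^b_\delta(\tau_D) \in A) = \sum_{y \in J_\alpha} \P(S^b_\delta(\tau_U) = y)\, h(y),
\]
where the first sum is over vertices $y \in V_\delta(D)$ (equivalently, over the event $\tau_U < \tau_D$).

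Next, I would upper-bound the numerator by discarding $h(y) \le 1$ and invoking Proposition~\ref{est1} with parameter $\alpha$: for any prescribed $\epsilon' > 0$, by choosing $\delta, \alpha$ small enough,
\[
\P(S^b_\delta(\tau_U) \in J_\alpha,\, S^b_\delta(\tau_D) \in A) \leq \P(S^b_\delta(\tau_U) \in J_\alpha) \leq \epsilon' \, \P(S^b_\delta(\tau_U) \in D).
\]
For the denominator, I would fix an auxiliary constant $\alpha_* > 0$ depending only on $\eta$ (and not on $\alpha$ or $\delta$), and retain in the sum only $y \notin J_{\alpha_*}$. For such $y$, $\psi(y)$ lies on the circle $|w-1|=\eta$ at distance $\ge \alpha_* \eta$ from $\partial\D$, so it ranges over a compact subset of $\D$. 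By conformal invariance of harmonic measure, $\omega_D(y,A) = \omega_\D(\psi(y), \psi(A))$, and since $\psi(A)$ is a fixed macroscopic arc of $\partial\D$ while $\psi(y)$ stays away from $\partial\D$, this is bounded below by some $2c_0 = 2c_0(\eta, \alpha_*) > 0$. By the invariance principle, $h(y)$ approximates $\omega_D(y, A)$ uniformly over such $y$, so $h(y) \ge c_0$ for $\delta$ small. Combining this with Proposition~\ref{est1} applied at level $\alpha_*$ with, say, $\epsilon_* = \tfrac12$, gives
\[
\P(S^b_\delta(\tau_D) \in A) \ge c_0\, \P(S^b_\delta(\tau_U) \in D \setminus J_{\alpha_*}) \ge \tfrac{c_0}{2}\, \P(S^b_\delta(\tau_U) \in D).
\]

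Dividing the two bounds produces $\P(S^b_\delta(\tau_U) \in J_\alpha \mid S^b_\delta(\tau_D) \in A) \le 2\epsilon'/c_0$, and choosing $\epsilon' < c_0 \epsilon / 2$ completes the proof. The main obstacle is securing the uniform lower bound $h(y) \ge c_0$ across the class $\mathcal D$. This rests on two ingredients: first, uniform geometric control — via the analytic extension of $\psi^{-1}$ across $\{|z-1|<\eta_0\}$ together with the Koebe distortion theorem — placing $y \in \partial U \setminus J_{\alpha_*}$ at a uniformly macroscopic Euclidean distance from $\partial D$; and second, uniformity of the invariance-principle approximation of $h(y)$ by $\omega_D(y, A)$, which can be obtained either by summing Proposition~\ref{est_mart} over a suitable discretization of $A$ after transferring to $\D$ via the inclusion $D \subset R\D$, or by rerunning the coupling argument of Definition~\ref{inv_pr} in the general subdomain setting.
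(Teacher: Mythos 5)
Your argument is correct and follows the same route as the paper: decompose $\P(S^b_\delta(\tau_D)\in A)$ via the strong Markov property at $\tau_U$, control the ``re-entry'' hitting probabilities $h(y)=\P(S^y_\delta(\tau_D)\in A)$ using the invariance principle together with conformal invariance, and then invoke Proposition~\ref{est1} to control where the walk crosses $\partial U$. The one organizational difference: the paper works with the ratio
$\sup_{y\in J_\alpha}h(y)\big/\inf_{y\notin J_\alpha}h(y)\leq C$
and applies Proposition~\ref{est1} once at level $\alpha$, whereas you bound the numerator trivially by $h(y)\leq 1$, lower-bound the denominator using a fixed auxiliary cutoff $\alpha_*$ (so the infimum of $h$ is over $\partial U\setminus J_{\alpha_*}$, a set at uniformly macroscopic distance from $\partial D$ after conformal mapping), and apply Proposition~\ref{est1} twice (at $\alpha$ and at $\alpha_*$). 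This is a mild rearrangement, but it has the merit of making the uniformity transparent: by freezing $\alpha_*$, the lower bound $h(y)\geq c_0$ is over a genuinely compact family, sidestepping the question of whether $\inf_{y\notin J_\alpha}h(y)$ stays comparable to $\sup_{y\in J_\alpha}h(y)$ as $\alpha\downarrow 0$ (a point the paper passes over quickly). Your closing remarks about uniformity across $\mathcal D$ --- via the analytic extension of $\psi^{-1}$ and the need to get the invariance-principle approximation of $h(y)$ by $\omega_D(y,A)$ uniformly --- correctly identify the place where care is needed and where the paper is terse.
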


\begin{proof}
By the definition of the conditional probability and the strong Markov property, 
\begin{align*}
\frac{ \P (S_\delta ^b (\tau _U) \in J_ \alpha \ |\  S_\delta ^b (\tau _D) \in A)}
{ \P (S_\delta ^b (\tau _U) \not\in J_ \alpha \ |\  S_\delta ^b (\tau _D) \in A)}
&=\frac{ \P (S_\delta ^b (\tau _U) \in J_ \alpha , S_\delta ^b (\tau _D) \in A)}
{ \P (S_\delta ^b (\tau _U) \not\in J_ \alpha , S_\delta ^b (\tau _D) \in A)}\\
&=\frac{\sum _{y \in J_\alpha}\P ( S^b_\delta(\tau _U)=y) \P(S^y_\delta (\tau _D) \in A)}
{\sum _{y \not \in J_\alpha}\P ( S^b_\delta(\tau _U)=y) \P(S^y_\delta (\tau _D) \in A)}.
\end{align*}
Because we assume invariance principle,  the  hitting probability $\P(S^y_\delta (\tau _D) \in A)$ 
can be approximated by the same probability for a Brownian motion.
Because the hitting probability for a Brownian motion is conformal invariant, we can calculate 
the hitting probability on the upper half plane instead of $D$.
Therefore, we find that there exists a universal constant $C$ such that for sufficiently small $\delta$,
\[
\frac{\sup _{y \in J_\alpha}\P(S^y_\delta (\tau _D) \in A)}
{\inf _{y \not\in J_\alpha}\P(S^y_\delta (\tau _D) \in A)}
\leq C.\]
Thus, we obtain
\[\frac{ \P (S_\delta ^b (\tau _U) \in J_ \alpha \ |\  S_\delta ^b (\tau _D) \in A)}
{ \P (S_\delta ^b (\tau _U) \not\in J_ \alpha \ |\  S_\delta ^b (\tau _D) \in A)}
\leq 
C\frac{\sum _{y \in J_\alpha} \P ( S^b(\tau _U)=y)}{\sum _{y \not \in J_\alpha} \P ( S^b(\tau _U)=y)}.
\]
Because
\[\frac{\sum _{y \in J_\alpha} \P ( S^b(\tau _U)=y)}{\sum _{y \not \in J_\alpha} \P ( S^b(\tau _U)=y)}
=\frac{\P (S^b_\delta (\tau _U) \in J_\alpha \ |\  S^b _\delta (\tau _U) \in D)}
{\P (S^b_\delta (\tau _U) \not\in J_\alpha \ |\  S^b _\delta (\tau _U) \in D)},\]
Proposition \ref{est1} completes the proof.
\end{proof}

\begin{lem}\label{est3}
For any $\epsilon >0$, there exists $\delta _0=\delta _0 (\epsilon, \eta ) >0$ such that the following holds.
Let $(D,a,b,p) \in \mathcal{D}$.
Then, for all $0<\delta < \delta _0 $ and $0< \alpha < \delta _0$ ,
\[ \P (S_\delta ^b (\tau _U) \in J_ \alpha \ |\  S_\delta ^b (\tau _D) =a_\delta ) < \epsilon, \]
where $a_\delta$ is a point of $ \partial _{out} V_\delta (D)$  close to $a$.
\end{lem}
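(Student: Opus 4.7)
The plan is to adapt the proof of Lemma \ref{est2}, with the invariance-principle argument that there controlled $\P(S^y_\delta(\tau_D) \in A)$ replaced by Proposition \ref{est_mart} applied to the single-vertex hitting probability $H_0^{(\delta)}(y, a_\delta)$. First I would rewrite the conditional probability via the strong Markov property at $\tau_U$, observing that $a_\delta$ lies at positive Euclidean distance from $U$ so that any path realizing $\{S^b_\delta(\tau_D) = a_\delta\}$ must first exit $U$ into $D$:
\[
\frac{\P(\tau_U \in J_\alpha \,|\, S^b_\delta(\tau_D) = a_\delta)}{\P(\tau_U \notin J_\alpha \,|\, S^b_\delta(\tau_D) = a_\delta)} = \frac{\sum_{y \in J_\alpha} \P(S^b_\delta(\tau_U) = y)\, H_0^{(\delta)}(y, a_\delta)}{\sum_{y \in (\partial U \setminus J_\alpha) \cap D} \P(S^b_\delta(\tau_U) = y)\, H_0^{(\delta)}(y, a_\delta)}.
\]
In view of Proposition \ref{est1}, to obtain the desired $\epsilon$-bound it suffices to establish a uniform comparison
\[
\sup_{y \in J_\alpha \cap V_\delta} H_0^{(\delta)}(y, a_\delta) \;\leq\; C \inf_{y \in (\partial U \setminus J_\alpha) \cap V_\delta \cap D} H_0^{(\delta)}(y, a_\delta)
\]
with $C$ depending only on $r, R, \eta_0, \eta$ and independent of $\delta$, $\alpha$, and $a_\delta$.

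For $y' \in \partial U \setminus J_\alpha$ one has $|\psi(y')| \leq 1 - \alpha\eta$, so Proposition \ref{est_mart} applies directly and yields $H_0^{(\delta)}(y', a_\delta) \approx (K_\D(\psi(y'), \psi(a_\delta))/K_\D(\psi(p), \psi(a_\delta)))\cdot H_0^{(\delta)}(p, a_\delta)$. Since $\psi(y')$ lies on the arc $|\psi - 1| = \eta$ at distance $\geq \alpha\eta$ from $\partial \D$ while $\psi(a_\delta)$ is close to $-1$, the ratio of Poisson kernels is of order $\alpha\eta$, making the right-hand side comparable to $\alpha\eta \cdot H_0^{(\delta)}(p, a_\delta)$ up to constants depending only on $\eta$. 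For $y \in J_\alpha$, Proposition \ref{est_mart} is not directly available; I would apply the strong Markov property at the first exit from a Euclidean ball $B(y, \rho)$ with $\rho$ of order $\alpha\eta$, chosen small enough that $a_\delta \notin B(y, \rho)$ (which is automatic since $y$ lies near $b$). Trajectories exiting $D$ inside $B(y, \rho)$ contribute zero, while for the remaining trajectories the exit vertex $z \in D \cap \partial B(y, \rho)$ sits at distance of order $\rho$ from $\partial D$, where Proposition \ref{est_mart} is applicable; combining this with the harmonicity of $K_\D(\psi(\cdot), \psi(a_\delta))$ in $D$ and the invariance principle gives the analogous bound
\[
H_0^{(\delta)}(y, a_\delta) \;\lesssim\; \frac{K_\D(\psi(y), \psi(a_\delta))}{K_\D(\psi(p), \psi(a_\delta))}\, H_0^{(\delta)}(p, a_\delta),
\]
which is again of order $\alpha\eta \cdot H_0^{(\delta)}(p, a_\delta)$ because $\dist(\psi(y), \partial \D) < \alpha\eta$.

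The main obstacle is establishing this last upper bound uniformly in the quadruplet $(D, a, b, p) \in \mathcal{D}$ and in $a_\delta$: one must choose $\rho$ and exploit local analyticity of $\partial D$ at $b$ (so that $\psi$ is bilipschitz on $B(y, \rho)$) in order to guarantee both that enough of $\partial B(y, \rho)$ lies in the interior of $D$ and that the Brownian/Poisson-kernel comparison for the exit distribution carries uniform constants. Once this is granted, the desired comparison $\sup \leq C \inf$ holds, and substituting into the ratio identity above together with Proposition \ref{est1} yields $\P(\tau_U \in J_\alpha \,|\, S^b_\delta(\tau_D) = a_\delta) < \epsilon$, as desired.
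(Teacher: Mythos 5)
There is a genuine gap, and it is centered on the use of Proposition~\ref{est_mart}. That proposition gives an \emph{additive} error bound: it asserts
\[
\left| \frac{H_0^{(\delta)}(y,a_\delta)}{H_0^{(\delta)}(p,a_\delta)} - \frac{K_\D(\psi(y),\psi(a_\delta))}{K_\D(\psi(p),\psi(a_\delta))} \right| < \epsilon',
\]
with $\delta_0$ depending on $(\epsilon', r, \eta')$, where $\eta'$ is the lower bound on $\dist(\psi(y),\partial\D)$. But the quantity you are trying to pin down for $y' \in \partial U \setminus J_\alpha$ near the inner edge of $J_\alpha$ is itself of order $\alpha\eta$ (and $K_\D(\psi(p),\psi(a_\delta))=K_\D(0,\cdot)$ is a fixed constant), so the Poisson-kernel ratio you are approximating is of size $\sim\alpha\eta$. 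The additive error $\epsilon'$ can only be made small after $\delta_0$ is chosen, and $\delta_0$ also depends on $\eta'=\alpha\eta$; since $\alpha$ ranges over all of $(0,\delta_0)$ in the statement of Lemma~\ref{est3}, you cannot fix $\delta_0$ in advance so that $\epsilon' \ll \alpha\eta$ uniformly. Concretely, Proposition~\ref{est_mart} is vacuous when the ratio being approximated is smaller than the allowed additive error, which is exactly the regime you are in. The ball-exit trick you propose for $y \in J_\alpha$ addresses the applicability of Proposition~\ref{est_mart}, but not this additive-versus-multiplicative mismatch, and so the claimed comparison $\sup_{J_\alpha} H_0^{(\delta)}(y,a_\delta) \leq C \inf_{\partial U\setminus J_\alpha} H_0^{(\delta)}(y,a_\delta)$ does not follow from the tools you invoke.

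The paper sidesteps this entirely by conditioning on the macroscopic event $\{S^b_\delta(\tau_D)\in A\}$ first: the walk conditioned to exit through $A$ is again Markovian, and the relevant quantity to compare across $\partial U$ becomes the \emph{conditional} hitting probability $\P(S^y_\delta(\tau_D)=a_\delta \mid S^y_\delta(\tau_D)\in A)$. This ratio does not degenerate as $y$ approaches $\partial D$ near $b$ --- the conditioning cancels the Poisson-kernel decay --- and Lemma~5.8 of \cite{YY} gives a uniform constant $C$ bounding the sup/inf ratio directly, without invoking Proposition~\ref{est_mart} at all. That reduces Lemma~\ref{est3} to Lemma~\ref{est2} in one step. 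If you want to salvage your route, you would need a boundary-Harnack-type estimate with \emph{multiplicative} control uniform in $\alpha$, which is essentially what Lemma~5.8 of \cite{YY} supplies under a different guise; Proposition~\ref{est_mart} on its own is not strong enough.
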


\begin{proof}
By the definition of the conditional probability, 
\begin{align*}
\frac{ \P (S_\delta ^b (\tau _U) \in J_ \alpha \ |\  S_\delta ^b (\tau _D) = a_\delta)}
{ \P (S_\delta ^b (\tau _U) \not\in J_ \alpha \ |\  S_\delta ^b (\tau _D)=a_\delta )}
&=\frac{ \P (S_\delta ^b (\tau _U) \in J_ \alpha , S_\delta ^b (\tau _D) = a_\delta)}
{ \P (S_\delta ^b (\tau _U) \not\in J_ \alpha , S_\delta ^b (\tau _D) =a_\delta)}\\
&=\frac{ \P (S_\delta ^b (\tau _U) \in J_ \alpha , S_\delta ^b (\tau _D) = a_\delta  \ |\  S^b_\delta (\tau _D) \in A)}
{ \P (S_\delta ^b (\tau _U) \not\in J_ \alpha , S_\delta ^b (\tau _D) =a_\delta  \ |\  S^b_\delta (\tau _D) \in A)}.
\end{align*}
Since the random walk conditioned on exiting $D$ through $A$ is Markovian,
the right-hand side above may be written as
\[
\frac{\sum _{y \in J_\alpha} \P(S^b_\delta (\tau _U)=y|S^b_\delta (\tau _D) \in A)
\P ( S^y_\delta (\tau _D)=a_\delta | S^y_\delta (\tau _D) \in A)}
{\sum _{y \not\in J_\alpha} \P(S^b_\delta (\tau _U)=y|S^b_\delta (\tau _D) \in A)
\P ( S^y_\delta (\tau _D)=a_\delta | S^y_\delta (\tau _D) \in A)}.
\]
By Lemma 5.8. in \cite{YY},
there exists a universal constant $C$ such that for sufficiently small $\delta$,
\[
\frac{\sup _{y \in J_\alpha } \P (S^y_\delta (\tau _D)=a_\delta \ |\  S^y_\delta (\tau _D) \in A)}
{\inf _{y \not\in J_\alpha } \P (S^y_\delta (\tau _D)=a_\delta \ |\  S^y_\delta (\tau _D) \in A)}
\leq C.
\]
Hence, we obtain
\[
\frac{ \P (S_\delta ^b (\tau _U) \in J_ \alpha \ |\  S_\delta ^b (\tau _D) = a_\delta)}
{ \P (S_\delta ^b (\tau _U) \not\in J_ \alpha \ |\  S_\delta ^b (\tau _D)=a_\delta )}
\leq C
\frac{ \P (S_\delta ^b (\tau _U) \in J_ \alpha \ |\  S_\delta ^b (\tau _D) \in A)}
{ \P (S_\delta ^b (\tau _U) \not\in J_ \alpha \ |\  S_\delta ^b (\tau _D) \in A )}
\]
Therefore, Lemma \ref{est2} completes the proof.
\end{proof}

\n
{\it Proof of Lemma \ref{est_hit}.} 
By the mapping $\Psi(z)= (z-i)/(z+i)$, the half disc $B_+ (2\lambda ):= B(U_0 , 2\lambda) \cap \H $ is 
mapped to a small disc of radius $\sim 1/2\lambda$ and centered at $1$.
For $1/2\lambda <\eta_0$,
(\ref{e1}) follows from applying  Lemma \ref{est2} with this small disc in place of $U_\eta$,
the little discrepancy between them  making  no harm. 
If $\diam (\phi (\gamma [0,j] ))<1$, 
the difference between  $B_+ (2\lambda )$ and $g_{t_j}(B_+(2\lambda))$ is insignificant for sufficiently large $\lambda$.
Hence, we also have (\ref{e2}) by  applying Lemma \ref{est3} with $(D_j, \gamma _j , b, p_j)$,
which is legitimate because of Lemma \ref{conf}.
\qed

\vskip4mm
{\bf Acknowledgments.}~ I would like  to express my sincere gratitude to
Professor K. Uchiyama: apart from   his many invaluable and helpful comments on this paper, I am especially  grateful  for his teaching me the essential idea for the  proof of Lemma  \ref{refpt} as well as for  showing his manuscript prior to its  publication and  letting me include a result therein as Proposition \ref{est1}.


\end{document}